\newtheorem{theorem}{Theorem}[section]
\newaliascnt{lem}{theorem}
\newtheorem{lemma}[lem]{Lemma}
\newaliascnt{ass}{theorem}
\newaliascnt{prop}{theorem}
\newtheorem{prop}[prop]{Proposition}
\newaliascnt{cor}{theorem}
\newtheorem{corollary}[cor]{Corollary}
\newaliascnt{defi}{theorem}
\newtheorem{defi}[defi]{Definition}
\theoremstyle{definition}
\newaliascnt{ex}{theorem}
\newtheorem{example}[ex]{Example}
\newaliascnt{rem}{theorem}
\newtheorem{remark}[rem]{Remark}
\renewcommand{\d}{\,\mathrm{d}}											
\renewcommand*{\epsilon}{\varepsilon}                                   
\renewcommand*{\rho}{\varrho}                                   		
\newcommand*{\nach}{\rightarrow}                                        
\newcommand*{\sep}{\; \vrule \;}                                        
\newcommand*{\N}{\mathbb{N}}                                            
\newcommand*{\R}{\mathbb{R}}                                            
\newcommand*{\C}{\mathbb{C}}                                            
\newcommand*{\Z}{\mathbb{Z}}                                            
\newcommand*{\B}{\mathcal{B}}                                           
\newcommand*{\M}{\mathcal{M}}                                         	
\newcommand*{\V}{\mathcal{V}}                                         	
\newcommand*{\T}{\mathcal{T}}                                         	
\newcommand*{\J}{\mathcal{J}}                                         	
\newcommand*{\leer}{\emptyset}                                          
\newcommand*{\abs}[1]{\left| #1 \right|}                                
\newcommand*{\norm}[1]{\left\| #1 \right\|}                             
\newcommand*{\ceil}[1]{\left\lceil #1 \right\rceil}                     
\newcommand*{\link}[1]{(\ref{#1})}                                      
\newcommand*{\distrmod}[2]{\left\langle\!\left\langle #1, #2 \right\rangle\!\right\rangle}
\newcommand*{\distr}[2]{\left\langle #1, #2 \right\rangle}              
\newcommand*{\dist}[2]{\mathrm{dist}\!\left( #1, #2 \right)}            
\renewcommand{\tilde}[1]{ \widetilde{#1} }        						
\DeclareMathOperator{\supp}{supp}										
\DeclareMathOperator{\spann}{span}										
\newcommand{\esssup}{ \mathop{\mathrm{esssup}}\limits }
\DeclareMathOperator{\diam}{diam}										
\DeclareMathOperator{\Id}{Id}											
\DeclareMathOperator{\ad}{ad}											
\setlist{itemsep=2pt, topsep=2pt}
\title{Almost diagonal matrices and\\Besov-type spaces based on wavelet expansions \footnote{This work has been supported by Deutsche Forschungsgemeinschaft DFG (DA 360/19-1).}}     
\author{Markus Weimar\thanks{Philipps-University Marburg, Faculty of Mathematics and Computer Science, Hans-Meerwein-Stra{\ss}e, Lahnberge, 35032 Marburg, Germany. Email: weimar@mathematik.uni-marburg.de}} 
\begin{document}   
\maketitle

\begin{abstract}
\noindent
This paper is concerned with problems in the context of the theoretical foundation of adaptive (wavelet) algorithms for the numerical treatment of operator equations. 
It is well-known that the analysis of such schemes naturally leads to function spaces of Besov type. But, especially when dealing with equations on non-smooth manifolds, the definition of these spaces is not straightforward. 
Nevertheless, motivated by applications, recently Besov-type spaces $B^\alpha_{\Psi,q}(L_p(\Gamma))$ on certain two-dimensional, patchwise smooth surfaces were defined and employed successfully.
In the present paper, we extend this definition (based on wavelet expansions) to a quite general class of $d$-dimensional manifolds and investigate some analytical properties (such as, e.g., embeddings and best $n$-term approximation rates) of the resulting quasi-Banach spaces.
In particular, we prove that different prominent constructions of biorthogonal wavelet systems $\Psi$ on domains or manifolds $\Gamma$ which admit a decomposition into smooth patches actually generate the same Besov-type function spaces $B^\alpha_{\Psi,q}(L_p(\Gamma))$, provided that their univariate ingredients possess a sufficiently large order of cancellation and regularity (compared to the smoothness parameter $\alpha$ of the space).
For this purpose, a theory of almost diagonal matrices on related sequence spaces $b^\alpha_{p,q}(\nabla)$ of Besov type is developed. 

\smallskip
\noindent \textbf{Keywords:} 
\textit{Besov spaces, wavelets, localization, sequence spaces, adaptive methods, non-linear approximation, manifolds, domain decomposition.}

\smallskip
\noindent \textbf{Subject Classification:} 
35B65, 
42C40, 
45E99, 
46A45, 
46E35,
47B37,
47B38, 
65T60.
\end{abstract}

\section{Motivation and background}
During the past years, wavelets have become a powerful tool in pure and applied mathematics.
Especially for the numerical treatment of (elliptic) operator equations (which arise in models for various problems of modern sciences, e.g., in the context of image analysis, or signal processing) wavelet-based algorithms turned out be very efficient. In these schemes wavelets are the method of choice to discretize the partial differential or integral equation under consideration such that finally an approximate solution is obtained by solving a series of finite-dimensional linear systems involving only sparsly populated matrices.
Roughly speaking, this sparsity is accomplished by compression strategies which heavily exploit the multiscale structure of the wavelets on the one hand, as well as their attractive analytical properties (concerning support, cancellation, and smoothness) on the other hand.
In practice, often an additional gain of performance is observed when using \emph{adaptive} refinement and coarsening strategies that rely on (local) estimates of the residuum. 
Meanwhile, for many problems, this higher rate of convergence of adaptive methods (compared to non-adaptive schemes based on uniform refinement) can be justified also from the theoretical point of view.
However, large problem classes remain for which a solid mathematical foundation for the need of adaptive algorithms still has to be developed.

Clearly, the best what can be expected from an adaptive wavelet solver is that it realizes (at least asymptotically) the rate of the best $n$-term (wavelet) approximation to the unknown solution, since this natural benchmark describes the smallest error \emph{any} (non-linear) method can achieve using at most $n$ degrees of freedom.
When measuring the approximation error in the norm of $L_2$, or of Sobolev Hilbert spaces $H^s$, the correct smoothness spaces for calculating best $n$-term rates in the context of adaptive algorithms are given by (shifted versions of) the so-called \emph{adaptivity scale of Besov spaces} $B^\alpha_\tau(L_\tau)$, where $\tau^{-1}=\alpha/d+1/2$. 
Surprisingly enough, for a large class of problems including, e.g., elliptic PDEs and boundary integral equations, there exist adaptive wavelet schemes which obtain these orders of convergence, while the number of arithmetic operations stays proportional to the number of unknowns; see, e.g., \cite{CDD01, CDD02, DahRaaWer+2007, DahHarSch2006, DahHarSch2007}.
In contrast, the performance of non-adaptive algorithms is governed by the maximal smoothness of the unknown solution in the Sobolev scale $H^s$; cf.~\cite{DahDahDeV1997, DeV1998}. 
For many practical problems this regularity is limited due to singularities caused by the shape of the underlying domain. 
On the other hand, particularly for elliptic PDEs on Euclidean domains, it is known that the influence of these singularities on the maximal Besov regularity is significantly smaller; see, e.g., \cite{DahDeV1997, DahDieHar+2014, Han2012}. 
Therefore we can state that, at least for such problems, adaptivity pays off and their theoretical analysis naturally leads to function spaces of Besov type.

In the realm of operator equations defined on manifolds (especially for problems formulated in terms of integral equations at the boundary of some non-smooth domain) we are faced with additional, quite serious problems: the construction of suitable wavelet systems, on the one hand, and the investigation of the relevant function spaces, on the other hand.
Meanwhile, for geometries that admit a decomposition into smooth patches (e.g., stemming from models in Computer Aided Geometric Design), a couple of wavelet bases are known which perform quite well \cite{CTU00, CohMas2000, DS99, DahSch1999, HarSch2004, HS06}. Hence, the first issue has been solved satisfactory, at least for practically important cases. The aim of this paper is to shed some light on the second problem, because the picture here is not as complete.

In classical function space theory Besov spaces are defined on the whole of $\R^d$, e.g., by Fourier techniques. Then functions (or distributions) from these spaces can be simply restricted to $d$-dimensional domains $\Omega$ of interest. In all practically relevant cases this definition then coincides with intrinsic descriptions given, e.g., in terms of moduli of smoothness; cf.\ \cite{T06,T08,T83}.
Accepting the fact that wavelet characterizations (restricted from $\R^d$ to~$\Omega$) might involve a few wavelets whose support exceed the boundary of the domain, this method provides a handy approach towards regularity studies in Besov spaces for the case of such sets.
The definition and analysis of corresponding function spaces on general manifolds is more critical: 
There usually trace operators or sufficiently smooth pullbacks of local (overlapping) charts are used. Unfortunately, no intrinsic characterizations for trace spaces on complicated geometries are available and it is unclear how a wavelet characterization of these spaces should look like. 
Moreover, following the second approach, the maximal regularity of the resulting spaces would be naturally restricted by the \emph{global} smoothness of the manifold under consideration.
Therefore, in \cite{DahWei2013} we proposed and successfully exploited a completely different method to define higher-order Besov-type spaces $B^{\alpha}_{\Psi,q}(L_p(\Gamma))$ on (specific, two-dimensional, closed) manifolds $\Gamma$ which are only \emph{patchwise} smooth.
The idea is rather simple, but quite effective: Since we like to employ wavelets for our approximation schemes, only the decay of the wavelet coefficients of the object to be approximated is important. 
Consequently, in our spaces we collected all those functions whose coefficients w.r.t.\ \emph{one fixed} (biorthogonal) wavelet basis $\Psi$ exhibit a certain rate of decay which would be expected from a classical wavelet characterization. 
Although, from the application point of view, a definition like this is completely justified, it has a theoretical drawback: The spaces constructed this way
formally depend on the chosen wavelet system. 
In \cite[Rem.~4.2(ii)]{DahWei2013} it was stated that there are good reasons to assume that spaces built up on wavelets $\Psi$ with ``similiar'' properties actually coincide, at least in the sense of equivalent (quasi-)norms.
The main purpose of the current article is the verification of this conjecture for a large range of parameters and three classes of wavelet bases ($\Psi_{\mathrm{DS}}$, $\Psi_{\mathrm{HS}}$, and $\Psi_{\mathrm{CTU}}$) which are widely used in practice; see \autoref{thm:equivalence}.

Our material is organized as follows: \autoref{sect:Seq} exclusively deals with (operators on) sequence spaces $b^\alpha_{p,q}(\nabla)$ which later on will be crucial for the  definition of our function spaces $B^{\alpha}_{\Psi,q}(L_p(\Gamma))$ of Besov-type.
They are indexed by what we call \emph{multiscale grids}~$\nabla$, i.e., by sets which are tailored for the use in the context of multiresolution expansions on manifolds.
Furthermore, guided by the pioneer work of Frazier and Jawerth \cite{FJ90}, here we introduce classes $\ad\!\left(b^{\alpha_0}_{p,q}(\nabla^0),b^{\alpha_1}_{p,q}(\nabla^1) \right)$ of \emph{almost diagonal} matrices whose entries decay sufficiently fast apart from the diagonal. 
The main result of this first part (which is of interest on its own, but also will be essential in what follows) then is given by \autoref{thm:ad}. 
It states that every such matrix induces a bounded linear operator between the Besov-type sequence spaces $b^{\alpha_i}_{p,q}(\nabla^i)$, $i=0,1$, under consideration.
The remaining part of the paper is concerned with function spaces. 
In \autoref{sect:Fun} we recall what is meant by patchwise smooth geometries $\Gamma$ and we review basic concepts from multiscale analysis. 
Additionally, here we describe the three biorthogonal wavelet constructions $\Psi=(\Psi^{\Gamma}, \tilde{\Psi}^\Gamma)$ we are mainly interested in:
\begin{itemize}
	\item[1.)] Composite wavelet bases $\Psi=\Psi_{\mathrm{DS}}$ introduced by Dahmen and Schneider \cite{DS99} (for general operator equations),
	\item[2.)] Modified composite wavelets $\Psi=\Psi_{\mathrm{HS}}$ due to Harbrecht and Stevenson \cite{HS06} (which are the first choice in the so-called boundary element method for integral equations), and
	\item[3.)] Bases $\Psi=\Psi_{\mathrm{CTU}}$ (primarily used in the wavelet element method) established by Canuto, Tabacco, Urban \cite{CTU99, CTU00}.
\end{itemize} 
Afterwards, in \autoref{sect:Besov}, we extend the definition of Besov-type function spaces $B^{\alpha}_{\Psi,q}(L_p(\Gamma))$ given in \cite{DahWei2013} to a fairly general setting which particularly covers spaces on decomposable manifolds $\Gamma$ that are needed for practical applications.
Moreover, here we investigate some of the theoretical properties of these scales such as embeddings, interpolation results, and best $n$-term approximation rates. 
In \autoref{thm:localized_composite}, \autoref{sect:change}, we employ the theory of almost diagonal matrices developed in \autoref{sect:Seq}, to derive sufficient conditions for continuous one-sided \emph{change of basis embeddings} 
\begin{equation*}
	B^{\alpha}_{\Psi,q}(L_p(\Gamma))\hookrightarrow B^{\alpha}_{\Phi,q}(L_p(\Gamma)),
	\qquad 0\leq \alpha < \alpha^*.
\end{equation*}
Finally, these embeddings then pave the way to state and prove \autoref{thm:equivalence} which constitutes the main result of this paper: the equivalence $B^{\alpha}_{\Psi,q}(L_p(\Gamma))=B^{\alpha}_{\Phi,q}(L_p(\Gamma))$ for $\Psi,\Phi\in\{\Psi_{\mathrm{DS}}, \Psi_{\mathrm{HS}}, \Psi_{\mathrm{CTU}}\}$.
The article is concluded with an appendix (\autoref{sect:appendix}) which contains auxiliary assertions, as well as some quite technical proofs.

\textbf{Notation:} For families $\{a_{\J}\}_{\J}$ and $\{b_{\J}\}_{\J}$ of non-negative real numbers over a common index set we write $a_{\J} \lesssim b_{\J}$ if there exists a constant $c>0$ (independent of the context-dependent parameters $\J$) such that
\begin{equation*}
	a_{\J} \leq c\cdot b_{\J}
\end{equation*}
holds uniformly in $\J$.
Consequently, $a_{\J} \sim b_{\J}$ means $a_{\J} \lesssim b_{\J}$ and $b_{\J} \lesssim a_{\J}$.
In addition, if not further specified, throughout the whole paper we will assume that $\Gamma$ denotes an arbitrary set endowed with some metric $\rho_\Gamma$. In view of the applications we have in mind, later on we will impose additional conditions on $\Gamma$.

\section{Sequence spaces and almost diagonal matrices}
\label{sect:Seq}
For every $\T\neq \leer$ let us define a \emph{pseudometric} on $\Gamma \times \T$ by setting
\begin{equation}\label{eq:pseudometric}
	\dist{(y,t)}{(y',t')} 
	:= \rho_\Gamma(y,y')
	\quad \text{for} \quad y,y'\in\Gamma \quad \text{and} \quad t,t'\in\T.
\end{equation}

\begin{defi}\label{ass:struct}
	Let $d\in\N$. 
	We say the sequence $\nabla:=(\nabla_j)_{j\in\N_0}$ is a \emph{multiscale grid of dimension $d$ (for $\Gamma$)} if there are absolute constants $c_1,c_2,c_3>0$ such that the following three assumptions are satisfied:
	\begin{enumerate}[label=(A\arabic{*}), ref=A\arabic{*}]
	\item \label{A1} For some finite index set $\T\neq \leer$ and all $j\in\N_0$ the set $\nabla_j$ forms a $(c_1\, 2^{-j})$-net for $\Gamma\times\T$ w.r.t.\ \link{eq:pseudometric}.
	\item \label{A2} $\nabla$ is uniformly well-separated, meaning that uniformly in $j\in\N_0$ it holds 
					\begin{equation*}
						\sup_{\xi\in\nabla_j}\#\{\xi'\in\nabla_j \;|\; \dist{\xi}{\xi'} \leq c_2\, 2^{-j} \} \lesssim 1.
					\end{equation*}
	\item \label{A3} $\nabla$ is uniformly $d$-dimensional, in the sense that uniformly in $j\in\N_0$ we have
					\begin{equation*}
						\sup_{\xi\in\nabla_j} \#\{\xi'\in\nabla_j \;|\; \dist{\xi}{\xi'} \leq c_3 \} \sim 2^{dj}.
					\end{equation*}
\end{enumerate}
\end{defi}

\begin{remark}
We note in passing that these assumptions clearly force $\Gamma$ to be $d$-dimensional (in a certain sense), as well.
Moreover, if the set $\Gamma$ is bounded, meaning that its diameter $\diam(\Gamma):=\sup_{y,y'\in\Gamma}\rho_\Gamma(y,y')$ is finite, then \autoref{ass:struct} implies that
\begin{enumerate}[label=\textit{(A4a)}, ref=A4a, leftmargin=*]
	\item \label{A4a} $\#\nabla_j \sim 2^{dj}$.
\end{enumerate}
Otherwise (if $\Gamma$ is unbounded), we necessarily have 
\begin{enumerate}[label=\textit{(A4b)}, ref=A4b, leftmargin=*]
	\item \label{A4b} $\#\nabla_j=\infty$ \quad \textit{for all} \quad $j\in\N_0$.\hfill$\square$
\end{enumerate}
\end{remark}

Typical examples of multiscale grids cover index sets related to expansions (w.r.t.\ certain building blocks such as wavelets, atoms, molecules, \ldots) in function spaces on $\Gamma=\R^d$, $d\in\N$. Indeed, when dealing with wavelet expansions, $\tilde{\nabla}_j \subseteq \Z^d \times \{1,\ldots,2^d-1\}$ usually is interpreted as index set encoding the location and type of all wavelets at level $j\in\N_0$.
The same reasoning also applies for (bounded) domains $\Gamma \subsetneq\R^d$. Obviously, every such sequence $\tilde{\nabla}=(\tilde{\nabla}_j)_{j\in\N_0}$ can be identified with some multiscale grid $\nabla$ in the above sense.
However, note that the index sets in \autoref{ass:struct} are designed in a way such that all indices are directly associated with some point \emph{in the domain~$\Gamma$} which allows to handle more complex situations, as well. 
If $d=2$, say, then we may also think of $\Gamma=\partial\Omega$ being the surface of some bounded polyhedral domain $\Omega\subset\R^3$, or an even more abstract (two-dimensional) manifold with or without boundary. 
Then for all $j\in\N_0$ the sets 
\begin{equation*}
	\{y \in\Gamma \sep (y,t)=\xi \in \nabla_j \text{ for some } t \in \T\}
\end{equation*}
yield discretizations of $\Gamma$. 
Let us mention that for infinitely smooth manifolds $\Gamma$ a similar approach has been proposed already in \cite{T08}.

Now we are well-prepared to introduce \emph{sequence spaces $b^\alpha_{p,q}(\nabla)$ of Besov type} on multiscale grids $\nabla$.

\subsection{Sequence spaces of Besov type}
The following \autoref{def:b} is inspired by sequence spaces which naturally arise in the context of classical (wavelet)  characterizations of function spaces; see, e.g., \cite[Def.~3]{DNS06}.
Here and in what follows we slightly abuse the notation and write $(j,\xi)\in\nabla=(\nabla_j)_{j\in\N_0}$ if $j\in\N_0$ and $\xi\in\nabla_j$.

\begin{defi}\label{def:b}
Let $0<p<\infty$, $0<q\leq \infty$, as well as $\alpha\in\R$, and let $\nabla=(\nabla_j)_{j\in\N_0}$ denote some multiscale grid of dimension $d\in\N$ in the sense of \autoref{ass:struct}. Then
\begin{itemize}
	\item[(i)] we define the sequence space $b^\alpha_{p,q}(\nabla) := \left\{ \bm{a}=(a_{(j,\xi)})_{(j,\xi)\in\nabla} \subset \C \sep \norm{\bm{a} \sep b^\alpha_{p,q}(\nabla)} < \infty \right\}$ endowed with the (quasi-)norm
\begin{equation}\label{def:seq_norm}
	\norm{\bm{a} \sep b^\alpha_{p,q}(\nabla)} 
	:= \begin{cases}
		\left(  \displaystyle \sum_{j=0}^\infty\nolimits 2^{ j \left( \alpha + d \big[1/2-1/p \big]\!\right)q } \left[ \sum_{\xi\in\nabla_j} \nolimits \abs{a_{(j,\xi)}}^p \right]^{q/p} \right)^{1/q}, & \text{ if } q<\infty,\\
		\displaystyle \sup_{j\in\N_0} 2^{ j \left( \alpha + d \big[1/2-1/p \big]\!\right) } \left[ \sum_{\xi\in\nabla_j}\nolimits \abs{a_{(j,\xi)}}^p \right]^{1/p}, & \text{ if } q=\infty.
	\end{cases}
		\end{equation}
	\item[(ii)] we set $\sigma_p := \sigma_p(d) := d \cdot \max\!\left\{\frac{1}{p}-1,0\right\}$.
\end{itemize}
\end{defi}

\begin{remark}\label{rem:props_b}
Some comments are in order:
\begin{itemize}
	\item[(i)] Using standard arguments it can be checked that $b_{p,q}^\alpha(\nabla)$ are always quasi-Banach spaces. They are Banach spaces if and only if $\min\{p,q\} \geq 1$, and Hilbert spaces if and only if $p=q=2$.
	\item[(ii)] For special choices of the parameters $p$, $q$, and $\alpha$, the (quasi-)norms defined in \link{def:seq_norm} simplify significantly. 
	In particular,
\begin{equation*}
	\norm{\bm{a} \sep b^0_{2,2}(\nabla)} 
	= \left( \sum_{(j,\xi)\in\nabla} \abs{a_{(j,\xi)}}^2 \right)^{1/2},
\end{equation*}
such that $b^0_{2,2}(\nabla)=\ell_2(\nabla)$ with equal norms. More general we have $b^{\alpha_\tau}_{\tau,\tau}(\nabla)=\ell_\tau(\nabla)$ with equal norms, where
\begin{equation}\label{eq:adapt}
	\tau = \left(\frac{\alpha_\tau}{d} + \frac{1}{2}\right)^{-1} 
	\quad \text{with} \quad
	\alpha_\tau \geq 0
\end{equation}
	defines the so-called \emph{adaptivity scale} w.r.t.\ $\ell_2(\nabla)$.
\hfill$\square$
\end{itemize} 
\end{remark}

Before we turn to operators acting on the sequence spaces just introduced, let us add the following embedding result which will be useful later on. Its proof is postponed to the appendix; see \autoref{sect:proofs}.

\begin{prop}[Standard embeddings]
\label{prop:seq-embedding}
Let $0<p_0,p_1<\infty$, as well as $0<q_0,q_1\leq\infty$, and $\alpha,\gamma\in\R$. 
Moreover, let $\nabla$ denote some multiscale grid of dimension $d\in\N$. 
If, in addition, condition \link{A4a} holds for $\nabla$ then the embedding
	\begin{equation*}
		b^{\alpha+\gamma}_{p_0,q_0}(\nabla) 
		\hookrightarrow b^{\alpha}_{p_1,q_1}(\nabla) 
	\end{equation*}		
	exists (as a set theoretic inclusion) 
	if and only if it is continuous if and only if one of the subsequent conditions applies:
	\begin{itemize}
		\item[$\bullet$)] $\gamma > d \cdot \max\!\left\{0, \frac{1}{p_0} - \frac{1}{p_1} \right\}$,
		\item[$\bullet$)] $\gamma = d \cdot \max\!\left\{0, \frac{1}{p_0} - \frac{1}{p_1} \right\}$ \quad and \quad $q_0 \leq q_1$.
	\end{itemize}
Furthermore, if $\nabla$ satisfies \link{A4b} instead of \link{A4a} then a corresponding characterization holds with the additional condition $p_0\leq p_1$.
\end{prop}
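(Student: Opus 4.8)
The plan is to reduce the embedding question to a sequence of elementary inequalities between weighted $\ell_p$–$\ell_q$ spaces, using the fact that under \link{A4a} each level set satisfies $\#\nabla_j \sim 2^{dj}$. First I would fix notation: abbreviate the level-$j$ weight in \link{def:seq_norm} by $w_j(\alpha,p) := 2^{j(\alpha + d[1/2-1/p])}$ and write $\norm{\bm{a}_j \sep \ell_p} := (\sum_{\xi\in\nabla_j}\abs{a_{(j,\xi)}}^p)^{1/p}$, so that $\norm{\bm{a}\sep b^\alpha_{p,q}(\nabla)} = \norm{(w_j(\alpha,p)\norm{\bm{a}_j\sep\ell_p})_{j}\sep\ell_q}$. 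The argument then splits into two independent parts: the behaviour \emph{within} each level (comparing $\ell_{p_0}$ and $\ell_{p_1}$ on a set of cardinality $\sim 2^{dj}$) and the behaviour \emph{across} levels (comparing weighted $\ell_{q_0}$ and $\ell_{q_1}$ in the index $j$).

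For the \emph{sufficiency} direction I would argue as follows. If $p_0 \leq p_1$, then $\norm{\bm{a}_j\sep\ell_{p_1}} \leq \norm{\bm{a}_j\sep\ell_{p_0}}$ trivially, and since $\gamma \geq 0$ in this regime the weight ratio $w_j(\alpha,p_1)/w_j(\alpha+\gamma,p_0) = 2^{-j\gamma}2^{jd(1/p_1-1/p_0)} \lesssim 1$ is bounded (here $\gamma \geq d\max\{0,1/p_0-1/p_1\} = 0$, using $1/p_1 - 1/p_0 \le 0$); if moreover $\gamma=0$ then this ratio is identically $1$ and we need $q_0\le q_1$ to pass from $\ell_{q_0}$ to $\ell_{q_1}$, otherwise $\gamma>0$ gives a geometrically decaying factor $2^{-j\gamma}$ that absorbs any $\ell_{q_0}\hookrightarrow\ell_{q_1}$ loss via Hölder. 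If instead $p_0 > p_1$, I would use Hölder's inequality on the finite set $\nabla_j$ to get $\norm{\bm{a}_j\sep\ell_{p_1}} \leq (\#\nabla_j)^{1/p_1-1/p_0}\norm{\bm{a}_j\sep\ell_{p_0}} \sim 2^{jd(1/p_1-1/p_0)}\norm{\bm{a}_j\sep\ell_{p_0}}$; combined with the weights this produces the factor $2^{-j(\gamma - d(1/p_0-1/p_1))}$, which is bounded when $\gamma \geq d(1/p_0-1/p_1)$ and decays when the inequality is strict — again the strict case handles arbitrary $q_0,q_1$, the equality case needs $q_0\le q_1$. This is exactly the stated list of conditions, since $\max\{0,1/p_0-1/p_1\}$ selects the active case automatically.

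For the \emph{necessity} direction (and the reduction "set-theoretic inclusion $\Rightarrow$ continuity"), I would use a closed-graph / uniform-boundedness argument for the implication that a mere inclusion is automatically bounded (both spaces are quasi-Banach by Remark \ref{rem:props_b}(i)), and then test the embedding on explicit sequences to force the parameter conditions. Concretely: supporting $\bm{a}$ on a single index $(j,\xi)$ with value $1$ and letting $j\to\infty$ forces $w_j(\alpha,p_1)/w_j(\alpha+\gamma,p_0) \lesssim 1$, i.e.\ $\gamma \geq d(1/p_0-1/p_1)$; supporting $\bm{a}$ on all of $\nabla_j$ with constant value (using $\#\nabla_j\sim 2^{dj}$ from \link{A4a}) forces in addition $\gamma \geq 0$, so together $\gamma \geq d\max\{0,1/p_0-1/p_1\}$; and in the boundary case where this last inequality is an equality, choosing $a_{(j,\xi)}$ constant in $\xi$ on $\nabla_j$ and with $j$-profile an arbitrary $\ell_{q_0}$–but–not–$\ell_{q_1}$ sequence forces $q_0\le q_1$. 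The \link{A4b} case is identical in the within-level analysis except that now $\#\nabla_j=\infty$, so Hölder on $\nabla_j$ is unavailable and the embedding $\ell_{p_0}(\nabla_j)\hookrightarrow\ell_{p_1}(\nabla_j)$ only holds when $p_0\le p_1$ — hence the extra hypothesis — while the across-level part is unchanged.

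I expect the only genuinely delicate point to be the careful bookkeeping in the borderline case $\gamma = d\max\{0,1/p_0-1/p_1\}$ when one also has to track the interplay with $q_0,q_1$: one must check that the within-level gain is \emph{exactly} zero (not merely bounded) so that no summability margin is available to trade against $q$, and then invoke the standard $\ell_{q_0}\hookrightarrow\ell_{q_1}$ dichotomy. Everything else is routine application of Hölder's inequality and geometric-series estimates; I would relegate the full computation to the appendix as indicated, presenting here only the reduction to the weighted-sequence picture and the three test sequences that pin down the necessity.
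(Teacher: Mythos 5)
Your overall strategy (level-wise H\"older plus a weighted $\ell_{q_0}\to\ell_{q_1}$ comparison across levels, with explicit test sequences for necessity) is viable and genuinely different from the paper's proof, which instead identifies $\nabla$ with a standard grid $\tilde{\nabla}\subseteq\Z^d\times\{1,\ldots,2^d-1\}$ and then quotes known embedding results: \cite[Lem.~4]{DNS06} in the bounded case, and embeddings of classical Besov spaces $B^\alpha_{p,q}(\R^d)$ via the wavelet isomorphism of \cite[Thm.~1.64]{T06} in the unbounded case, supplemented by counterexamples. However, as written your sufficiency argument has the two cases of the maximum interchanged, and in each case you arrive at a condition strictly weaker than the true one, so the argument certifies embeddings that are false. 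The weight ratio is
\begin{equation*}
	\frac{w_j(\alpha,p_1)}{w_j(\alpha+\gamma,p_0)} = 2^{-j\gamma}\,2^{jd(1/p_0-1/p_1)},
\end{equation*}
not $2^{-j\gamma}2^{jd(1/p_1-1/p_0)}$. Hence for $p_0\le p_1$ (where the step $\ell_{p_0}\hookrightarrow\ell_{p_1}$ is free) the exponent $1/p_0-1/p_1$ is \emph{non-negative}, the maximum equals $1/p_0-1/p_1$ rather than $0$ (you reversed the inequality $1/p_0\ge 1/p_1$), and boundedness of the ratio requires $\gamma\ge d(1/p_0-1/p_1)$, not merely $\gamma\ge0$. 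For $p_0>p_1$ the H\"older loss $(\#\nabla_j)^{1/p_1-1/p_0}\sim 2^{jd(1/p_1-1/p_0)}$ exactly cancels the factor $2^{jd(1/p_0-1/p_1)}$ in the weight ratio, leaving $2^{-j\gamma}$, so the condition there is $\gamma\ge 0$, not $\gamma\ge d(1/p_0-1/p_1)$ (which is negative in that case). As written, your first case would ``prove'' the false embedding with $p_0<p_1$ and $\gamma=0<d(1/p_0-1/p_1)$, and your second case would ``prove'' it for some $\gamma<0$; both are refuted by your own (correct) necessity tests, since the single-index-per-level sequence forces $\gamma\ge d(1/p_0-1/p_1)$ and the constant-on-a-level sequence forces $\gamma\ge 0$.

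A further, smaller point: in the borderline case $\gamma=d\max\{0,1/p_0-1/p_1\}$ with $p_0<p_1$ (so $\gamma>0$), the sequence that is constant in $\xi$ on each $\nabla_j$ has per-level ratio $2^{-j\gamma}\to 0$ and cannot detect the necessity of $q_0\le q_1$; there you must instead use a single-index-per-level sequence, whose per-level ratio is exactly $1$ in that regime --- this is precisely the counterexample the paper builds. The remaining ingredients (closed-graph reduction of set-theoretic inclusion to continuity for quasi-Banach spaces, and the treatment of the case \link{A4b}, where H\"older on an infinite level set is unavailable and $p_0\le p_1$ becomes necessary) are fine.
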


\begin{remark}
Based on the reduction arguments we used to prove \autoref{prop:seq-embedding} it would be possible to derive a lot of further results related to the spaces $b^{\alpha}_{p,q}(\nabla)$ such as, e.g., interpolation assertions or estimates for entropy numbers. 
As such properties are beyond the scope of the present paper, we will not follow this line of research here.
\hfill$\square$
\end{remark}

\subsection{Almost diagonal matrices}
Clearly, every linear mapping $M$ defined between sequence spaces (indexed by multiscale grids $\nabla^0$ and $\nabla^1$ on $\Gamma$, respectively) can be represented as the formal product with some double-infinite complex matrix 
\begin{equation*}
	\M := \{ m_{(j,\xi),(k,\eta) }\}_{(j,\xi)\in\nabla^1, (k,\eta)\in\nabla^0},
\end{equation*}
i.e., $M\colon \bm{a}\mapsto M\bm{a} := \left( (\M\bm{a})_{(j,\xi)} \right)_{(j,\xi)\in\nabla^1}$ with
\begin{equation*}
	(\M\bm{a})_{(j,\xi)} := \sum_{(k,\eta)\in\nabla^0} m_{(j,\xi),(k,\eta)}\, a_{(k,\eta)}, \qquad (j,\xi)\in\nabla^1.
\end{equation*}

We shall follow the ideas given in \cite[Sect.~3]{FJ90} and define classes of \emph{almost diagonal} matrices for the sequence spaces of Besov type established in \autoref{def:b}.
\begin{defi}\label{def:ad}
	Let $0<p<\infty$ and $0<q\leq \infty$. 
	Moreover, let $\nabla^0$ and $\nabla^1$ denote two multiscale grids of dimension $d\in\N$ for some set $\Gamma$.
	\begin{itemize}
		\item[(i)] For $\alpha_0,\alpha_1\in\R$ a matrix $\M=\{m_{(j,\xi),(k,\eta)}\}_{(j,\xi)\in\nabla^1,(k,\eta)\in\nabla^0}$ is called \emph{almost diagonal between $b^{\alpha_0}_{p,q}(\nabla^0)$ and $b^{\alpha_1}_{p,q}(\nabla^1)$} if there exists $\epsilon > 0$ such that
			\begin{equation}\label{cond_ad}
				\sup_{(j,\xi)\in\nabla^1,(k,\eta)\in\nabla^0} \frac{\abs{m_{(j,\xi),(k,\eta)}}}{\omega_{(j,\xi),(k,\eta)}(\epsilon)} < \infty,
			\end{equation}
			where
			\begin{equation*}
				\omega_{(j,\xi),(k,\eta)}(\epsilon) 
				:= 2^{k\alpha_0-j\alpha_1} \cdot \frac{\min\!\left\{ 2^{-(j-k)(d/2+\epsilon)}, 2^{(j-k)(d/2+\epsilon+ \sigma_p)} \right\}}{\left[ 1+ \min\!\left\{ 2^k, 2^j \right\} \dist{\xi}{\eta} \right]^{d+\epsilon+\sigma_p}}.
			\end{equation*}
			In this case we write $\M \in \ad\!\left(b^{\alpha_0}_{p,q}(\nabla^0),b^{\alpha_1}_{p,q}(\nabla^1) \right)$.
		\item[(ii)] When there is no danger of confusion, we shall write $\ad_{p}^{\alpha_0,\alpha_1}$ as a shortcut for the class $\ad\!\left(b^{\alpha_0}_{p,q}(\nabla^0),b^{\alpha_1}_{p,q}(\nabla^1) \right)$. Furthermore, if $\alpha_0=\alpha_1=\alpha \in \R$ then we set $\ad_{p}^{\alpha}:=\ad_{p}^{\alpha_0,\alpha_1}$.
	\end{itemize}
\end{defi}

Roughly speaking, a matrix $\M$ belongs to the class $\ad_{p}^{\alpha_0,\alpha_1}$ if its entries decay fast enough apart from the diagonal ($m_{(j,\xi),(j,\xi)}$). 
If the sets $\nabla^i_j$, $i\in\{0,1\}$, are interpreted as index sets for the location (and type) of all wavelets at level $j$ on $\Gamma$ then the matrix entries $m_{(j,\xi),(k,\eta)}$ need to be small for all wavelets (indexed by $(j,\xi)\in\nabla^1$ and $(k,\eta)\in\nabla^0$, respectively) which
\begin{itemize}
	\item[$\bullet$)] are supported far away from each other (then $\dist{\xi}{\eta}\gg 0$), or
	\item[$\bullet$)] correspond to very different levels (then $\abs{j-k}\gg 0$).
\end{itemize}
Note that quite similar matrix classes naturally appear in the context of compression and preconditioning strategies used by elaborated (adaptive) wavelet algorithms for operator equations (e.g., for Schwartz kernel problems). 
Without going into details, we like to mention the so-called \emph{Lemari\'{e} algebra} and refer to \cite{Coh2003, CDD01, CDD02, Dah1997,Lem1989} for details.
		
\begin{remark}\label{rem:ad}
Some further comments are in order:
\begin{itemize}
	\item[(i)] Observe that \link{cond_ad} is independent of the index $q$ which justifies to suppress this fine-tuning parameter in the abbreviations in \autoref{def:ad}(ii).
	\item[(ii)] Using the monotonicity of $\sigma_p$ (cf.~\autoref{def:b}(ii)), it is easily seen that
	\begin{equation*}
		\ad_{\widehat{p}}^{\alpha_0,\alpha_1} 
		\subseteq \ad_{p}^{\alpha_0,\alpha_1} 
		\subseteq \ad_{1}^{\alpha_0,\alpha_1}
		= \ad_{\tilde{p}}^{\alpha_0,\alpha_1}
		\qquad \text{for all} \qquad
		0 < \widehat{p} \leq p \leq 1 \leq \tilde{p} < \infty
	\end{equation*}
	and all $\alpha_0,\alpha_1\in\R$. That is, condition \link{cond_ad} is getting stronger when $1/p$ increases and it is independent of $p$ when $1/p\leq 1$.
	\hfill$\square$
\end{itemize}
\end{remark}

We are ready to state and prove the main result of this \autoref{sect:Seq}. 
It is inspired by \cite[Thm.~3.3]{FJ90} and shows that every almost diagonal matrix can be interpreted as a continuous linear operator on the sequence spaces introduced in \autoref{def:b}.

\begin{theorem}\label{thm:ad}
	Let $0<p<\infty$ and $0<q\leq \infty$, as well as $\alpha_0,\alpha_1\in\R$. 
	Moreover, let $\nabla^0$ and $\nabla^1$ denote two multiscale grids of dimension $d\in\N$ for some set $\Gamma$.	
	Then every matrix $\M\in \ad\!\left(b^{\alpha_0}_{p,q}(\nabla^0),b^{\alpha_1}_{p,q}(\nabla^1) \right)$ induces a bounded linear operator $M \colon b^{\alpha_0}_{p,q}(\nabla^0) \nach b^{\alpha_1}_{p,q}(\nabla^1)$.
\end{theorem}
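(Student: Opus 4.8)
The plan is to adapt the approach of Frazier and Jawerth \cite[Thm.~3.3]{FJ90} and reduce the assertion to a weighted convolution inequality in $\ell_q$. Since $\M\in\ad\!\left(b^{\alpha_0}_{p,q}(\nabla^0),b^{\alpha_1}_{p,q}(\nabla^1)\right)$, we may fix $\epsilon>0$ with $\abs{m_{(j,\xi),(k,\eta)}}\lesssim\omega_{(j,\xi),(k,\eta)}(\epsilon)$ for all indices. Replacing the entries by their moduli and $\bm{a}$ by $(\abs{a_{(k,\eta)}})$, it suffices to bound the operator with the non-negative kernel $\omega(\epsilon)$ applied to a non-negative sequence. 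For $k\in\N_0$ set $A_k:=\bigl[\sum_{\eta\in\nabla^0_k}\abs{a_{(k,\eta)}}^p\bigr]^{1/p}$ and $u_k:=2^{k(\alpha_0+d[1/2-1/p])}A_k$, so that $\norm{\bm{a}\sep b^{\alpha_0}_{p,q}(\nabla^0)}=\norm{(u_k)_{k}\sep\ell_q}$, and define $B_j,v_j$ analogously for the image at level $j$. The claim then reduces to $\norm{(v_j)_j\sep\ell_q}\lesssim\norm{(u_k)_k\sep\ell_q}$. I would split $(\M\bm{a})_{(j,\xi)}=\sum_k\sum_{\eta\in\nabla^0_k}m_{(j,\xi),(k,\eta)}a_{(k,\eta)}$ according to $k\leq j$ (where the minimum in the numerator of $\omega$ equals $2^{-(j-k)(d/2+\epsilon)}$ and $\min\{2^j,2^k\}=2^k$) and $k>j$ (where it equals $2^{-(k-j)(d/2+\epsilon+\sigma_p)}$ and $\min\{2^j,2^k\}=2^j$).

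The geometric core is a counting estimate for multiscale grids: for every $\lambda>0$, uniformly in $\xi$, $j$, $k$,
\begin{equation*}
	\sum_{\eta\in\nabla^0_k}\bigl[1+2^{\min\{j,k\}}\dist{\xi}{\eta}\bigr]^{-(d+\lambda)}\lesssim 2^{d\max\{k-j,0\}},
\end{equation*}
and symmetrically with $\eta\in\nabla^0_k$ replaced by $\xi\in\nabla^1_j$ and $\max\{k-j,0\}$ by $\max\{j-k,0\}$. This follows by decomposing the grid into dyadic shells $\{\eta:\dist{\xi}{\eta}\sim 2^{\ell-\min\{j,k\}}\}$, $\ell\in\N_0$, and invoking \autoref{ass:struct}: the net property \link{A1} together with \link{A2} and \link{A3} bound the cardinality of each shell by $\lesssim 2^{d(\ell+\max\{k-j,0\})}$, while the tail $\sum_\ell 2^{d\ell}2^{-\ell(d+\lambda)}$ converges because $\lambda>0$. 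I would isolate this counting lemma (and relegate its proof to the appendix).

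Granting this, the spatial sums are controlled level by level. Fix $j,k$ and put $S_{j,k}(\xi):=\sum_{\eta\in\nabla^0_k}\bigl[1+2^{\min\{j,k\}}\dist{\xi}{\eta}\bigr]^{-(d+\epsilon+\sigma_p)}a_{(k,\eta)}$. For $p\geq 1$ (so $\sigma_p=0$) one applies Hölder's inequality in $\eta$ and the two counting bounds, obtaining $\bigl[\sum_{\xi\in\nabla^1_j}S_{j,k}(\xi)^p\bigr]^{1/p}\lesssim 2^{d(j-k)/p}A_k$ for $k\leq j$ and $\lesssim 2^{d(k-j)(1-1/p)}A_k$ for $k>j$. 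For $p<1$ one uses instead the $p$-subadditivity $S_{j,k}(\xi)^p\leq\sum_\eta\bigl[\cdots\bigr]^{-p(d+\epsilon+\sigma_p)}a_{(k,\eta)}^p$; here the definition of $\sigma_p$ in \autoref{def:b}(ii) is precisely what makes $p(d+\epsilon+\sigma_p)=d+p\epsilon>d$, so the same shell count applies and yields $\bigl[\sum_\xi S_{j,k}(\xi)^p\bigr]^{1/p}\lesssim 2^{d(j-k)/p}A_k$ for $k\leq j$ and $\lesssim A_k$ for $k>j$. Feeding these bounds, together with the scalar factor $2^{k\alpha_0-j\alpha_1}$ and the level-separation factor from $\omega$, into the definitions of $v_j$ and $u_k$, a short computation with exponents (the identity $d[1/2-1/p]-(d/2+\epsilon)+d/p=-\epsilon$ on the part $k\leq j$, and its $\sigma_p$-counterpart on the part $k>j$) shows that in each case all powers of $2$ except $2^{-\abs{j-k}\epsilon}$ cancel, i.e.
\begin{equation*}
	v_j\lesssim\sum_{k\in\N_0}2^{-\abs{j-k}\epsilon}\,u_k.
\end{equation*}
Since $\bigl(2^{-\abs{\ell}\epsilon}\bigr)_{\ell\in\Z}\in\ell_r$ with $r:=\min\{1,q\}$, the (quasi-)convolution inequality in $\ell_q$ gives $\norm{(v_j)_j\sep\ell_q}\lesssim\norm{(u_k)_k\sep\ell_q}$; the case $q=\infty$ is identical, using $\sum_k 2^{-\abs{j-k}\epsilon}\lesssim 1$.

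I expect the main obstacle to be the geometric counting estimate. In contrast to the classical $\R^d$-setting of \cite{FJ90}, it must be extracted purely from the axioms \link{A1}--\link{A3}, and one has to keep track of the fact that the pseudometric \link{eq:pseudometric} disregards the $\T$-coordinate while the sums range over the full grids $\nabla^i_j$, and that $\nabla^0$ and $\nabla^1$ may carry different index sets $\T$. Once that estimate is in place, the remaining ingredients — the case distinction $p\gtrless 1$ with the $\sigma_p$-correction and the exponent arithmetic collapsing everything to the kernel $2^{-\abs{j-k}\epsilon}$ — are routine.
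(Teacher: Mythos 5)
Your plan is correct, and at the top level it follows the same Frazier--Jawerth blueprint as the paper: split the matrix according to $k\leq j$ versus $k>j$, control the spatial sums at fixed $(j,k)$ by a counting estimate extracted from \link{A1}--\link{A3}, and finish with a discrete convolution against $2^{-\abs{j-k}\delta}$ in the level index (the paper's \autoref{lem:rych}). The genuine structural difference lies in how you treat $0<p\leq 1$ and the smoothness indices. The paper first proves the case $\alpha_0=\alpha_1=0$, $1<p<\infty$, packaging the spatial sums as integral operators and invoking Schur's test (\autoref{lem:int_op} plus \autoref{lem:sum}); it then reduces $0<p\leq 1$ to the case $1<p/r<\infty$ by the nonlinear substitution $\tilde{a}_{(k,\eta)}=2^{-kd[1/2-r/2]}\abs{a_{(k,\eta)}}^r$ together with Jensen's inequality, and finally removes the restriction $\alpha_0=\alpha_1=0$ by rescaling. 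You instead handle $p\leq 1$ directly via $p$-subadditivity, observing that $\sigma_p$ is defined precisely so that $p(d+\epsilon+\sigma_p)=d+p\epsilon>d$, and you carry $\alpha_0,\alpha_1$ through the computation from the start; your Hölder argument is of course just the proof of Schur's test unwound. Your route is somewhat more self-contained (no substitution trick, no separate rescaling step), at the price of having to state the counting lemma in the sharper form $\lesssim 2^{d\max\{k-j,0\}}$ rather than the paper's cruder $\max\{1,2^{(j-k)s}\}$ from \autoref{lem:sum}; both versions follow from the same dyadic-shell decomposition and both close the exponent arithmetic (the paper simply ends up with decay $2^{-\abs{j-k}\epsilon/p'}$ resp.\ $2^{-\abs{j-k}\epsilon/p}$ instead of your full $2^{-\abs{j-k}\epsilon}$, which is immaterial). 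One cosmetic point: for $p<1$ the passage of the $\ell_p(\nabla^1_j)$ quasi-norm through the sum over $k$ yields $v_j\lesssim\bigl[\sum_k 2^{-\abs{j-k}\epsilon p}u_k^p\bigr]^{1/p}$ rather than the displayed $v_j\lesssim\sum_k 2^{-\abs{j-k}\epsilon}u_k$; this is exactly the form covered by \autoref{lem:rych} with inner exponent $r=p$, so the conclusion is unaffected, but the display should be adjusted.
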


Before we conclude this section by presenting a detailed derivation of this assertion, let us remark that \autoref{thm:ad} (as well as its proof) differs from \cite[Thm.~3.3]{FJ90} to some extend.
Indeed, in the current paper, we deal with sequence spaces which correspond to function spaces of Besov type, in contrast to Triebel-Lizorkin spaces discussed in \cite{FJ90}. 
Consequently, as we shall see, we can avoid the use of maximal inequalities due to Fefferman-Stein.
Another difference is that our notion of almost diagonal matrices (and thus also \autoref{thm:ad}) depends on two smoothness indices $\alpha_0$ and $\alpha_1$ which might be useful in applications.
Moreover, the authors of \cite{FJ90} needed to apply duality results in their proof (to handle the term which corresponds to $\M^+$ in Step 2 of our proof given below). 
This is not necessary in our case, but it would be possible of course.
Finally, due to the mild assumptions on the two (possibly different) multiscale grids $\nabla^i$, $i\in\{0,1\}$, in sharp contrast to \cite{FJ90}, our theorem is not restricted to spaces related to function spaces on the whole of $\R^d$.
Indeed, in \autoref{sect:change}, we will employ \autoref{thm:ad} to derive a result for Besov-type function spaces on (bounded) manifolds or domains, respectively.

\begin{proof}[Proof (of \autoref{thm:ad})]
Let $0<q\leq\infty$. 
Following the lines of \cite{FJ90}, we split the proof into three parts corresponding to different parameter constellations. 
To keep the presentation as streamlined as possible we moreover postpone some technicalities to the appendix in \autoref{sect:aux}.

\emph{Step 1 (case $\alpha_0=\alpha_1=0$ and $1<p<\infty$).}
For $p>1$ let $\M\in\ad_{p}^{0}$ and $\bm{a}\in b^{0}_{p,q}(\nabla^0)$. 
Writing $\M=\M^{-}+\M^{+}$, where we set
\begin{equation*}
	(\M^{-}\bm{a})_{(j,\xi)} 
	:= \sum_{0\leq k<j} \sum_{\eta \in \nabla^0_k} m_{(j,\xi),(k,\eta)} \, a_{(k,\eta)}
	\quad \text{and} \quad 
	(\M^{+}\bm{a})_{(j,\xi)} 
	:= \sum_{k\geq j} \sum_{\eta \in \nabla^0_k} m_{(j,\xi),(k,\eta)} \, a_{(k,\eta)}
\end{equation*}
for every $(j,\xi)\in\nabla^1$, we have to show that the associated linear operators $M^{-}$ and $M^+$ are bounded mappings from $b^{0}_{p,q}(\nabla^0)$ into itself, i.e., that
\begin{equation}\label{cond}
	\norm{ M^{\pm}\bm{a} \sep b^{0}_{p,q}(\nabla^1) } 
	\leq c \norm{\bm{a} \sep b^{0}_{p,q}(\nabla^0)}
\end{equation}
with some $c>0$ independent of $\bm{a}$.

Let us first consider $\M^-$ and $M^-$, respectively. Since $1<p<\infty$, the triangle inequality together with Minkowski's inequality yields for every fixed $j\in\N_0$,
\begin{align*}
	\left[ \sum_{\xi\in\nabla_j^1} \abs{ (\M^{-}\bm{a})_{(j,\xi)} }^p \right]^{1/p}
	&\leq \left[ \sum_{\xi\in\nabla_j^1} \left( \sum_{0\leq k < j} \abs{ \sum_{\eta \in \nabla^0_k} m_{(j,\xi),(k,\eta)} \, a_{(k,\eta)}} \right)^p \right]^{1/p} \\
	&\leq \sum_{0\leq k < j} \left( \sum_{\xi\in\nabla_j^1} \abs{ \sum_{\eta \in \nabla^0_k} \abs{m_{(j,\xi),(k,\eta)}} \, \abs{a_{(k,\eta)}} }^p \right)^{1/p}.
\end{align*}
Due to \autoref{def:ad} we have $\abs{m_{(j,\xi),(k,\eta)}} \leq C \cdot \omega_{(j,\xi),(k,\eta)}(\epsilon)$ for some $C=C(\epsilon)>0$ and all $(j,\xi)\in\nabla^1$, $(k,\eta)\in \nabla^0$. 
Note that $\sigma_p=0$, since $1<p<\infty$. Thus, we conclude
\begin{align}\label{boundM-}
	\left[ \sum_{\xi\in\nabla_j^1} \abs{ (\M^{-}\bm{a})_{(j,\xi)} }^p \right]^{1/p}
	&\lesssim \sum_{0\leq k < j} 2^{(k-j)(d/2+\epsilon)} \left( \sum_{\xi\in\nabla_j^1} \abs{ \sum_{\eta \in \nabla^0_k} \frac{\abs{a_{(k,\eta)}}}{\left[ 1+ 2^k \dist{\xi}{\eta} \right]^{d+\epsilon}} }^p \right)^{1/p}
\end{align}
for every $j\in\N_0$. 
Observe that for every fixed $I\in\N_0$ the sets $\nabla^i_I$, $i\in\{0,1\}$, equipped with the counting measure $\mu_I$ form $\sigma$-finite measure spaces and that $L_p(\nabla^i_I,\mu_I) = \ell_p(\nabla^i_I)$.
Hence, for every $j,k\in\N_0$ with  $0\leq k<j$, we can rewrite the sum in the brackets as
\begin{equation*}
	\norm{ T_{j,k,\epsilon}^- \left(\abs{a_{(k,\eta)}}\right)_{\eta\in\nabla^0_k} \sep \ell_p(\nabla_j^1) },
\end{equation*}
where $T_{j,k,\epsilon}^- \colon \ell_p(\nabla^0_k)\nach \ell_p(\nabla_j^1)$ is an integral (summation) operator with kernel
\begin{equation*}
	K_{j,k,\epsilon}^-(\xi,\eta) 
	:= \frac{1}{\left[ 1+ 2^k \dist{\xi}{\eta} \right]^{d+\epsilon}},
	\qquad \xi \in \nabla_j^1, \, \eta\in\nabla^0_k.
\end{equation*}
Using a Schur-type argument (see \autoref{lem:int_op}) together with \autoref{lem:sum} from \autoref{sect:aux} below, for $s=d+\epsilon>d$ its operator norm can be bounded by
\begin{align}
	\norm{ T_{j,k,\epsilon}^- \sep \mathcal{L}\left(\ell_p(\nabla^0_k),\ell_p(\nabla_j^1)\right)} 
	&\leq \left( \sup_{\eta\in\nabla^0_k} \sum_{\xi\in\nabla_j^1} \abs{K_{j,k,\epsilon}^-(\xi,\eta)} \right)^{1/p} \left( \sup_{\xi\in\nabla_j^1} \sum_{\eta\in\nabla^0_k} \abs{K_{j,k,\epsilon}^-(\xi,\eta)} \right)^{1/p'} \nonumber\\
	&\leq C' \left( \max\!\left\{1, 2^{(j-k)(d+\epsilon)}\right\} \right)^{1/p} \left( \max\!\left\{1, 2^{(k-k)(d+\epsilon)}\right\} \right)^{1/p'} \nonumber\\
	&= C' \, 2^{(j-k)(d+\epsilon)/p}, \label{norm_T-}
\end{align}
where $1/p'+1/p=1$ and $C'>0$ does not depend on $j$ and $k$.
Hence, from \link{boundM-} it follows
\begin{align}
	\left[ \sum_{\xi\in\nabla_j^1} \abs{ (\M^{-}\bm{a})_{(j,\xi)} }^p \right]^{1/p}
	&\lesssim \sum_{0\leq k < j} 2^{(k-j)(d/2+\epsilon)} 2^{(j-k)(d+\epsilon)/p} \left( \sum_{\eta\in\nabla^0_k} \abs{a_{(k,\eta)}}^p \right)^{1/p} \nonumber\\
	&= 2^{-jd\left[\frac{1}{2}-\frac{1}{p}\right]} \sum_{0\leq k < j} 2^{-(j-k)\epsilon/p'} \left[ 2^{kd\left[\frac{1}{2}-\frac{1}{p}\right]} \left( \sum_{\eta\in\nabla^0_k} \abs{a_{(k,\eta)}}^p \right)^{1/p} \right].\label{boundM-2}
\end{align}
Finally, we multiply by $2^{jd\left[\frac{1}{2}-\frac{1}{p}\right]}$, take the $\ell_q$-(quasi-)norm with respect to $j\in\N_0$, and apply \autoref{lem:rych} (with $\delta:=\epsilon/p'>0$ and $r:=1$) to obtain
\begin{equation*}
	\left\{ \sum_{j\in\N_0} 2^{jd\left[\frac{1}{2}-\frac{1}{p}\right]q} \left[ \sum_{\xi\in\nabla_j^1} \abs{ (\M^{-}\bm{a})_{(j,\xi)} }^p \right]^{q/p}\right\}^{1/q} 
	\lesssim \left\{ \sum_{k\in\N_0} 2^{k d\left[\frac{1}{2}-\frac{1}{p}\right]q} \left[ \sum_{\eta\in\nabla^0_k} \abs{ a_{(k,\eta)} }^p \right]^{q/p}\right\}^{1/q}, 
\end{equation*}
if $q<\infty$, and 
\begin{equation*}
	\sup_{j\in\N_0} 2^{jd\left[\frac{1}{2}-\frac{1}{p}\right]} \left[ \sum_{\xi\in\nabla_j^1} \abs{ (\M^{-}\bm{a})_{(j,\xi)} }^p \right]^{1/p} 
	\lesssim \sup_{k\in\N_0} 2^{k d\left[\frac{1}{2}-\frac{1}{p}\right]} \left[ \sum_{\eta\in\nabla^0_k} \abs{ a_{(k,\eta)} }^p \right]^{1/p},
\end{equation*}
if $q=\infty$, respectively.
Hence, we have shown \link{cond} for $M^-$.

We turn to $\M^+$ and $M^+$, respectively. The analogue of \link{boundM-} for fixed $j\in\N_0$ reads
\begin{equation*}
	\left[ \sum_{\xi\in\nabla_j^1} \abs{ (\M^{+}\bm{a})_{(j,\xi)} }^p \right]^{1/p}
	\lesssim \sum_{k \geq j} 2^{(j-k)(d/2+\epsilon)} \left( \sum_{\xi\in\nabla_j^1} \abs{ \sum_{\eta \in \nabla^0_k} \frac{\abs{a_{(k,\eta)}}}{\left[ 1+ 2^j \dist{\xi}{\eta} \right]^{d+\epsilon}} }^p \right)^{1/p}
\end{equation*}
such that the kernel of the associated integral operator $T^+_{j,k,\epsilon}\colon \ell_p(\nabla^0_k)\nach\ell_p(\nabla_j^1)$ is given by
\begin{equation*}
	K_{j,k,\epsilon}^+(\xi,\eta) 
	:= \frac{1}{\left[ 1+ 2^j \dist{\xi}{\eta} \right]^{d+\epsilon}},
	\qquad \xi \in \nabla_j^1, \, \eta\in\nabla^0_k.
\end{equation*}
Hence, \link{norm_T-} is replaced by $\norm{ T_{j,k,\epsilon}^+ } \leq C' \, 2^{(k-j)(d+\epsilon)/p'}$
such that \link{boundM-2} now reads
\begin{align*}
	\left[ \sum_{\xi\in\nabla_j^1} \abs{ (\M^{+}\bm{a})_{(j,\xi)} }^p \right]^{1/p}
	&\lesssim \, 2^{-jd\left[\frac{1}{2}-\frac{1}{p}\right]} \sum_{k \geq j} 2^{(j-k)\epsilon/p} \left[ 2^{kd\left[\frac{1}{2}-\frac{1}{p}\right]} \left( \sum_{\eta\in\nabla^0_k} \abs{a_{(k,\eta)}}^p \right)^{1/p} \right].
\end{align*}
Since $\delta:=\epsilon/p>0$, the assertion thus follows as before. This shows \link{cond} also for $M^+$ and hence it completes Step 1.

\emph{Step 2 (case $\alpha_0=\alpha_1=0$ and $0<p\leq 1$).}
Let $\bm{a}=(a_{(k,\eta)})_{(k,\eta)\in\nabla^0}\in b^0_{p,q}(\nabla^0)$ and 
choose $0 < r < p \leq 1$, i.e., $1<p/r<\infty$.
For every such $r$ we define $\tilde{\bm{a}}:=\tilde{\bm{a}}(r):= \left( \tilde{a}_{(k,\eta)} \right)_{(k,\eta)\in\nabla^0}$ by
\begin{equation*}
	\tilde{a}_{(k,\eta)}
	:=  2^{-k d\left[ \frac{1}{2} - \frac{r}{2} \right] } \abs{a_{(k,\eta)}}^r, \quad (k,\eta)\in\nabla^0,
	\quad \text{ so that } \quad
	\norm{\bm{a} \sep b^0_{p,q}(\nabla^0)} 
	= \norm{\tilde{\bm{a}} \sep b^0_{p/r,q/r}(\nabla^0)}^{1/r},
\end{equation*}
i.e., $\tilde{\bm{a}} \in b^0_{p/r,q/r}(\nabla^0)$.
Similarly, given a matrix $\M=\{m_{(j,\xi),(k,\eta)}\}_{(j,\xi)\in\nabla^1,(k,\eta)\in\nabla^0}$, we set
\begin{equation*}
	\tilde{\M}
	:=\tilde{\M}(r)
	:= \left\{ \tilde{m}_{(j,\xi),(k,\eta)} \right\}_{(j,\xi)\in\nabla^1,(k,\eta)\in\nabla^0}
	:= \left\{ 2^{(k-j)d\left[\frac{1}{2}-\frac{r}{2}\right]} \abs{m_{(j,\xi),(k,\eta)}}^r \right\}_{(j,\xi)\in\nabla^1,(k,\eta)\in\nabla^0}.
\end{equation*}
If we assume that there exists $\epsilon>0$ such that $\M$ belongs to $\ad_p^{0}$ then
straightforward calculations show that $\tilde{\M}=\tilde{\M}(r)\in \ad_{p/r}^{0}$ with $\tilde{\epsilon}:=\epsilon r + d(r/p-1)$.
Note that $\tilde{\epsilon}>0$, provided that we restrict ourselves to $r$ with $pd/(\epsilon p + d) < r < p$ (which can be done w.l.o.g.). 
Using Jensen's inequality we obtain
\begin{align*}
	\abs{\left( \M\bm{a} \right)_{(j,\xi)}}^p
	\leq \left( \sum_{(k,\eta)\in\nabla^0} \abs{m_{(j,\xi),(k,\eta)} a_{(k,\eta)}}^r \right)^{p/r}
	= 2^{jd\left[\frac{1}{2}-\frac{r}{2}\right]p/r} \abs{ \left( \tilde{\M}\tilde{\bm{a}} \right)_{(j,\xi)} }^{p/r}
\end{align*}
for every $(j,\xi)\in\nabla^1$. 
Therefore, for $q<\infty$, the associated operator $M$ satisfies
\begin{align*}
	\norm{M\bm{a} \sep b^0_{p,q}(\nabla^1)}^r
	&= \left( \sum_{j\in\N_0} 2^{jd\left[\frac{1}{2}-\frac{1}{p}\right]q} \left[ \sum_{\xi\in\nabla_j^1} \abs{\left(\M \bm{a} \right)_{(j,\xi)}}^p \right]^{q/p} \right)^{r/q} \\
	&\leq \left( \sum_{j\in\N_0} 2^{jd\left[\frac{1}{2}-\frac{1}{p/r}\right]q/r} \left[ \sum_{\xi\in\nabla_j^1} \abs{\left(\tilde{\M} \tilde{\bm{a}} \right)_{(j,\xi)}}^{p/r} \right]^{q/p} \right)^{q/r}
\end{align*}
which can be bounded from above by 
\begin{equation*}
	\norm{\tilde{M}\tilde{\bm{a}} \sep b^0_{p/r,q/r}(\nabla^1)} 
	\lesssim \norm{\tilde{\bm{a}} \sep b^0_{p/r,q/r}(\nabla^0)}
	= \norm{\bm{a} \sep b^0_{p,q}(\nabla^0)}^r.
\end{equation*}	
Here the last estimate follows from Step 1, since $\tilde{\M} \in\ad^{0}_{p/r}$ and $1<p/r<\infty$.
Clearly, the same is true also for $q=\infty$. 
Thus, we have shown that $\M\in\ad^{0}_{p}$ implies the continuity of $M\colon b^0_{p,q}(\nabla^0)\nach b^0_{p,q}(\nabla^1)$, as claimed.

\emph{Step 3 (case $\alpha_i\neq 0$).}
Following \cite{FJ90} we note that the result for the case $\alpha_i \neq 0$, $i\in\{0,1\}$, can be reduced to the assertion for $\alpha_0=\alpha_1=0$ as follows: 
Obviously, for $i\in\{0,1\}$, we have
\begin{equation*}
	\bm{a}=\left( a_{(k,\eta)} \right)_{(k,\eta)\in\nabla^i}\in b^{\alpha_i}_{p,q}(\nabla^i) 
	\qquad \text{if and only if} \qquad 
	\tilde{\bm{a}} := \left( 2^{k\alpha_i} a_{(k,\eta)} \right)_{(k,\eta)\in\nabla^i} \in b^{0}_{p,q}(\nabla^i)
\end{equation*}
with $\norm{\bm{a} \sep b^{\alpha_i}_{p,q}(\nabla^i)}=\norm{\tilde{\bm{a}} \sep b^{0}_{p,q}(\nabla^i)}$. Moreover,
$\M=\{m_{(j,\xi),(k,\eta)}\}_{(j,\xi)\in\nabla^1,(k,\eta)\in\nabla^0} \in \ad_{p}^{\alpha_0,\alpha_1}$ if and only if
$\tilde{\M}:=\{2^{j\alpha_1-k\alpha_0} m_{(j,\xi),(k,\eta)}\}_{(j,\xi)\in\nabla^1,(k,\eta)\in\nabla^0}$ belongs to $ \ad_{p}^{0}$. 
Since $\norm{M\bm{a} \sep b^{\alpha_1}_{p,q}(\nabla^1)}$ clearly equals $\norm{\tilde{M}\tilde{\bm{a}} \sep b^0_{p,q}(\nabla^1)}$, the linear operator $M\colon b^{\alpha_0}_{p,q}(\nabla^0) \nach b^{\alpha_1}_{p,q}(\nabla^1)$ with matrix $\M$ is bounded if and only if $\tilde{M}\colon b^{0}_{p,q}(\nabla^0) \nach b^{0}_{p,q}(\nabla^1)$ with matrix $\tilde{\M}$ is continuous.
As this argument holds for every $p$ and $q$, the proof is complete.
\end{proof}

\section{Besov-type spaces based on wavelet expansions}
\label{sect:Fun}
In this section we turn to function spaces. 
In particular, here we are going to extend our notion of Besov-type spaces established in \cite{DahWei2013} to a fairly general setting. 
These (quasi-)Banach spaces are subsets of the space of all square-integrable functions defined on some domain or manifold $\Gamma$. 
In view of the applications we have in mind, we are especially interested in bounded manifolds which admit a decomposition into smooth parametric images of the unit cube in $d$ spatial dimensions, since 
domains of this type are widely used in practice, e.g., in Computer Aided Geometric Design (CAGD). 
Moreover, as explained in the introduction, they are well-suited for the efficient numerical treatment of operator equations using FEM or BEM schemes based on multiscale analysis techniques. 
Consequently, in what follows we will focus on biorthogonal wavelet systems $\Psi$ (as they were constructed and analyzed, e.g., in \cite{CTU99,CTU00,DS99,HS06}) for such patchwise smooth manifolds $\Gamma$. 
In the Besov-type spaces $B^\alpha_{\Psi,q}(L_p(\Gamma))$ we then collect all those $L_2(\Gamma)$-functions, whose sequence of expansion coefficients w.r.t.\ some fixed basis $\Psi$ decays sufficiently fast (i.e., belongs to the space $b^{\alpha}_{p,q}(\nabla)$ introduced in \autoref{def:b}). 
Recently, it has been demonstrated that these Besov-type function spaces naturally arise in the analysis of adaptive numerical methods for operator equations on manifolds; the smoothness of the solutions (measured in these scales) determines the rate of their best $n$-term (wavelet) approximation which in turn serves as a benchmark for the performance of ideal adaptive algorithms.
For details we refer to \cite{DahWei2013}, as well as to the introduction of the current paper, and to the references given therein.

In \autoref{sect:domains} below we describe the setting for the domains or manifolds under consideration in detail. Afterwards we recall some fundamental ideas from the field of multiscale analysis and review basic features of the three special wavelet constructions on manifolds we are going to deal with.
Finally, \autoref{sect:Besov} is concerned with the definition of Besov-type spaces based on wavelet expansions, as well as with some of their theoretical properties which are relevant for practical applications.

\subsection{Domain decomposition and representation of geometry}\label{sect:domains}
When it comes to applications such as, e.g., the numerical treatment of integral equations defined on complicated geometries, often the following setting is assumed.

Given natural numbers $m$ and $d$ with $d \leq m$, let $\Gamma$ denote a bounded $d$-dimensional manifold in
$\R^m$ with or without a boundary.
We assume that $\Gamma$ is at least globally Lipschitz continuous and admits a decomposition
\begin{equation}\label{eq:manifold}
	\overline{\Gamma} = \bigcup_{i=1}^N \overline{\Gamma_i}
\end{equation}
into finitely-many, essentially disjoint \emph{patches} $\Gamma_i$,
i.e., $\Gamma_i \cap \Gamma_j = \leer$ for all $i\neq j$.
In addition, we assume that these patches are given as smooth parametric images of the $d$-dimensional unit cube which will serve as a reference domain. That is, we assume
\begin{equation*}
	\overline{\Gamma_i} = \kappa_i([0,1]^d),
\end{equation*}
where for each $i=1,\ldots,N$ the function $\kappa_i \colon \R^d \nach \R^m$ is supposed to be sufficiently regular.
Moreover, the splitting of $\Gamma$ needs to be conforming in the sense that for all $i\neq j$ the pullback of the intersection $\overline{\Gamma_i}\cap \overline{\Gamma_j}$ is either empty or a lower dimensional face of $[0,1]^d$.
In the latter case the set $\overline{\Gamma_i}\cap \overline{\Gamma_j}$ is called \emph{interface} between the patches $\Gamma_i$ and $\Gamma_j$.
Finally, we assume that the parametrizations $\kappa_i$ are chosen in a way that for every interface there exists a permutation $\pi_{i,j}$ such that 
\begin{equation*}
	\kappa_j \circ \pi_{i,j} \circ \kappa_i^{-1} = \mathrm{Id}
	\qquad \text{on} \qquad \overline{\Gamma_i} \cap \overline{\Gamma_j},
\end{equation*}
where $\mathrm{Id}$ denotes the identical mapping and $\pi_{i,j}(\bm{x}):=(x_{\pi_{i,j}(1)}, \ldots, x_{\pi_{i,j}(d)})$ for $\bm{x}=(x_1,\ldots,x_d)$ in $[0,1]^d$.

For the remainder of this paper a domain or manifold which meets all these requirements is said to be \emph{decomposable} or \emph{patchwise smooth}. 

\begin{example}
Practically relevant examples for the manifolds under consideration are given by surfaces of bounded polyhedra in three spatial dimensions, i.e., $\Gamma=\partial\Omega$ with $\Omega\subset\R^3$. 
The reader may think of, e.g., Fichera's corner $\Omega=[-1,1]^3\setminus [0,1]^3$ which often serves as a model domain for numerical simulations. (Here the reentrant corner causes a singularity in the solutions to large classes of operator equations that is typical for problems on non-smooth domains.)
\hfill$\square$
\end{example}

\begin{remark}
We stress the fact that although our setting is tailored to handle boundary integral equations defined on (two-dimensional) closed surfaces, it covers open manifolds and bounded domains (of arbitrary dimension) as well.
Thus, in principle the approach given here is suitable also for the treatment of boundary value problems involving partial differential operators.
\hfill$\square$
\end{remark}

\subsection{Multiresolution analysis and biorthogonal wavelets on patchwise smooth manifolds}
\label{sect:MRA_wavelets}
One powerful tool to construct approximate solutions to operator equations defined on decomposable domains or manifolds in the sense the previous section is given by (adaptive) wavelet methods. 
In this approach the equation under consideration is discretized using a suitable set of basis functions stemming from a multiresolution analysis (a precise definition is given below). 
Then truncated versions of the resulting infinite linear system are solved. 
The attractive features of wavelets (such as smoothness, cancellation, and support properties which together imply the needed compression) combined with adaptive refinement and coarsening strategies finally yield an efficient algorithm.
However, the construction of wavelet bases on patchwise smooth manifolds $\Gamma$ is far away from being trivial as we shall now explain.

Let us assume for a moment that $\Gamma$ denotes an arbitrary set (equipped with some metric) which additionally allows the definition of $L_2(\Gamma)$, the space of (equivalence classes of) square-integrable functions $f\colon\Gamma\nach\C$, equipped with some inner product $\distr{\cdot}{\cdot}$. 
Moreover, we assume to be given a \emph{multiresolution analysis} (MRA) for this space, i.e., a sequence $\V=(V_j)_{j\in\N}$ of closed linear subspaces of $L_2(\Gamma)$ which satisfies
\begin{equation*}
	V_j \subset V_{j+1}, 
	\quad j\in\N,
	\qquad \text{and} \qquad
	\overline{\bigcup_{j\in\N} V_j}^{\,L_2(\Gamma)} = L_2(\Gamma).
\end{equation*}
Now the main idea in multiscale analysis is to find a suitable \emph{system of wavelet type} $\Psi^\Gamma := \bigcup_{j\in \N_0} \Psi^\Gamma_j \subset L_2(\Gamma)$ such that the functions at \emph{level} $j$ span some complement of $V_j$ in $V_{j+1}$. 
Given any countable subset $X \subset L_2(\Gamma)$ we let $S(X)$ denote $\overline{\spann{X}}^{L_2(\Gamma)}$. Then
we assume that
\begin{equation*}
	V_1 = S(\Psi_0^\Gamma)
	\qquad \text{and} \qquad 
	V_{j+1} = V_j \oplus S(\Psi_j^\Gamma), 
	\quad j \in\N,
\end{equation*}
where, for every $j\in\N_0$, $\Psi^\Gamma_j:=\{ \psi^\Gamma_{j,\xi} \sep \xi \in \nabla_j^\Psi\}$ is indexed by some set $\nabla_j^\Psi$ such that the sequence $\nabla^\Psi:=(\nabla_j^\Psi)_{j\in\N_0}$ 
forms a multiscale grid for $\Gamma$ in the sense of \autoref{ass:struct}.
Of course it would be favorable if $\Psi^\Gamma$ would constitute an orthonormal basis for $L_2(\Gamma)$, but, in practice, such bases are not always feasible.
However, as we shall see, there exist \emph{biorthogonal} constructions which retain most of the desired properties of orthonormal bases. 

In the (more flexible) biorthogonal setting a second multiresolution analysis $\tilde{\V}=(\tilde{V}_j)_{j\in\N}$ of $L_2(\Gamma)$, together with a corresponding system of wavelet type $\tilde{\Psi}^\Gamma$ (again indexed by $\nabla^\Psi$), is needed such that the following duality w.r.t.\ the inner product $\distr{\cdot}{\cdot}$ holds for all $j\in\N$:
\begin{equation*}
	\tilde{V}_j \, \bot \, S(\Psi_j^\Gamma)
	\qquad \text{and} \qquad
	V_j \, \bot \, S(\tilde{\Psi}_j^\Gamma).
\end{equation*}
It then follows that both the systems $\Psi^\Gamma$ and $\tilde{\Psi}^\Gamma$ form (Schauder-) bases of $L_2(\Gamma)$ and that they are biorthogonal in the sense that
\begin{equation*}
	\distr{\psi^\Gamma_{j,\xi}}{\tilde{\psi}^\Gamma_{k,\eta}}
	= \begin{cases}
		1, & \text{if } j=k \text{ and } \xi=\eta,\\
		0, & \text{otherwise.}
	\end{cases}
\end{equation*}
Finally, under suitable conditions, we can assume that $\Psi^\Gamma$ and $\tilde{\Psi}^\Gamma$ even form biorthogonal \emph{Riesz bases} for $L_2(\Gamma)$, i.e., every $u\in L_2(\Gamma)$ can be written as
\begin{equation}\label{eq:expansion}
	u 
	= \sum_{(j,\xi)\in\nabla^\Psi} \distr{u}{\tilde{\psi}^\Gamma_{j,\xi}} \psi^\Gamma_{j,\xi} 
	= \sum_{(j,\xi)\in\nabla^\Psi} \distr{u}{\psi^\Gamma_{j,\xi}} \tilde{\psi}^\Gamma_{j,\xi} 
\end{equation}
and we have the norm equivalences
\begin{equation}\label{eq:norm_equiv}
	\norm{u \sep L_2(\Gamma)} 
	\sim \norm{ \left( \distr{u}{\tilde{\psi}^\Gamma_{j,\xi}} \right)_{(j,\xi)\in\nabla^\Psi} \sep \ell_2(\nabla^\Psi)}
	\sim \norm{ \left( \distr{u}{\psi^\Gamma_{j,\xi}} \right)_{(j,\xi)\in\nabla^\Psi} \sep \ell_2(\nabla^\Psi)}.
\end{equation}
Consequently, then all (primal and dual) wavelets are normalized in the sense that
\begin{equation}\label{eq:normalized}
	\norm{ \psi^\Gamma_{j,\xi} \sep L_2(\Gamma)}
	\sim \norm{\tilde{\psi}^\Gamma_{j,\xi} \sep L_2(\Gamma)} 
	\sim 1,
	\qquad (j,\xi)\in\nabla^\Psi.
\end{equation}
For the rest of this paper we shall use $\Psi$ as a shortcut for (bi-)orthogonal wavelet Riesz bases $(\Psi^\Gamma,\tilde{\Psi}^\Gamma)$ on the set $\Gamma$ with the properties just mentioned.

\begin{remark}
When dealing with the (more restrictive) orthogonal setting we simply assume that the primal and the dual MRA, as well as the corresponding systems of wavelet type, coincide, i.e., then $\tilde{\V}=\V$ and $\tilde{\Psi}^\Gamma=\Psi^\Gamma$.
\hfill$\square$
\end{remark}

Although these concepts of multiscale representations can be employed in a quite general framework, for the ease of presentation and in view of the applications we have in mind, we are mainly interested in sets $\Gamma$ which meet the requirements of \autoref{sect:domains} in what follows. 
In particular, we only consider manifolds of finite diameter and explicitly exclude the case of unbounded domains in $\R^d$.
For this setting a suitable inner product for $L_2(\Gamma)$, which is equivalent to the canonical one, is given by
\begin{equation}\label{eq:inner_prod}
	\distr{f}{g} := \sum_{i=1}^N \distr{f\circ\kappa_i}{g\circ\kappa_i}_{L_2([0,1]^d)},
	\qquad f,g\in L_2(\Gamma),
\end{equation}
because it allows to shift the challenging problem of constructing wavelets from the (possibly complicated) manifold $\Gamma$ to the unit cube $[0,1]^d$.
Thanks to the tensor product structure of this reference domain, multivariate wavelets then can be easily deduced from  univariate ones which in turn are constructed with the help of some dual pair $(\theta, \tilde{\theta})$ of refinable functions on the real line.
When required by the final application even (homogeneous) boundary conditions can be incorporated at this point; see, e.g., \cite{DahSchn1998, Pri2010}.

An important family of underlying dual pairs is based on B-splines, as they allow very efficient point evaluation and quadrature routines:
\begin{equation*}
	\left(\theta, \tilde{\theta}\right)
	 := \left({_{D}}{\theta}, {_{D,\tilde{D}}}{\tilde{\theta}}\right),
	\quad \text{where} \quad D,\tilde{D}\in\N \quad \text{with} \quad \tilde{D}\geq D \quad \text{and} \quad D+\tilde{D} \quad \text{even}.
\end{equation*}
Therein ${_{D}}{\theta}$ denotes the $D$th-order centered cardinal B-spline and ${_{D,\tilde{D}}}{\tilde{\theta}}$ is some compactly supported, refinable function which is exact of order $\tilde{D}$. 
Moreover, it can be checked that the regularity of ${_{D}}{\theta}$ equals $D-1/2$ and ${_{D,\tilde{D}}}{\tilde{\theta}}$ can be chosen in a way such that its regularity increases proportional with $\tilde{D}$, i.e.,
\begin{equation*}
	\gamma 
	:= \sup\!\left\{s>0 \sep {_{D}}{\theta} \in H^s(\R)\right\} = D-\frac{1}{2}
	\quad \text{and} \quad 
	\tilde{\gamma} 
	:= \sup\!\left\{s>0 \sep {_{D,\tilde{D}}}{\tilde{\theta}} \in H^s(\R)\right\} \sim \tilde{D}.
\end{equation*}

By now there exist several constructions that use the idea of lifting wavelets from the cube to the patches $\Gamma_i$ of the manifold under consideration. 
In the sequel we particularly focus on the prominent case of \emph{composite wavelet bases} which were initially established by Dahmen and Schneider in \cite{DS99} and further developed by Harbrecht and Stevenson \cite{HS06}. Another important set of wavelets is due to Canuto, Tabacco and Urban \cite{CTU99,CTU00}.
These three constructions (which will be labeled by $\Psi_{\mathrm{DS}}$, $\Psi_{\mathrm{HS}}$, and $\Psi_{\mathrm{CTU}}$, respectively) mainly differ in the treatment of wavelets supported in the vicinity of the interfaces. 
Without going into details, we mention that here some ``gluing'' or ``matching'' procedure is necessary in order to finally obtain wavelets which are continuous across the interfaces and, at the same time, yield the compression properties required by applications.

For these three systems it has been shown that for all choices of construction parameters the resulting wavelets provide norm equivalences 
\begin{equation}\label{eq:norm_equivalence_sob}
	\norm{ u \sep H^s(\Gamma)} 
	\sim \left( \sum_{j=0}^\infty 2^{2js} \sum_{\xi\in \nabla_j^\Psi} \abs{\distr{u}{\tilde{\psi}_{j,\xi}^\Gamma}}^2 \right)^{1/2} 
\end{equation}
for the scale of classical Sobolev spaces $H^s(\Gamma)$ in some (very limited) range of smoothness parameters $s$ which does not depend on $D$ and $\tilde{D}$; see, e.g., \cite[Thm.~4.6.1]{DS99}. 
Moreover, for $s\in(-\tilde{\gamma},D-1/2)$ the same equivalences hold for generalized spaces $H_s(\Gamma)$ based on $\distr{\cdot}{\cdot}$ which coincide with $H^s(\Gamma)$ provided that $\abs{s}$ is sufficiently small. 
This shows that these (Hilbert) spaces are characterized by \emph{all} bases $\Psi=(\Psi^\Gamma,\tilde{\Psi}^\Gamma)$ under consideration as long as their construction parameters $D^{\Psi}$ and $\tilde{D}^{\Psi}$ are chosen sufficiently large.
In \autoref{sect:change} we are going to extend this assertion to a fairly large class of (quasi-)Banach spaces: We show that, in the sense of equivalent (quasi-)norms, any two wavelet systems $\Psi,\Phi\in\{\Psi_{\mathrm{DS}}, \Psi_{\mathrm{HS}}, \Psi_{\mathrm{CTU}}\}$ generate the same Besov-type spaces (see \autoref{def:Bspq} below)
\begin{equation*}
	B^{\alpha}_{\Psi,q}(L_p(\Gamma))=B^{\alpha}_{\Phi,q}(L_p(\Gamma))
\end{equation*}
provided that the smoothness of the space $\alpha$ is smaller than some quantity depending on $D^{\Psi}$ and $D^{\Phi}\in\N$. 
To prove this we will have to bound the inner products $\distr{\psi_{k,\eta}^{\Gamma}}{\tilde{\phi}^{\Gamma}_{j,\xi}}$ subject to the relation of $(j,\xi)\in\nabla^{\Phi}$ and $(k,\eta)\in\nabla^{\Psi}$ to each other.
For this purpose, the following properties shared by all the three bases of interest will be useful.
As their proof is quite technical, we postpone it to the appendix; see \autoref{sect:proofs}.

\begin{lemma}\label{lem:cancel}
	For a decomposable $d$-dim.\ manifold $\Gamma$ let 
	$\Psi=(\Psi^\Gamma,\tilde{\Psi}^\Gamma) \in \{\Psi_{\mathrm{DS}}, \Psi_{\mathrm{HS}}, \Psi_{\mathrm{CTU}}\}$ 
	denote a wavelet basis (as constructed in \cite{DS99}, \cite{HS06}, or \cite{CTU99,CTU00}) indexed by some multiscale grid $\nabla^\Psi=(\nabla_j^\Psi)_{j\in\N_0}$ for $\Gamma$.
	Then for all $j\in\N_0$ and each $\xi=(y,t)\in\nabla_j^\Psi \subset \Gamma\times\T$ we have that
		\begin{enumerate}[label=(P\arabic{*}), ref=P\arabic{*}]
			\item \label{P1} $y \in \supp{\psi^{\Gamma}_{j,\xi}} \cap \supp{\tilde{\psi}^{\Gamma}_{j,\xi}}$,
			\item \label{P2} $\diam\!\left(\supp{\psi^{\Gamma}_{j,\xi}}\right) \sim \diam\!\left(\supp{\tilde{\psi}^{\Gamma}_{j,\xi}}\right) \sim 2^{-j}$, and
			\item \label{P3} there exist cubes $\tilde{C}_{j,\xi}^{\,i} \subset [0,1]^d$ with $\abs{\tilde{C}_{j,\xi}^{\,i}} \lesssim 2^{-j d}$ and $\kappa_i^{-1}\!\left(\supp \tilde{\psi}_{j,\xi}^\Gamma \cap \Gamma_i \right) \subseteq \tilde{C}_{j,\xi}^{\,i}$ such that for every $s\in(d/2, D^{\Psi}]$ and all functions $f\colon \Gamma\nach\C$ it holds 
					\begin{equation}\label{patchwise_bound}
						\abs{\distr{f}{\tilde{\psi}_{j,\xi}^\Gamma}}
						\lesssim \sum_{i=1}^N 2^{-js} \abs{f\circ\kappa_i}_{H^s(\tilde{C}^{\,i}_{j,\xi})},
					\end{equation}
					provided that the right-hand side is finite. 
					Moreover, a completely analogous statement holds true for cubes $C_{j,\xi}^{\, i}$ 
					when $\tilde{\psi}_{j,\xi}^\Gamma$ and $D^{\Psi}$ are replaced by 
					$\psi_{j,\xi}^\Gamma$ and $\tilde{D}^{\Psi}$, respectively.
		\end{enumerate}
\end{lemma}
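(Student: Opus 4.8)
\textbf{Plan of proof for \autoref{lem:cancel}.}
The three properties are essentially local assertions about the univariate building blocks transported to the patches $\Gamma_i$, so the plan is to reduce everything to the reference cube $[0,1]^d$ via the parametrizations $\kappa_i$ and to known facts about the one-dimensional dual pair $(\theta,\tilde\theta)$. First I would fix notation for the patchwise structure of the wavelets: each $\psi^\Gamma_{j,\xi}$ (resp.\ $\tilde\psi^\Gamma_{j,\xi}$) is, on every patch $\Gamma_i$ it meets, the $\kappa_i$-pullback of a finite linear combination (with uniformly bounded coefficients, independent of $j$ and $\xi$) of tensor-product wavelets/scaling functions at level $j$ built from $\theta$ or $\tilde\theta$, possibly modified near the interfaces by the gluing procedure of \cite{DS99,HS06,CTU99,CTU00}. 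The key structural input, which I would extract from those constructions, is that this ``matching'' near interfaces only couples finitely many patches, preserves the scale $j$, and does not destroy the support size or the order of vanishing moments; this is the point where one has to quote the specific properties of $\Psi_{\mathrm{DS}}$, $\Psi_{\mathrm{HS}}$, $\Psi_{\mathrm{CTU}}$ rather than argue abstractly.

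For \ref{P1} and \ref{P2}: on the cube, the univariate generators $\theta$, $\tilde\theta$ at level $j$ are supported in intervals of length $\sim 2^{-j}$, hence the tensor products live in cubes of side $\sim 2^{-j}$; the finitely-many-term linear combinations and the interface matching enlarge the support only by a bounded factor, so the diameter of $\kappa_i^{-1}(\supp\psi^\Gamma_{j,\xi}\cap\Gamma_i)$ is $\sim 2^{-j}$ and, since each $\kappa_i$ is a bi-Lipschitz $C^\infty$ diffeomorphism with constants uniform in $i$ (finitely many patches), $\diam(\supp\psi^\Gamma_{j,\xi})\sim 2^{-j}$; the same for $\tilde\psi^\Gamma_{j,\xi}$, giving \ref{P2}. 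For \ref{P1} one simply \emph{defines} (as is done in all three constructions and as is implicit in \autoref{ass:struct}) the point $y$ attached to the index $\xi=(y,t)$ to be the image under some $\kappa_i$ of a point inside the cube-support of the corresponding reference wavelet that lies in the supports of both $\psi$ and $\tilde\psi$; for the B-spline pair such a common point exists because the supports of the dual generators overlap. I would remark that $y$ then lies in $\supp\psi^\Gamma_{j,\xi}\cap\supp\tilde\psi^\Gamma_{j,\xi}$ by construction.

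For \ref{P3}, which is the substantive part: set $\tilde C^{\,i}_{j,\xi}$ to be the smallest axis-parallel cube in $[0,1]^d$ containing $\kappa_i^{-1}(\supp\tilde\psi^\Gamma_{j,\xi}\cap\Gamma_i)$; by \ref{P2} this has $|\tilde C^{\,i}_{j,\xi}|\lesssim 2^{-jd}$. Using the inner product \link{eq:inner_prod}, write $\distr{f}{\tilde\psi^\Gamma_{j,\xi}}=\sum_{i=1}^N\distr{(f\circ\kappa_i)\cdot g_i}{(\tilde\psi^\Gamma_{j,\xi}\circ\kappa_i)}_{L_2([0,1]^d)}$ where $g_i$ is the (smooth, uniformly bounded with bounded derivatives) Jacobian-type weight coming from comparing \link{eq:inner_prod} with the canonical inner product on $\Gamma$; absorb $g_i$ into the reference wavelet, which does not change its support or, up to constants, its moment properties. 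On the cube, $\tilde\psi^\Gamma_{j,\xi}\circ\kappa_i$ is a bounded combination of level-$j$ tensor wavelets built from $\tilde\theta$, which — because $\tilde\theta$ is exact of order $\tilde D\geq D^\Psi$ — annihilates polynomials of degree $<D^\Psi$; hence $\distr{P}{\tilde\psi^\Gamma_{j,\xi}\circ\kappa_i}_{L_2(\tilde C^{\,i}_{j,\xi})}=0$ for all such $P$. The standard estimate then is: for $s\in(d/2,D^\Psi]$, pick a polynomial $P$ of degree $<D^\Psi$ that is a near-best $H^s$-approximation to $f\circ\kappa_i$ on the cube $\tilde C^{\,i}_{j,\xi}$ (Whitney/Bramble--Hilbert), so that by Cauchy--Schwarz, the normalization $\|\tilde\psi^\Gamma_{j,\xi}\circ\kappa_i\|_{L_2}\sim 1$ (from \link{eq:normalized}) and a scaling argument on a cube of side $\sim 2^{-j}$,
\begin{equation*}
	\abs{\distr{f\circ\kappa_i}{\tilde\psi^\Gamma_{j,\xi}\circ\kappa_i}_{L_2(\tilde C^{\,i}_{j,\xi})}}
	= \abs{\distr{f\circ\kappa_i-P}{\tilde\psi^\Gamma_{j,\xi}\circ\kappa_i}_{L_2(\tilde C^{\,i}_{j,\xi})}}
	\lesssim \norm{f\circ\kappa_i-P\sep L_2(\tilde C^{\,i}_{j,\xi})}
	\lesssim 2^{-js}\abs{f\circ\kappa_i}_{H^s(\tilde C^{\,i}_{j,\xi})},
\end{equation*}
and summing over $i=1,\dots,N$ gives \link{patchwise_bound}; the embedding $H^s\hookrightarrow L_2$ locally (needing $s>0$, which holds) makes the right-hand side finite exactly when it is assumed finite. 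The roles of $\psi$ and $\tilde\psi$, and of $D^\Psi$ and $\tilde D^\Psi$, are symmetric: $\psi^\Gamma_{j,\xi}$ is built from $\theta={_D}\theta$ which has $D$ vanishing moments — but one needs the \emph{dual} order here, so the correct statement uses $\tilde D^\Psi$; this swap is recorded in the last sentence of the lemma and the argument is otherwise identical with $C^{\,i}_{j,\xi}$ in place of $\tilde C^{\,i}_{j,\xi}$.

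\textbf{Main obstacle.} The genuine difficulty is not the Bramble--Hilbert/Whitney estimate, which is routine, but verifying that near the interfaces the three concrete constructions $\Psi_{\mathrm{DS}}$, $\Psi_{\mathrm{HS}}$, $\Psi_{\mathrm{CTU}}$ really do yield, on each patch, a uniformly bounded finite linear combination of level-$j$ tensor wavelets/scaling functions retaining support size $\sim 2^{-j}$ and the full dual moment order — i.e.\ that the ``gluing'' does not leak across more than finitely many patches, does not mix levels, and does not kill cancellation. This forces a case-by-case inspection of \cite{DS99,HS06,CTU99,CTU00}, which is why the proof is deferred to \autoref{sect:proofs}; the rest of the argument is uniform across the three families.
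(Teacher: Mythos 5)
Your overall strategy for \link{P3} (reduce to the reference cube via $\kappa_i$, subtract a polynomial of degree $<D^{\Psi}$, apply Cauchy--Schwarz and a Whitney/Bramble--Hilbert bound on a cube of side $\sim 2^{-j}$) is exactly the paper's argument for $\Psi_{\mathrm{HS}}$ and $\Psi_{\mathrm{CTU}}$: for those two constructions the patchwise cancellation $\distr{\tilde{\psi}^\Gamma_{j,\xi}\circ\kappa_i}{p_i}_{L_2([0,1]^d)}=0$ for all polynomials $p_i$ of degree $<D^{\Psi}$ is a documented feature (\cite[Prop.~4.1]{HS06} and Formula~(3.12) in \cite{CTU00}), and your computation then goes through. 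A minor quibble: the dual wavelets annihilate such polynomials because they are biorthogonal to the primal scaling functions, which are exact of order $D^{\Psi}$ --- not because $\tilde{\theta}$ is exact of order $\tilde{D}$; the latter is what gives the \emph{primal} wavelets their $\tilde{D}$ vanishing moments.

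The genuine gap is the case $\Psi=\Psi_{\mathrm{DS}}$. The Dahmen--Schneider composite wavelets do \emph{not} possess patchwise cancellation properties --- restoring them is precisely the purpose of the Harbrecht--Stevenson modification --- so the step ``$\tilde{\psi}^\Gamma_{j,\xi}\circ\kappa_i$ annihilates polynomials of degree $<D^{\Psi}$ on each patch'' fails there: the gluing across interfaces destroys the patchwise moment conditions, and only the global orthogonality $\tilde{\psi}^\Gamma_{j,\xi}\,\bot\, S(\Phi_j)$ survives. You correctly flag this verification as the ``main obstacle,'' but then write the proof as though the property holds for all three families. The paper's Step~1 replaces it by a different mechanism: one writes $\distr{f}{\tilde{\psi}^\Gamma_{j,\xi}}=\distr{(\Id-Q_j)(\Id-P_j^\Gamma)f}{\tilde{\psi}^\Gamma_{j,\xi}}$ with the biorthogonal projector $Q_j$ and an interpolation-type projector $P_j^\Gamma$ which (unlike the wavelet itself) \emph{is} patchwise representable as $(P_j^\square(f\circ\kappa_i))\circ\kappa_i^{-1}$ and reproduces polynomials of degree $<D^{\Psi}$ on the cube; the error $(\Id-P_j^\square)(f\circ\kappa_i-p_i)$ is then controlled through the explicit form of the functionals $\tilde{\Lambda}_j^\square$, which involve point evaluations at the boundary of the interval. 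This is also where the hypothesis $s>d/2$ genuinely enters (Sobolev embedding into continuous functions, and the $L_\infty$-based Whitney estimate), not merely through finiteness of the right-hand side as you suggest. In short: your argument is correct for $\Psi_{\mathrm{HS}}$ and $\Psi_{\mathrm{CTU}}$, but for $\Psi_{\mathrm{DS}}$ you need this projector detour (or an equivalent substitute), and without it the lemma remains unproved for one of the three constructions it covers.
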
	
Since particularly the property \link{P3} will be essential in the following, let us add some comments on it: 
Roughly speaking, the estimate \link{patchwise_bound} says that the expansion coefficients of functions $f$ on $\Gamma$ which correspond to wavelets $\psi_{j,\xi}^\Gamma$ supported on more than one patch~$\Gamma_i$ are bounded by the \emph{patchwise} (Sobolev) regularity of the pullbacks of $f$ to the unit cube. 
The lower bound $d/2$ for $s$ is due to the fact that the wavelet constructions under consideration incorporate terms which involve function evaluations at the interfaces, whereas the upper bound $D^{\Psi}$ is implied by the degree of polynomial exactness of the underlying scaling functions.

\subsection{Definition and properties of Besov-type function spaces}\label{sect:Besov}
Besov spaces essentially generalize the concept of Sobolev spaces. On $\R^d$ they are typically
defined using harmonic analysis, finite differences, moduli of smoothness, or interpolation. Characteristics such as embeddings, interpolation results, or approximation properties
of these scales then require deep proofs within the classical theory of function spaces.
Often they are obtained by reducing the assertion of interest to the level of sequences spaces
by means of characterizations in terms of building blocks (atoms, local means, quarks, or
wavelets). To mention at least a few references the interested reader is referred to the monographs
\cite{RS96, T06}, as well as to the articles \cite{DJP92, FJ90, KMM07}.

As outlined in \cite{DahWei2013}, the definition of Besov spaces on manifolds deserves some care: When following the usual approach based on local charts the smoothness of the spaces then is limited by the global regularity of the underlying manifold.
On the other hand, the theoretical analysis of adaptive algorithms for problems defined on (patchwise smooth) manifolds naturally requires higher-order smoothness spaces of Besov type.
Therefore, in \cite[Def.~4.1]{DahWei2013} we introduced a notion of Besov-type spaces $B_{\Psi,q}^\alpha(L_p(\Gamma))$ on specific two-dimensional closed manifolds such as boundaries of certain polyhedral domains $\Omega \subset\R^3$.
The definition was based on expansions w.r.t.\ special wavelets bases $\Psi$ that satisfy a number of additional properties; see \cite{DahWei2013} for details. 
Now we are going to extend this definition to a much more general setting: Besides enlarging the range of admissible parameters, here we significantly weaken the assumptions on the underlying set $\Gamma$, as well as on the wavelet bases used in the construction of the spaces.
As illustrated by \autoref{ex:besov_spaces} below, the subsequent definition thus covers a wide variety of function spaces on bounded or unbounded, smooth or non-smooth domains and manifolds of arbitrary dimension.

\begin{defi}\label{def:Bspq}
	Let $\Psi=(\Psi^\Gamma,\tilde{\Psi}^\Gamma)$ denote any (bi-)orthogonal wavelet Riesz basis for $L_2(\Gamma)$ w.r.t.\ some inner product $\distr{\cdot}{\cdot}$ which is indexed by a multiscale grid $\nabla^{\Psi}=\left(\nabla_j^\Psi \right)_{j\in\N_0}$ of dimension $d\in\N$ for $\Gamma$. Then
	\begin{itemize}
	 	\item[(i)] the tuple $(\alpha,p,q)$ is said to be \emph{admissible} if 
	 	\begin{equation*}
			0 < p \begin{cases}
	 				< \infty & \quad \text{when } \nabla^{\Psi} \text{ satisfies } \link{A4a},\\
	 				\leq 2 & \quad \text{when } \nabla^{\Psi} \text{ satisfies } \link{A4b},
	 		\end{cases}
	 	\end{equation*}
	and if one of the following conditions applies:
	\begin{itemize}
		\item[$\bullet$)] $\alpha > d \cdot \max\!\left\{0, \frac{1}{p} - \frac{1}{2} \right\}$ \quad and \quad $0 < q \leq \infty$,
		\item[$\bullet$)] $\alpha = d \cdot \max\!\left\{0, \frac{1}{p} - \frac{1}{2} \right\}$ \quad and \quad $0 < q \leq 2$.
	\end{itemize}
\item[(ii)]
	for any admissible parameter tuple $(\alpha,p,q)$ let $B_{\Psi,q}^\alpha(L_p(\Gamma))$ denote the collection of all complex-valued functions $u\in L_2(\Gamma)$ such that the (quasi-)norm defined by
	\begin{equation*}
		\norm{u \sep B_{\Psi,q}^\alpha(L_p(\Gamma))}
		:= \norm{ \left( \distr{u}{\tilde{\psi}^{\Gamma}_{j,\xi}} \right)_{(j,\xi)\in\nabla^{\Psi}} \sep b^\alpha_{p,q}\!\left(\nabla^{\Psi}\right)}
	\end{equation*}
	is finite. Therein the sequence space $b^\alpha_{p,q}(\nabla^{\Psi})$ is defined as in \autoref{def:b}.
	\end{itemize}
\end{defi}

Some comments are in order.

\begin{remark}\label{rem:B}
Observe that due to our assumptions every $u\in L_2(\Gamma)$ admits a unique expansion w.r.t.\ the primal wavelet system $\Psi^\Gamma$, where the corresponding sequence of coefficients belongs to $\ell_2(\nabla^{\Psi})=b_{2,2}^0(\nabla^{\Psi})$; cf.\ \link{eq:expansion} and \link{eq:norm_equiv}.
On the other hand, \autoref{prop:seq-embedding} implies that $b^\alpha_{p,q}(\nabla^{\Psi}) \hookrightarrow \ell_2(\nabla^{\Psi})$ for all admissible parameter tuples. Therefore every function with finite $B_{\Psi,q}^\alpha(L_p(\Gamma))$-quasi-norm belongs to $L_2(\Gamma)$. In fact, we have $B_{\Psi,q}^\alpha(L_p(\Gamma)) \hookrightarrow L_2(\Gamma)$; also compare \autoref{prop:standard_emb} below.
\hfill$\square$
\end{remark}

\begin{example}\label{ex:besov_spaces}
Let us illustrate the flexibility of \autoref{def:Bspq} by means of the some examples:
\begin{itemize}
	\item[(i)] Most importantly, \autoref{def:Bspq} covers spaces on $d$-dimensional manifolds which are patchwise smooth in the sense of \autoref{sect:domains}.
As exposed in \autoref{sect:MRA_wavelets}, in this setting biorthogonality of functions on $\Gamma$ usually is realized w.r.t.\ the patchwise inner product~\link{eq:inner_prod}. Suitable wavelet systems are given by $\Psi=(\Psi^\Gamma,\tilde{\Psi}^\Gamma) \in \{\Psi_{\mathrm{DS}}, \Psi_{\mathrm{HS}}, \Psi_{\mathrm{CTU}}\}$, as constructed in \cite{DS99}, \cite{HS06}, and \cite{CTU99,CTU00}, respectively.
Note that then no restriction on $p$ is imposed as decomposable domains \link{eq:manifold} are assumed to be bounded; cf.\ \link{A4a}.
	\item[(ii)] Assume that $\Gamma$ denotes a compact $d$-dimensional $\mathcal{C}^\infty$ manifold. 
		Then Besov spaces $B^\alpha_{p,q}(\Gamma)$ of arbitrary smoothness are well-defined by lifting spaces of distributions on $\R^d$ to $\Gamma$ using local charts together with an (overlapping) resolution of unity; see, e.g., \cite[Def.~5.1]{T08}. 
		Without going into details, we state that (for the range of admissible parameter tuples) these spaces coincide with our spaces $B_{\Psi,q}^\alpha(L_p(\Gamma))$ introduced in \autoref{def:Bspq}, provided that the wavelet system under consideration satisfies additional requirements; cf.\ \cite[Prop.~5.32]{T08}. 
		For an elaborate discussion we refer to \cite[Ch.~5]{T08}.
	\item[(iii)] Finally, note that also classical Besov function spaces $B^\alpha_{p,q}(\R^d)$ are covered. For this purpose, we may take a system of Daubechies wavelets which forms an orthogonal basis w.r.t.\ the canonical inner product on $L_2(\R^d)$. The coincidence of our \autoref{def:Bspq} with the usual definition based on Fourier techniques then is shown, e.g., in \cite[Thm.~1.64]{T06}; see also the proof of \autoref{prop:seq-embedding} in \autoref{sect:proofs}.
	\hfill$\square$
\end{itemize}
\end{example}

In the remainder of this section we briefly describe a couple of properties satisfied by the scale of function spaces just introduced which yield attractive implications for practical applications, e.g., in the context of regularity studies of operator equations. 
For details we again refer to \cite{DahWei2013}.
To begin with, we note that (formally) the spaces constructed in \autoref{def:Bspq} depend on the concrete choice of the wavelet basis $\Psi$. 
As already mentioned, in \autoref{sect:change} below we will show that under quite natural conditions all wavelet systems under consideration actually lead to the same Besov-type spaces.

From the properties of the sequence spaces $b^\alpha_{p,q}(\nabla^{\Psi})$ it immediately follows that all spaces $B_{\Psi,q}^\alpha(L_p(\Gamma))$ are quasi-Banach spaces; cf.\ \autoref{rem:props_b}. Moreover, they are Banach spaces if and only if $\min\{p,q\}\geq 1$ and Hilbert spaces if and only if $p=q=2$.
In fact, for small smoothness parameters $\alpha=s \in [0,\min\{3/2, s_{\Gamma}\})$ and $p=q=2$ our Besov-type spaces coincide with the classical Sobolev Hilbert spaces $H^s(\Gamma)$ (in the sense of equivalent norms), 
provided that a suitable wavelet basis is used; see, e.g., \cite[Thm.~4.6.1]{DS99}, or \cite[Cor.~5.7]{CTU99}.
Here the number $s_\Gamma$ is related to the smoothness of the underlying manifold $\Gamma$.
The coincidence then simply follows from the fact that the right-hand side of \link{eq:norm_equivalence_sob} equals $\norm{( \langle u, \tilde{\psi}^{\Gamma}_{j,\xi}\rangle )_{(j,\xi)\in\nabla^{\Psi}} \sep b^s_{2,2}\!\left(\nabla^{\Psi}\right)}$ which in turn defines the norm of $u$ in $B_{\Psi,2}^s(L_2(\Gamma))$.

Furthermore, \autoref{prop:seq-embedding} (together with \autoref{rem:B}) implies the subsequent characterization of embeddings between Besov-type spaces which is listed here for the sake of completeness.

\begin{corollary}[Standard embeddings]\label{prop:standard_emb}
Choose $\Psi$, $\nabla^{\Psi}$, and $\Gamma$ as in \autoref{def:Bspq}. Moreover, for $\alpha,\gamma \in \R$, let $(\alpha+\gamma,p_0,q_0)$ and $(\alpha,p_1,q_1)$ denote admissible parameter tuples.
If $\Gamma$ is bounded (i.e., $\nabla^{\Psi}$ satisfies \link{A4a}) then we have the continuous embedding
\begin{equation*}
	B_{\Psi,q_0}^{\alpha+\gamma}(L_{p_0}(\Gamma)) \hookrightarrow B_{\Psi,q_1}^{\alpha}(L_{p_1}(\Gamma))
\end{equation*}
if and only if one of the following conditions applies:
\begin{itemize}
	\item[$\bullet$)] $\gamma > d \cdot \max\!\left\{0,\frac{1}{p_0} - \frac{1}{p_1}\right\}$,
	\item[$\bullet$)] $\gamma = d \cdot \max\!\left\{0,\frac{1}{p_0} - \frac{1}{p_1}\right\}$ \quad and \quad $q_0\leq q_1$.
\end{itemize}
Furthermore, if $\Gamma$ is unbounded (i.e., $\nabla^{\Psi}$ satisfies \link{A4b} instead of \link{A4a}) then a corresponding characterization holds true with the additional condition $p_0\leq p_1$.
\end{corollary}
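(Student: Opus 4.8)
The plan is to reduce \autoref{prop:standard_emb} directly to \autoref{prop:seq-embedding} via the isometry built into \autoref{def:Bspq}. The starting point is the observation that, by definition, the coefficient map
\begin{equation*}
	\mathcal{C}_\Psi \colon u \longmapsto \left( \distr{u}{\tilde{\psi}^{\Gamma}_{j,\xi}} \right)_{(j,\xi)\in\nabla^{\Psi}}
\end{equation*}
is an isometry from $B_{\Psi,q}^\alpha(L_p(\Gamma))$ into $b^\alpha_{p,q}(\nabla^{\Psi})$ for every admissible tuple $(\alpha,p,q)$, and — crucially — it is the \emph{same} map for all parameter choices, since $\Psi$ (hence $\nabla^{\Psi}$ and the dual system $\tilde{\Psi}^\Gamma$) is fixed. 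Therefore, for $u\in B_{\Psi,q_0}^{\alpha+\gamma}(L_{p_0}(\Gamma))$ we have $\mathcal{C}_\Psi u\in b^{\alpha+\gamma}_{p_0,q_0}(\nabla^{\Psi})$, and the target quasi-norm is literally $\norm{\mathcal{C}_\Psi u \sep b^{\alpha}_{p_1,q_1}(\nabla^{\Psi})}$. So the function-space embedding $B_{\Psi,q_0}^{\alpha+\gamma}(L_{p_0}(\Gamma)) \hookrightarrow B_{\Psi,q_1}^{\alpha}(L_{p_1}(\Gamma))$ holds (as a set inclusion, and then automatically continuously) if and only if the sequence-space embedding $b^{\alpha+\gamma}_{p_0,q_0}(\nabla^{\Psi}) \hookrightarrow b^{\alpha}_{p_1,q_1}(\nabla^{\Psi})$ holds — provided one checks that $\mathcal{C}_\Psi$ is onto the relevant sequence spaces, which follows because $\Psi$ is a Riesz basis: given any $\bm{a}\in b^{\alpha}_{p_1,q_1}(\nabla^{\Psi})\subseteq\ell_2(\nabla^{\Psi})$ (the inclusion by \autoref{prop:seq-embedding} since $(\alpha,p_1,q_1)$ is admissible, cf.\ \autoref{rem:B}), the function $u:=\sum_{(j,\xi)\in\nabla^{\Psi}} a_{(j,\xi)}\,\psi^{\Gamma}_{j,\xi}$ lies in $L_2(\Gamma)$ and, by biorthogonality, has $\mathcal{C}_\Psi u = \bm{a}$.

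Next I would invoke \autoref{prop:seq-embedding} with $p_0,p_1,q_0,q_1$ as given and the same $\alpha,\gamma$. Since $\Gamma$ is bounded, $\nabla^{\Psi}$ satisfies \link{A4a} (this is exactly the remark after \autoref{ass:struct}), so \autoref{prop:seq-embedding} tells us the sequence embedding exists as a set inclusion iff it is continuous iff $\gamma > d\cdot\max\{0,1/p_0-1/p_1\}$, or $\gamma = d\cdot\max\{0,1/p_0-1/p_1\}$ together with $q_0\leq q_1$. Transporting this through the isometry $\mathcal{C}_\Psi$ yields precisely the stated dichotomy for the function spaces. For the unbounded case, $\nabla^{\Psi}$ satisfies \link{A4b} instead, and \autoref{prop:seq-embedding} then gives the same two conditions with the extra requirement $p_0\leq p_1$; the same transport argument applies verbatim. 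One should note here that admissibility in \autoref{def:Bspq} forces $p_0,p_1\leq 2$ in the unbounded case, but this does not interfere with the argument — it simply restricts the range of tuples for which the statement is asserted.

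The only genuinely non-routine point — and the one I would treat most carefully — is the equivalence "set-theoretic inclusion $\Leftrightarrow$ continuous embedding" at the level of the function spaces. On the sequence-space side this is already packaged into \autoref{prop:seq-embedding}; on the function-space side it then comes for free because $\mathcal{C}_\Psi$ is a norm-preserving bijection between $B_{\Psi,q}^\alpha(L_p(\Gamma))$ and its image $b^\alpha_{p,q}(\nabla^{\Psi})$, so boundedness of the inclusion operator is literally identical data on both sides. Hence no closed-graph or open-mapping argument is needed. In short, the proof is essentially a one-line transport of \autoref{prop:seq-embedding} along the canonical wavelet isometry, with the only care required being the verification that the coefficient maps for the two parameter regimes coincide (they do, because $\Psi$ is held fixed) and that surjectivity onto the sequence spaces holds (it does, via the Riesz basis property and \autoref{rem:B}).
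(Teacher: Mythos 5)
Your proposal is correct and is precisely the argument the paper intends: the corollary is stated there as an immediate consequence of \autoref{prop:seq-embedding} together with \autoref{rem:B}, i.e.\ a transport along the coefficient isometry built into \autoref{def:Bspq}, with the Riesz-basis synthesis supplying the surjectivity needed for the ``only if'' direction. Your write-up merely makes explicit the details the paper leaves implicit, so there is nothing to correct.
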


The embeddings stated in \autoref{prop:standard_emb} can be illustrated by \emph{DeVore-Triebel diagrams}; see \autoref{fig:DeVoreTriebel}. Therein the solid lines, starting from the point $(1/2; 0)$ which corresponds
to the space $L_2(\Gamma)=B^0_{\Psi,2}(L_2(\Gamma))$, describe the boundaries of the respective areas of admissible parameters; cf.\ \autoref{def:Bspq}(i). 
In both cases these areas are limited at the right-hand side by the so-called \emph{adaptivity scale of Besov-type spaces} $B^{\alpha_\tau}_{\Psi,\tau}(L_\tau(\Gamma))$, where $\tau$ and $\alpha_\tau$ are related via \link{eq:adapt}.
Moreover, the shaded regions refer to all spaces which are embedded into $H^s(\Gamma)=B^s_{\Psi,2}(L_2(\Gamma))$, whereas the arrows indicate limiting cases for possible embeddings of the space $B_{\Psi,q_0}^{\alpha+\gamma}(L_{p_0}(\Gamma))$. Restrictions imposed by the fine indices $q$ are not visualized.
\begin{figure}[h]
	\begin{center}
\begin{tikzpicture}
\draw [gray, thick] (1,3) -- (3.5,5.5);
\shade [left color=gray!75, right color=gray!10, shading angle=165] (1,3)--(1,5.5)--(3.5,5.5)--(1,3);
\shade [left color=gray!75, right color=gray!10, shading angle=195] (1,3)--(1,5.5)--(-1,5.5)--(-1,3)--(1,3);

\draw [->,>=stealth'] (-1,-0.2) -- (-1,5.7) node[above] {$\alpha$};
\draw [->,>=stealth'] (-1.2,0) -- (5,0) node[right] {$\frac{1}{p}$};

\draw [thick] (1,0) -- (5,4);
\draw [very thick] (-1,0) -- (1,0);
\draw [dotted] (1,0) -- (1,5.5);
\fill (1,0) circle (2pt) node[anchor=south east] {$L_2$};

\draw (-1.3,3) node {$s$};
\draw [gray] (-1.0,3) -- (1,3);
\draw [dashed] (-1.05,3) -- (-0.95,3);
\draw [dashed] (1,0) -- (1,3);
\draw (1,0) -- (1,-0.05);
\draw (1,-0.5) node {$\frac{1}{2}$};
\fill (1,3) circle (2pt) node[anchor=north east] {$H^{s}$};
\draw (-1.05,4.5) node[anchor=east] {$s_{\Gamma}$} -- (-0.95,4.5);

\draw (-1.3,3.5) node[anchor=east, left=-0.2] {$\alpha+\gamma$};
\draw [dashed] (-1.05,3.5) -- (3,3.5);
\fill (3,3.5) circle (2pt) node[anchor=south, above right=0.05 and -0.45] {$B^{\alpha+\gamma}_{\Psi,q_0}(L_{p_0})$};
\draw [dashed] (3,-0.05) -- (3,0.7);
\draw [dashed] (3,1.4) -- (3,3.5);
\draw (3,-0.5) node {$\frac{1}{p_0}$};

\draw [->,>=stealth'] (3,3.5) -- (4,3.5);
\draw [dotted] (4,3.5)--(4.5,3.5);
\draw [->,>=stealth'] (3,3.5) -- (2,2.5);
\draw [dotted] (2,2.5)--(-0.5,0);

\draw (-1.3,1) node {$\alpha_\tau$};
\draw [dashed] (-1.05,1.1) -- (2.1,1.1);
\fill (2.1,1.1) circle (2pt) node[anchor=west, right=0.1] {$B^{\alpha_\tau}_{\Psi,\tau}(L_\tau)$};
\draw [dashed] (2.1,-0.05) -- (2.1,1.1);
\draw (2.1,-0.5) node {$\frac{1}{\tau}$};
\end{tikzpicture}
		\hfill
\begin{tikzpicture}
\draw [->,>=stealth'] (0,-0.2) -- (0,5.7) node[above] {$\alpha$};
\draw [->,>=stealth'] (-0.2,0) -- (5,0) node[right] {$\frac{1}{p}$};

\draw [gray, thick] (1,3) -- (3.5,5.5);
\shade [left color=gray!75, right color=gray!10, shading angle=165] (1,3)--(1,5.5)--(3.5,5.5)--(1,3);

\draw [thick] (1,0) -- (5,4);
\draw [very thick] (1,0) -- (1,5.5);
\fill (1,0) circle (2pt) node[anchor=south east] {$L_2$};

\draw (-0.3,3) node {$s$};
\draw [dashed] (-0.05,3) -- (1,3);
\draw (1,0) -- (1,-0.05);
\draw (1,-0.5) node {$\frac{1}{2}$};
\fill (1,3) circle (2pt) node[anchor=north east] {$H^{s}$};
\draw (-0.05,4.5) node[anchor=east] {$s_\Gamma$} -- (0.05,4.5);

\draw (-0.3,3.5) node[anchor=east, left=-0.2] {$\alpha+\gamma$};
\draw [dashed] (-0.05,3.5) -- (3,3.5);
\fill (3,3.5) circle (2pt) node[anchor=south, above right=0.05 and -0.45] {$B^{\alpha+\gamma}_{\Psi,q_0}(L_{p_0})$};
\draw [dashed] (3,-0.05) -- (3,0.7);
\draw [dashed] (3,1.4) -- (3,2);
\draw (3,-0.5) node {$\frac{1}{p_0}$};

\draw [->,>=stealth'] (3,3.5) -- (3,2.5);
\draw [dotted] (3,2.5)--(3,2);
\draw [->,>=stealth'] (3,3.5) -- (2,2.5);
\draw [dotted] (2,2.5)--(1,1.5);

\draw (-0.3,1) node {$\alpha_\tau$};
\draw [dashed] (-0.05,1.1) -- (2.1,1.1);
\fill (2.1,1.1) circle (2pt) node[anchor=west, right=0.1] {$B^{\alpha_{\tau}}_{\Psi,\tau}(L_\tau)$};
\draw [dashed] (2.1,-0.05) -- (2.1,1.1);
\draw (2.1,-0.5) node {$\frac{1}{\tau}$};
\end{tikzpicture}
	\end{center}
\caption{Embeddings for Besov-type spaces $B^\alpha_{\Psi,q}(L_p(\Gamma))$ on bounded (left) and on unbounded (right) domains or manifolds $\Gamma$, resp.; cf. \autoref{prop:standard_emb}.}\label{fig:DeVoreTriebel}
\end{figure}
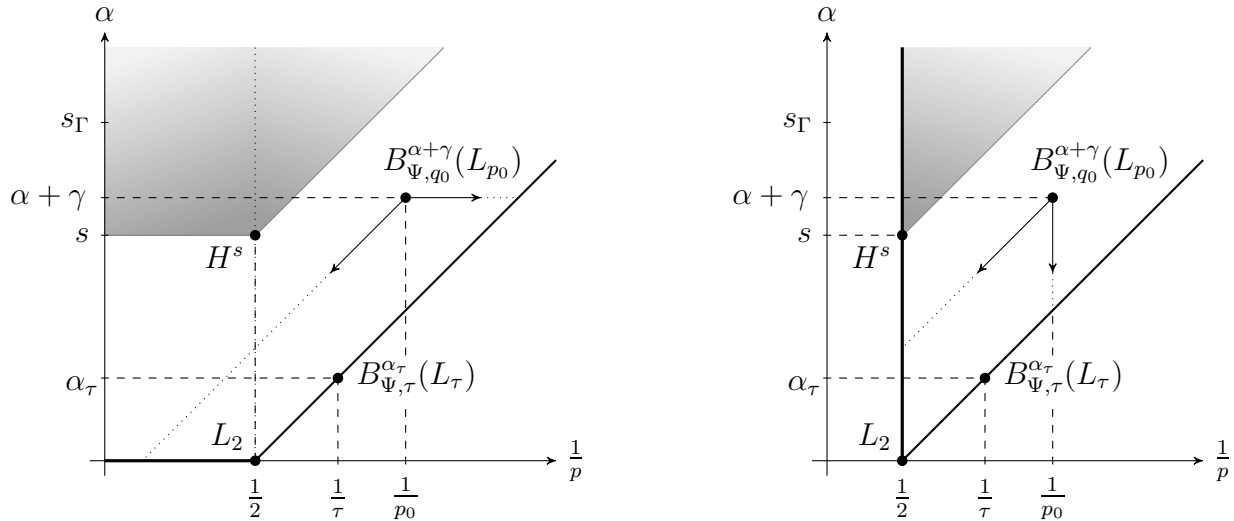

When it comes to applications on bounded domains or manifolds, approximation properties such as best $n$-term rates are of particular interest. 
Without going into details, let us recall that roughly speaking the numbers $\sigma_n(F;\B,G)$, $n\in\N_0$, describe the minimal error of approximating the embedding $F\hookrightarrow G$ by means of finite linear combinations of elements from the dictionary $\B$. 
For an exact definition we refer to \cite[Sect.~4.2]{DahWei2013}. 
There also a proof (based on results shown in \cite{DNS06,HanSic2011}) of the next proposition for $d=2$ can be found. The arguments easily carry over to the general case discussed here.
\begin{prop}[Best $n$-term approximation on bounded manifolds]\label{prop:nterm}
	Choose $\Psi$, $\nabla^{\Psi}$, and $\Gamma$ as in \autoref{def:Bspq} and assume $\Gamma$ to be bounded, i.e., suppose that $\nabla^\Psi$ satisfies \link{A4a}. Moreover, for $\alpha,\gamma \in \R$, let $(\alpha+\gamma,p_0,q_0)$ and $(\alpha,p_1,q_1)$ denote admissible parameter tuples. 
	Then
\begin{itemize}
	\item[$\bullet$)] $\gamma > d \cdot \max\!\left\{0,\frac{1}{p_0} - \frac{1}{p_1}\right\}$ implies
	\begin{equation*}
		\sigma_n \!\left( B_{\Psi,q_0}^{\alpha+\gamma}(L_{p_0}(\Gamma));\Psi^{\Gamma},B_{\Psi,q_1}^\alpha(L_{p_1}(\Gamma)) \right) 
		\sim n^{-\gamma/d},
	\end{equation*}
\item[$\bullet$)] $\gamma = d \cdot \max\!\left\{0,\frac{1}{p_0} - \frac{1}{p_1}\right\}$ and $q_0 \leq q_1$ implies
	\begin{equation*}
		\sigma_n \!\left( B_{\Psi,q_0}^{\alpha+\gamma}(L_{p_0}(\Gamma));\Psi^{\Gamma},B_{\Psi,q_1}^\alpha(L_{p_1}(\Gamma)) \right) 
		\sim n^{-\min\{\gamma/d,\, 1/q_0-1/q_1\}}. 
	\end{equation*}
	\end{itemize}	
\end{prop}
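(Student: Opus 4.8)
The plan is to transfer the whole statement to the sequence-space side, where it becomes the classical problem of nonlinear ($n$-term) approximation in Besov sequence spaces, and then to run the standard arguments, which use nothing about $\nabla^\Psi$ beyond the cardinality condition \link{A4a}. To make the reduction precise: by \autoref{def:Bspq} the coefficient map $T\colon u\mapsto\left(\distr{u}{\tilde\psi^\Gamma_{j,\xi}}\right)_{(j,\xi)\in\nabla^\Psi}$ is an isometry of $B^\beta_{\Psi,s}(L_r(\Gamma))$ onto $b^\beta_{r,s}(\nabla^\Psi)$ for every admissible tuple $(\beta,r,s)$; it is onto because $\Psi^\Gamma,\tilde\Psi^\Gamma$ form Riesz bases (cf.\ \link{eq:norm_equiv}) and because $b^\beta_{r,s}(\nabla^\Psi)\hookrightarrow\ell_2(\nabla^\Psi)$ by \autoref{prop:seq-embedding} (see \autoref{rem:B}). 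Biorthogonality of the two systems gives: if $g=\sum_{(j,\xi)\in S}d_{(j,\xi)}\psi^\Gamma_{j,\xi}$ with $\#S\le n$, then $Tg$ is supported on $S$ with entries $d_{(j,\xi)}$, so for a fixed $u$ the minimum of $\norm{u-g\sep B^\alpha_{\Psi,q_1}(L_{p_1}(\Gamma))}$ over all $n$-term $g$ equals the best $n$-term approximation of $Tu$ by the canonical basis $\{e_{(j,\xi)}\}$ in $b^\alpha_{p_1,q_1}(\nabla^\Psi)$. Since $Tu$ exhausts the unit ball of $b^{\alpha+\gamma}_{p_0,q_0}(\nabla^\Psi)$ as $u$ exhausts that of the source space, the proposition is equivalent to the two-sided estimate for
\[
	\sigma_n\!\left(b^{\alpha+\gamma}_{p_0,q_0}(\nabla);\{e_{(j,\xi)}\},b^\alpha_{p_1,q_1}(\nabla)\right)
\]
over an arbitrary multiscale grid $\nabla$ obeying \link{A4a}; note that admissibility of both tuples already forces $b^{\alpha+\gamma}_{p_0,q_0}(\nabla)\hookrightarrow b^\alpha_{p_1,q_1}(\nabla)$ via \autoref{prop:seq-embedding}, so these quantities are finite.

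For the sequence-space estimate I would first scale out $\alpha$: the diagonal isometry $a_{(j,\xi)}\mapsto 2^{j\alpha}a_{(j,\xi)}$ maps $b^{\alpha+\gamma}_{p_0,q_0}(\nabla)$ onto $b^\gamma_{p_0,q_0}(\nabla)$ and $b^\alpha_{p_1,q_1}(\nabla)$ onto $b^0_{p_1,q_1}(\nabla)$ and commutes with thresholding, so one may assume $\alpha=0$. The upper (Jackson) bound then follows from a scale-wise greedy strategy: split a budget of $\sim n$ retained indices among the scales $j$, keep the largest coefficients on each scale, bound the remaining $\ell_{p_1}$-mass on scale $j$ by Stechkin's lemma when $p_0<p_1$ and directly (using $\#\nabla_j\lesssim2^{dj}$) when $p_0\ge p_1$, and optimize the allocation against the $\ell_{q_0}$-summability of the scale norms encoded in $\norm{\cdot\sep b^\gamma_{p_0,q_0}(\nabla)}$; the relation $\#\nabla_j\sim2^{dj}$ is exactly what turns the budget $\sim n$ into the gain $n^{-\gamma/d}$, resp.\ $n^{-(1/q_0-1/q_1)}$ on the boundary line. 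The matching lower bound is obtained by plugging into $\sigma_n$ explicit test sequences that fill $\gtrsim2^{dj}$ indices on the relevant scales — concentrated on a single scale $J$ with $2^{dJ}\sim n$ when $\gamma>d\max\{0,1/p_0-1/p_1\}$, and spread over a growing band of scales in the boundary case — and computing their best $n$-term error by hand. Both halves are the standard arguments for Besov sequence spaces and use no property of $\nabla$ besides $\#\nabla_j\sim2^{dj}$; the $d=2$ version is carried out in \cite[Sect.~4.2]{DahWei2013} on the basis of \cite{DNS06,HanSic2011}, and the passage to general $d$ is purely notational.

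I expect the only genuinely delicate point to be the boundary case $\gamma=d\max\{0,1/p_0-1/p_1\}$ together with $q_0\le q_1$: there the rate $n^{-\min\{\gamma/d,\,1/q_0-1/q_1\}}$ is not produced by a single Stechkin estimate but by balancing the within-scale gain (the embedding $\ell_{p_0}\hookrightarrow\ell_{p_1}$) against the across-scale gain (the embedding $\ell_{q_0}\hookrightarrow\ell_{q_1}$) in the budget allocation, and then by constructing lower-bound sequences that saturate whichever of these two mechanisms is the bottleneck. The interior case $\gamma>d\max\{0,1/p_0-1/p_1\}$, by contrast, is routine once $\alpha$ has been scaled away, since there the fine indices play no role and a crude split of the coefficient budget already suffices.
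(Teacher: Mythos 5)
Your proposal is correct and takes essentially the same route as the paper, whose own ``proof'' consists of reducing the statement to the sequence-space level and pointing to \cite[Sect.~4.2]{DahWei2013} (which rests on \cite{DNS06,HanSic2011}) for the case $d=2$, with the remark that the arguments carry over verbatim to general $d$ since only $\#\nabla_j\sim 2^{dj}$ is used. One tiny caveat: admissibility of the two tuples alone does not force $b^{\alpha+\gamma}_{p_0,q_0}(\nabla)\hookrightarrow b^{\alpha}_{p_1,q_1}(\nabla)$ --- it is the stated conditions on $\gamma$ in the two bullets (which coincide exactly with the embedding conditions of \autoref{prop:seq-embedding}) that guarantee finiteness of $\sigma_n$.
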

\begin{remark}
Note that (using results stated in \cite{HanSic2011}) a corresponding characterization (but with quite different rates of convergence!) can be derived easily also for spaces on unbounded sets~$\Gamma$. 
As for applications bounded domains or manifolds, respectively, are much more important, we will not follow this line of research here.
Indeed, as exposed already in the introduction, the quantity $\sigma_n \!\left( B_{\Psi,\tau}^{\alpha_\tau}(L_{\tau}(\Gamma));\Psi^{\Gamma}, B_{\Psi,2}^0(L_{2}(\Gamma)) \right)$ with $(\alpha_\tau,\tau)$ as in \link{eq:adapt} serves as a benchmark for the performance of (ideal) adaptive algorithms that use at most $n$ wavelets from the dictionary $\Psi^{\Gamma}$ and provide an approximation in the norm of  $L_2(\Gamma)=B_{\Psi,2}^0(L_{2}(\Gamma))$.
The reason is that, on the one hand, due to \autoref{prop:nterm}, the best $n$-term approximation rates linearly depend on the difference in smoothness and, on the other hand, the spaces $B_{\Psi,\tau}^{\alpha_\tau}(L_{\tau}(\Gamma))$ from the adaptivity scale \link{eq:adapt} provide the weakest norms among all (Besov-type) spaces of fixed regularity which are contained in $L_2(\Gamma)$; cf.\ \autoref{prop:standard_emb}.
\hfill$\square$
\end{remark}

Finally, besides many other interesting properties which are typical for classical Besov spaces (e.g., defined via harmonic analysis), our scale of Besov-type spaces $B_{\Psi,q}^\alpha(L_p(\Gamma))$ satisfies the following interpolation assertions w.r.t\ the \emph{real} and the (extended) \emph{complex method} which we denote by $(\cdot,\cdot)_{\Theta,q}$ and $[\cdot,\cdot]_{\Theta}$, respectively. 
For a comprehensive treatment of interpolation of (quasi-)Banach spaces we refer to \cite{BL76, KMM07, T83} and to the references therein.
\begin{prop}[Interpolation]
	Choose $\Psi$, $\nabla^{\Psi}$, and $\Gamma$ as in \autoref{def:Bspq} and let $(\alpha_0,p_0,q_0)$ and $(\alpha_1,p_1,q_1)$ denote admissible parameter tuples.
	For $0<\Theta<1$ we set
	\begin{equation*}
		s_{\Theta} 
		:= (1-\Theta) \, \alpha_0 + \Theta \, \alpha_1, \qquad 
		\frac{1}{p_\Theta} 
		:= \frac{1-\Theta}{p_0} + \frac{\Theta}{p_1}, \qquad \text{and} \qquad
		\frac{1}{q_\Theta} 
		:= \frac{1-\Theta}{q_0} + \frac{\Theta}{q_1}.
	\end{equation*}
	\begin{itemize}
		\item[$\bullet$)] If $\alpha_0\neq \alpha_1$ and $p=p_0=p_1$ then for all $0<q\leq\infty$ and every $0<\Theta<1$ we have
			\begin{equation*}
				\Big( B_{\Psi,q_0}^{\alpha_0}(L_{p}(\Gamma)),\, B_{\Psi,q_1}^{\alpha_1}(L_{p}(\Gamma)) \Big)_{\Theta,q} 
				= B_{\Psi,q}^{s_\Theta}(L_{p}(\Gamma)).
			\end{equation*}
		\item[$\bullet$)] If $\min\{q_0,q_1\}<\infty$ then for all $0<\Theta<1$ it holds
			\begin{equation*}
				\Big[ B_{\Psi,q_0}^{\alpha_0}(L_{p_0}(\Gamma)), \, B_{\Psi,q_1}^{\alpha_1}(L_{p_1}(\Gamma)) \Big]_{\Theta} 
				= B_{\Psi,q_\Theta}^{s_\Theta}(L_{p_\Theta}(\Gamma)).
			\end{equation*}
	\end{itemize}
\end{prop}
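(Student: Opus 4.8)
The plan is to transport both identities from the function spaces to the associated Besov-type \emph{sequence} spaces and to settle them there by the classical interpolation theory for weighted sequence spaces.

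First I would note that, by \autoref{def:Bspq}(ii) combined with the wavelet expansion \link{eq:expansion} and the biorthogonality of $\Psi^{\Gamma}$ and $\tilde{\Psi}^{\Gamma}$, the analysis operator
\[
	A\colon u \longmapsto \left( \distr{u}{\tilde{\psi}^{\Gamma}_{j,\xi}} \right)_{(j,\xi)\in\nabla^{\Psi}}
	\qquad\text{and}\qquad
	S\colon \bm{a} \longmapsto \sum_{(j,\xi)\in\nabla^{\Psi}} a_{(j,\xi)}\,\psi^{\Gamma}_{j,\xi}
\]
are mutually inverse, norm-preserving isomorphisms between $B^{\alpha}_{\Psi,q}(L_p(\Gamma))$ and $b^{\alpha}_{p,q}(\nabla^{\Psi})$, for \emph{every} admissible tuple $(\alpha,p,q)$ (the series defining $S\bm{a}$ converges in $L_2(\Gamma)$ because $b^{\alpha}_{p,q}(\nabla^{\Psi})\hookrightarrow\ell_2(\nabla^{\Psi})$ and $\Psi^{\Gamma}$ is a Riesz basis). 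By the norm equivalence \link{eq:norm_equiv}, $A$ and $S$ also form an isomorphism between $L_2(\Gamma)$ and $\ell_2(\nabla^{\Psi})=b^0_{2,2}(\nabla^{\Psi})$, and by (the argument of) \autoref{rem:B} every admissible $B^{\alpha}_{\Psi,q}(L_p(\Gamma))$ sits continuously inside $L_2(\Gamma)$, every admissible $b^{\alpha}_{p,q}(\nabla^{\Psi})$ inside $\ell_2(\nabla^{\Psi})$. Thus $(A,S)$ is an isomorphism of the (quasi-)Banach couples $\big( B^{\alpha_0}_{\Psi,q_0}(L_{p_0}(\Gamma)), B^{\alpha_1}_{\Psi,q_1}(L_{p_1}(\Gamma)) \big)$ and $\big( b^{\alpha_0}_{p_0,q_0}(\nabla^{\Psi}), b^{\alpha_1}_{p_1,q_1}(\nabla^{\Psi}) \big)$; since every interpolation functor is stable under isomorphisms of couples (see \cite{BL76, T83}), it suffices to prove the two displayed identities with the function spaces replaced by the corresponding sequence spaces.

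For the sequence-space step I would use that, by \link{def:seq_norm}, the space $b^{\alpha}_{p,q}(\nabla)$ is \emph{isometrically} the weighted vector-valued sequence space $\ell_q\big( (w_j)_{j\in\N_0};\,(\ell_p(\nabla_j))_{j\in\N_0}\big)$ with scalar weight $w_j:=2^{j(\alpha+d[1/2-1/p])}$; the inner spaces $\ell_p(\nabla_j)$ depend on $\nabla$ and on $p$, but not on $\alpha$ or $q$. For the real method the hypotheses $p_0=p_1=:p$ and $\alpha_0\neq\alpha_1$ make the two weight sequences geometrically separated in $j$, so the classical $K$-functional computation for $\ell_q$-spaces with separated scalar weights — the vector-valued analogue of the scalar statement in \cite{BL76, T83}, which goes through verbatim since the inner spaces coincide in both entries of the couple — yields $\ell_q\big((2^{j(s_\Theta+d[1/2-1/p])})_j;(\ell_p(\nabla_j))_j\big)=b^{s_\Theta}_{p,q}(\nabla^{\Psi})$, the exponent being the convex combination $s_\Theta$. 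For the complex method I would combine the per-level Calderón identity $[\ell_{p_0}(\nabla_j),\ell_{p_1}(\nabla_j)]_\Theta=\ell_{p_\Theta}(\nabla_j)$ (with constants independent of $j$) with the mixed Calderón-type formula for complex interpolation of weighted $\ell_q(\cdot)$-spaces of vector-valued sequences; a short computation with $1/p_\Theta=(1-\Theta)/p_0+\Theta/p_1$ and $1/q_\Theta=(1-\Theta)/q_0+\Theta/q_1$ then identifies the outcome as $b^{s_\Theta}_{p_\Theta,q_\Theta}(\nabla^{\Psi})$. In either case the target tuple is admissible in the sense of \autoref{def:Bspq}(i): $s_\Theta$ is a convex combination of $\alpha_0,\alpha_1$, which by convexity of $t\mapsto\max\{0,t-1/2\}$ stays on or above the admissibility curve, and it reaches that curve only when both $\alpha_i$ do, in which case $q_0,q_1\le 2$ and hence $q_\Theta\le2$ too. (Alternatively, the reduction technique behind the proof of \autoref{prop:seq-embedding}, mentioned in the remark following it, reduces these sequence-space interpolation identities directly to classical ones for Besov sequence spaces; cf.\ \cite{T83, DNS06}.)

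The step that calls for genuine care — and the only real obstacle — is the complex-interpolation identity in the quasi-Banach range $\min\{p_i,q_i\}<1$, where the ordinary complex method has to be replaced by the \emph{extended} complex method of \cite{KMM07}. Here I would verify that each $b^{\alpha}_{p,q}(\nabla^{\Psi})$ is analytically convex — it carries a natural lattice quasi-norm inherited from $\ell_q$ and $\ell_p$ — so that the extended method applies and the Calderón formula persists, and that the couple is \emph{regular}; this last point is exactly where the hypothesis $\min\{q_0,q_1\}<\infty$ enters, as it guarantees density of the finitely supported sequences in $b^{\alpha_0}_{p_0,q_0}(\nabla^{\Psi})\cap b^{\alpha_1}_{p_1,q_1}(\nabla^{\Psi})$ and hence non-degeneracy of the interpolation space. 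Everything else is routine — stability of interpolation under the isomorphism $(A,S)$, the per-level $\ell_p$-interpolation, and the weighted-$\ell_q$ reduction — and for unbounded $\Gamma$ the restriction $p\le2$ in \autoref{def:Bspq}(i) keeps both couples inside $\ell_2(\nabla^{\Psi})$, so no part of the argument changes.
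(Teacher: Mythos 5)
Your argument is correct, and its first half --- transporting both identities to the sequence spaces $b^{\alpha}_{p,q}(\nabla^{\Psi})$ via the analysis/synthesis pair and the stability of interpolation functors under isomorphisms of couples --- is exactly the reduction the paper performs (by reference to \cite[Prop.~4.5, Rem.~6.3]{DahWei2013}). Where you diverge is in how the sequence-space identities are then settled: the paper takes one further reduction step, identifying $b^{\alpha}_{p,q}(\tilde{\nabla})$ with classical Besov spaces $B^{\alpha}_{p,q}(\R^d)$ through the wavelet isomorphism already used in the proof of \autoref{prop:seq-embedding}, and then simply cites \cite[Thm.~2.4.2(i)]{T83} for the real method and \cite[Thm.~9.1]{KMM07} for the extended complex method. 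You instead work directly with the representation of $b^{\alpha}_{p,q}(\nabla)$ as a weighted vector-valued space $\ell_q\bigl(w_j;\ell_p(\nabla_j)\bigr)$, running the $K$-functional computation with geometrically separated weights for the real method and the Calder\'on formula (per-level $\ell_p$-interpolation plus weighted-$\ell_q$ mixing) for the complex method. Your route is more self-contained at the sequence level and makes visible where the hypotheses $\alpha_0\neq\alpha_1$, $p_0=p_1$ and $\min\{q_0,q_1\}<\infty$ actually enter; the paper's route is shorter because it outsources precisely those verifications (including analytic convexity and regularity of the couple in the quasi-Banach range) to the cited theorems. Both ultimately rest on the same classical results, and your parenthetical remark already acknowledges the paper's alternative; there is no gap in what you propose.
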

\begin{proof}
As it has been shown in \cite[Prop.~4.5]{DahWei2013} (for the special case $d=2$), interpolation results for Besov-type spaces $B_{\Psi,q}^\alpha(L_p(\Gamma))$ can be reduced to corresponding assertions for sequence spaces which in turn follow from interpolation properties of (classical) Besov spaces $B^\alpha_{p,q}(\R^d)$ defined on the whole of $\R^d$. 
This type of arguments does not depend on the dimension and can be applied for all methods that fulfill the so-called interpolation property; cf.\ \cite[Rem.~6.3]{DahWei2013}. 
Thus, in our case it suffices to refer to \cite[Thm.~2.4.2(i)]{T83} for the real method and to \cite[Thm.~9.1]{KMM07} for the (extended) complex method, respectively.
\end{proof}

\section{Change of basis embeddings for Besov-type spaces}
\label{sect:change}
As outlined above, our \autoref{def:Bspq} of Besov-type spaces $B_{\Psi,q}^\alpha(L_p(\Gamma))$ formally depends on the concrete choice of the wavelet basis $\Psi$ and its construction parameters $D^{\Psi}$ and $\tilde{D}^{\Psi}$, respectively.
In order to find conditions which imply that different bases $\Psi$ and $\Phi$ generate the same Besov-type space $B_{\Psi,q}^\alpha(L_p(\Gamma)) = B_{\Phi,q}^\alpha(L_p(\Gamma))$ in the sense of equivalent (quasi-)norms, we now employ the theory of almost diagonal matrices developed in \autoref{sect:Seq} to investigate properties which yield corresponding one-sided \emph{change of basis embeddings}.

Note that, in general, different constructions of wavelet bases might accomplish (bi-) orthogonality w.r.t.\ different inner products. 
Indeed, depending on the desired properties we like to assemble, on patchwise smooth manifolds $\Gamma$, say, it is reasonable to construct wavelets which are biorthogonal with respect to $\distr{\cdot}{\cdot}$ as defined in \link{eq:inner_prod}, or w.r.t.\ the canonical scalar product $\distrmod{\cdot}{\cdot}$ on $L_2(\Gamma)$.
The following proposition addresses this issue, as it is stated in a quite general form.

\begin{prop}\label{prop:localization}
	Let $\Psi$ and $\Phi$ denote two wavelet Riesz bases for $L_2(\Gamma)$ which are (bi-) orthogonal w.r.t.\ the inner products $\distr{\cdot}{\cdot}$ and $\distrmod{\cdot}{\cdot}$, respectively. 
	Moreover, suppose that these bases are indexed by multiscale grids $\nabla^{\Psi}$ and $\nabla^\Phi$ 
	for $\Gamma$, respectively, and assume that for some admissible parameter tuple $(\alpha,p,q)$ the associated Gramian matrix satisfies
	\begin{equation}\label{cond:Gram}
		\M_{\Psi\nach\Phi} 
		:= \{m_{(j,\xi),(k,\eta)}\}_{(j,\xi)\in\nabla^{\Phi},(k,\eta)\in\nabla^{\Psi}}
		= \left\{ \distrmod{\psi_{k,\eta}^{\Gamma}}{\tilde{\phi}^{\Gamma}_{j,\xi} }  \right\}_{(j,\xi)\in\nabla^{\Phi},(k,\eta)\in\nabla^{\Psi}} \in\ad_{p}^{\alpha}.
	\end{equation}
	Then $B_{\Psi,q}^\alpha(L_p(\Gamma)) \hookrightarrow B_{\Phi,q}^\alpha(L_p(\Gamma))$.
\end{prop}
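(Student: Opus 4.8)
The plan is to reduce the function-space embedding to the operator statement of \autoref{thm:ad} via the coefficient isomorphisms furnished by \autoref{def:Bspq}. First I would take an arbitrary $u\in B_{\Psi,q}^\alpha(L_p(\Gamma))$ and expand it with respect to the primal system $\Psi^\Gamma$ as in \link{eq:expansion}; since $B_{\Psi,q}^\alpha(L_p(\Gamma))\hookrightarrow L_2(\Gamma)$ by \autoref{rem:B}, this expansion is legitimate and the coefficient sequence $\bm{a}:=(\distr{u}{\tilde\psi^\Gamma_{k,\eta}})_{(k,\eta)\in\nabla^{\Psi}}$ lies in $b^\alpha_{p,q}(\nabla^{\Psi})$ with $\norm{\bm{a}\sep b^\alpha_{p,q}(\nabla^{\Psi})}=\norm{u\sep B_{\Psi,q}^\alpha(L_p(\Gamma))}$. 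The goal is then to control the $\Phi$-coefficients $b_{(j,\xi)}:=\distrmod{u}{\tilde\phi^\Gamma_{j,\xi}}$ of the \emph{same} function $u$.

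The key computation is to insert the $\Psi$-expansion of $u$ into the bilinear form $\distrmod{\cdot}{\tilde\phi^\Gamma_{j,\xi}}$ and interchange sum and inner product — justified by the $L_2$-convergence of \link{eq:expansion} and continuity of $\distrmod{\cdot}{\cdot}$ — to obtain
\begin{equation*}
	b_{(j,\xi)}
	= \distrmod{\sum_{(k,\eta)\in\nabla^{\Psi}} a_{(k,\eta)}\,\psi^\Gamma_{k,\eta}}{\tilde\phi^\Gamma_{j,\xi}}
	= \sum_{(k,\eta)\in\nabla^{\Psi}} m_{(j,\xi),(k,\eta)}\, a_{(k,\eta)},
	\qquad (j,\xi)\in\nabla^{\Phi},
\end{equation*}
with $m_{(j,\xi),(k,\eta)}=\distrmod{\psi^\Gamma_{k,\eta}}{\tilde\phi^\Gamma_{j,\xi}}$ exactly the entries of the Gramian $\M_{\Psi\to\Phi}$ from \link{cond:Gram}. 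In other words, the $\Phi$-coefficient sequence $\bm{b}$ is the image $M\bm{a}$ of $\bm{a}$ under the operator induced by $\M_{\Psi\to\Phi}$. By hypothesis $\M_{\Psi\to\Phi}\in\ad_p^\alpha=\ad(b^\alpha_{p,q}(\nabla^{\Psi}),b^\alpha_{p,q}(\nabla^{\Phi}))$, so \autoref{thm:ad} gives a bounded linear operator $M\colon b^\alpha_{p,q}(\nabla^{\Psi})\to b^\alpha_{p,q}(\nabla^{\Phi})$, whence $\bm{b}=M\bm{a}\in b^\alpha_{p,q}(\nabla^{\Phi})$ with $\norm{\bm{b}\sep b^\alpha_{p,q}(\nabla^{\Phi})}\lesssim\norm{\bm{a}\sep b^\alpha_{p,q}(\nabla^{\Psi})}$. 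Unwinding the definitions via \autoref{def:Bspq}(ii), this reads $\norm{u\sep B_{\Phi,q}^\alpha(L_p(\Gamma))}\lesssim\norm{u\sep B_{\Psi,q}^\alpha(L_p(\Gamma))}$, which is precisely the claimed embedding.

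The one genuinely delicate point is the interchange of the infinite sum with the inner product $\distrmod{\cdot}{\tilde\phi^\Gamma_{j,\xi}}$: a priori \link{eq:expansion} converges only in $L_2(\Gamma)$, and one must check that $\distrmod{\cdot}{\tilde\phi^\Gamma_{j,\xi}}$ is continuous on $L_2(\Gamma)$ (true whenever $\distrmod{\cdot}{\cdot}$ is equivalent to the canonical inner product, so that the dual wavelets $\tilde\phi^\Gamma_{j,\xi}\in L_2(\Gamma)$ act as bounded functionals) — then continuity lets the functional pass through the $L_2$-limit termwise. Everything else is bookkeeping: matching the normalizations built into \link{def:seq_norm}, and noting the argument is independent of which admissible $(\alpha,p,q)$ was fixed. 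I would remark in passing that the statement is genuinely one-sided — reversing the roles of $\Psi$ and $\Phi$ requires the transposed Gramian $\M_{\Phi\to\Psi}$ to be almost diagonal as well, which is why the two-sided equivalence in \autoref{thm:equivalence} will need the cancellation/regularity estimates of \autoref{lem:cancel} applied in both directions.
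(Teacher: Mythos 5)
Your proposal is correct and follows essentially the same route as the paper: expand $u$ in the primal system $\Psi^\Gamma$, identify the $\Phi$-coefficient sequence as the image of the $\Psi$-coefficient sequence under the operator induced by the Gramian $\M_{\Psi\nach\Phi}$, and invoke \autoref{thm:ad} to conclude boundedness on $b^\alpha_{p,q}$. Your explicit justification of interchanging the sum with $\distrmod{\cdot}{\tilde{\phi}^\Gamma_{j,\xi}}$ via $L_2$-convergence and continuity of the functional is a point the paper treats implicitly, but it is the same argument.
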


\begin{proof}
We essentially follow the lines of the proof of \cite[Thm.~3.7]{FJ90}.
By definition, every $u\in B_{\Psi,q}^\alpha(L_p(\Gamma))$ can be expanded into
\begin{equation*}
	u 
	= \sum_{k\in\N_0} \sum_{\eta\in\nabla_k^\Psi} a_{(k,\eta)} \, \psi_{k,\eta}^{\Gamma}
	\quad \text{with} \quad
	\bm{a} 
	:= \left( a_{(k,\eta)} \right)_{(k,\eta)\in\nabla^\Psi} 
	= \left( \distr{u}{\tilde{\psi}_{k,\eta}^{\Gamma}} \right)_{(k,\eta)\in\nabla^\Psi}
	\in b^\alpha_{p,q}(\nabla^\Psi).
\end{equation*}
Note that, since $B_{\Psi,q}^\alpha(L_p(\Gamma)) \hookrightarrow L_2(\Gamma)$ and $\Phi=( \Phi^{\Gamma}, \tilde{\Phi}^{\Gamma} )$ is a $\distrmod{\cdot}{\cdot}$-biorthogonal Riesz basis for $L_2(\Gamma)$, the sequence $\tilde{\bm{a}} := \left(\tilde{a}_{(j,\xi)}\right)_{(j,\xi)\in\nabla^\Phi} := \left( \distrmod{u}{\tilde{\phi}^{\Gamma}_{j,\xi}} \right)_{(j,\xi)\in\nabla^\Phi}$ is well-defined. 
Moreover, it holds $\tilde{\bm{a}} = M_{\Psi\nach\Phi} \bm{a}$. 
That is, for all $j\in\N_0$ and $\xi\in\nabla_j^\Phi$, we have
\begin{align*}
	\tilde{a}_{(j,\xi)}
	=  \distrmod{\sum_{k\in\N_0} \sum_{\eta\in\nabla_k^\Psi} a_{(k,\eta)} \, \psi_{k,\eta}^{\Gamma}}{\,\tilde{\phi}^{\Gamma}_{j,\xi}}
	= \sum_{(k,\eta)\in\nabla^\Psi} m_{(j,\xi),(k,\eta)} \, a_{(k,\eta)}
	= \left( \M_{\Psi\nach\Phi} \bm{a} \right)_{(j,\xi)}.
\end{align*}
Thus, from \autoref{thm:ad} it follows
\begin{align*}
	\norm{u \sep B_{\Phi,q}^\alpha(L_p(\Gamma))} 
	&= \norm{ \tilde{\bm{a}} \sep b^\alpha_{p,q}\!\left(\nabla^\Phi\right)}  \\
	&= \norm{ M_{\Psi\nach\Phi}  \bm{a} \sep b^\alpha_{p,q}\!\left(\nabla^\Phi\right)} \\
	&\lesssim \norm{ \bm{a} \sep b^\alpha_{p,q}\!\left(\nabla^\Psi\right)} \\
	&= \norm{u \sep B_{\Psi,q}^\alpha(L_p(\Gamma))}.
\end{align*}
This shows that the identity (mapping $B_{\Psi,q}^\alpha(L_p(\Gamma))$ into $B_{\Phi,q}^\alpha(L_p(\Gamma))$), induced by the operator $M_{\Psi\nach\Phi}\colon b^\alpha_{p,q}\!\left(\nabla^\Psi\right) \nach b^\alpha_{p,q}\!\left(\nabla^\Phi\right)$ which in turn is represented by the matrix $\M_{\Psi\nach\Phi}$ defined in \link{cond:Gram}, indeed is continuous, as claimed.
\end{proof}

Next let us apply the general concept presented in \autoref{prop:localization} above to the practically relevant case of Besov-type spaces generated by wavelet bases $\Psi, \Phi \in \{\Psi_{\mathrm{DS}},\Psi_{\mathrm{HS}}, \Psi_{\mathrm{CTU}}\}$ on patchwise smooth ($d$-dimensional) manifolds $\Gamma$ in the sense of \autoref{sect:domains}.
As described in \autoref{sect:MRA_wavelets}, all of these constructions are biorthogonal with respect to the same inner product~\link{eq:inner_prod} and all of them are built up from univariate centered cardinal B-splines of order $D^{\Psi}$ and $D^{\Phi}$ with regularity $\gamma^{\Psi}$ and $\gamma^{\Phi}$, respectively. 
Again the corresponding dual quantities are denoted by $\tilde{D}^{\Psi}$, etc.
Then the combination of \autoref{prop:localization} with \autoref{lem:cancel} implies the following result.

\begin{theorem}\label{thm:localized_composite}
	For a patchwise smooth $d$-dim.\ manifold $\Gamma$ let 
	$\Psi, \Phi \in \{\Psi_{\mathrm{DS}}, \Psi_{\mathrm{HS}}, \Psi_{\mathrm{CTU}}\}$ 
	denote two wavelet bases as constructed in \cite{DS99}, \cite{HS06}, or \cite{CTU99,CTU00}, respectively. 
	Moreover, assume that their construction parameters satisfy
\begin{equation*}
	\min\{D^{\Phi}, \tilde{\gamma}^{\Phi}, \tilde{D}^{\Psi}, \gamma^{\Psi}\}> d/2.
\end{equation*} 
Then for all admissible tuples of parameters $(\alpha,p,q)$ with
\begin{equation}\label{cond:maxalpha}
	0 \leq \alpha < \min\{D^{\Phi}, \gamma^{\Psi}\}
\end{equation} 
the continuous embedding $B_{\Psi,q}^\alpha(L_p(\Gamma)) \hookrightarrow B_{\Phi,q}^\alpha(L_p(\Gamma))$ holds true.
\end{theorem}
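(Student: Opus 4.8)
The plan is to deduce the embedding from \autoref{prop:localization} by checking that the relevant Gramian is almost diagonal, and to carry out that check via the cancellation estimates of \autoref{lem:cancel}. Since all three constructions $\Psi_{\mathrm{DS}},\Psi_{\mathrm{HS}},\Psi_{\mathrm{CTU}}$ are biorthogonal with respect to \emph{the same} inner product $\distr{\cdot}{\cdot}$ of \link{eq:inner_prod}, \autoref{prop:localization} applies with $\distrmod{\cdot}{\cdot}=\distr{\cdot}{\cdot}$, so it remains to prove $\M_{\Psi\nach\Phi}=\bigl\{\distr{\psi_{k,\eta}^{\Gamma}}{\tilde{\phi}^{\Gamma}_{j,\xi}}\bigr\}\in\ad_{p}^{\alpha}$, i.e.\ to exhibit one $\epsilon>0$ with $\abs{\distr{\psi_{k,\eta}^{\Gamma}}{\tilde{\phi}^{\Gamma}_{j,\xi}}}\lesssim\omega_{(j,\xi),(k,\eta)}(\epsilon)$ for all pairs of indices, the fine index $q$ being irrelevant by \autoref{rem:ad}(i). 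Before starting I would record two facts that make the bookkeeping work out: first, because $\Gamma$ is bounded, admissibility of $(\alpha,p,q)$ gives $1/p\leq\alpha/d+1/2$, hence $\sigma_p\leq\alpha$; second, the hypotheses \link{cond:maxalpha} and $\min\{D^{\Phi},\tilde{\gamma}^{\Phi},\tilde{D}^{\Psi},\gamma^{\Psi}\}>d/2$ allow me to fix once and for all exponents $s_0\in(\max\{d/2,\alpha\},D^{\Phi}]$ with $s_0<\gamma^{\Psi}$, and $s_1\in(d/2,\tilde{D}^{\Psi}]$ with $s_1<\tilde{\gamma}^{\Phi}$.

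I would then split according to whether $j\geq k$ or $j<k$. In the first case $\tilde{\phi}^{\Gamma}_{j,\xi}$ is the finer wavelet, so I would use \emph{its} cancellation: by \link{P3} (admissible since $s_0\leq D^{\Phi}$),
\begin{equation*}
	\abs{\distr{\psi_{k,\eta}^{\Gamma}}{\tilde{\phi}^{\Gamma}_{j,\xi}}}\lesssim\sum_{i=1}^{N}2^{-j s_0}\,\abs{\psi_{k,\eta}^{\Gamma}\circ\kappa_i}_{H^{s_0}(\tilde{C}^{\,i}_{j,\xi})},
\end{equation*}
and then estimate the seminorm by a scaling argument: since $\psi_{k,\eta}^{\Gamma}$ is $L_2$-normalised (cf.\ \link{eq:normalized}), lives at scale $2^{-k}$, has regularity $\gamma^{\Psi}>s_0$, and $\bigl|\tilde{C}^{\,i}_{j,\xi}\bigr|\lesssim 2^{-jd}\leq 2^{-kd}$, one gets $\abs{\psi_{k,\eta}^{\Gamma}\circ\kappa_i}_{H^{s_0}(\tilde{C}^{\,i}_{j,\xi})}\lesssim 2^{k s_0}2^{(k-j)d/2}$, the term vanishing unless the supports meet, which by \link{P1}--\link{P2} forces $\dist{\xi}{\eta}\lesssim 2^{-k}=2^{-\min\{j,k\}}$. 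Dominating the resulting support indicator by $[1+2^{k}\dist{\xi}{\eta}]^{-(d+\epsilon+\sigma_p)}$ yields $\abs{\distr{\psi_{k,\eta}^{\Gamma}}{\tilde{\phi}^{\Gamma}_{j,\xi}}}\lesssim 2^{-(j-k)(s_0+d/2)}[1+2^{k}\dist{\xi}{\eta}]^{-(d+\epsilon+\sigma_p)}$, which, since for $j\geq k$ the weight equals $\omega_{(j,\xi),(k,\eta)}(\epsilon)=2^{-(j-k)(\alpha+d/2+\epsilon)}[1+2^{k}\dist{\xi}{\eta}]^{-(d+\epsilon+\sigma_p)}$, is $\lesssim\omega_{(j,\xi),(k,\eta)}(\epsilon)$ as soon as $\epsilon\leq s_0-\alpha$.

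For $j<k$ the situation is mirror-symmetric: $\psi_{k,\eta}^{\Gamma}$ is the finer wavelet, so I would apply the companion of \link{P3} (with $\tilde{\psi}^{\Gamma}_{j,\xi},D^{\Psi}$ replaced by $\psi^{\Gamma}_{k,\eta},\tilde{D}^{\Psi}$, admissible since $s_1\leq\tilde{D}^{\Psi}$) to $f=\tilde{\phi}^{\Gamma}_{j,\xi}$, use $\abs{\tilde{\phi}^{\Gamma}_{j,\xi}\circ\kappa_i}_{H^{s_1}(C^{\,i}_{k,\eta})}\lesssim 2^{j s_1}2^{(j-k)d/2}$ (valid since $s_1<\tilde{\gamma}^{\Phi}$ and $\bigl|C^{\,i}_{k,\eta}\bigr|\lesssim 2^{-kd}\leq 2^{-jd}$) and the support condition $\dist{\xi}{\eta}\lesssim 2^{-j}=2^{-\min\{j,k\}}$, arriving at
\begin{equation*}
	\abs{\distr{\psi_{k,\eta}^{\Gamma}}{\tilde{\phi}^{\Gamma}_{j,\xi}}}=\abs{\distr{\tilde{\phi}^{\Gamma}_{j,\xi}}{\psi_{k,\eta}^{\Gamma}}}\lesssim 2^{-(k-j)(s_1+d/2)}\,[1+2^{j}\dist{\xi}{\eta}]^{-(d+\epsilon+\sigma_p)}.
\end{equation*}
Since for $j<k$ the weight is $\omega_{(j,\xi),(k,\eta)}(\epsilon)=2^{-(k-j)(d/2+\epsilon+\sigma_p-\alpha)}[1+2^{j}\dist{\xi}{\eta}]^{-(d+\epsilon+\sigma_p)}$, this is $\lesssim\omega_{(j,\xi),(k,\eta)}(\epsilon)$ once $\epsilon\leq s_1+\alpha-\sigma_p$, and this upper bound is strictly positive precisely because $s_1>d/2>0$ and $\sigma_p\leq\alpha$ — the point where the second preliminary observation is used. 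Taking $\epsilon:=\min\{s_0-\alpha,\,s_1+\alpha-\sigma_p\}>0$ then gives $\M_{\Psi\nach\Phi}\in\ad_{p}^{\alpha}$, and \autoref{prop:localization} finishes the proof.

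I expect the only genuinely delicate step to be the scaling estimate $\abs{\psi_{k,\eta}^{\Gamma}\circ\kappa_i}_{H^{s}(\tilde{C}^{\,i}_{j,\xi})}\lesssim 2^{ks}2^{(k-j)d/2}$ for $k\leq j$ and its counterpart: the gain $2^{(k-j)d/2}$ — obtained by integrating the derivatives of a wavelet concentrated at scale $2^{-k}$ only over the much smaller cube of volume $\lesssim 2^{-jd}$ supplied by \link{P3} — is exactly what lets the admissible range be $\alpha<\min\{D^{\Phi},\gamma^{\Psi}\}$ instead of the weaker $\alpha+d/2<\min\{D^{\Phi},\gamma^{\Psi}\}$ one would obtain from a cruder bound. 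Fractional values of $s$ would be handled by interpolation combined with the piecewise polynomial structure of the underlying B-spline wavelets, and all interface-related subtleties are already absorbed into \autoref{lem:cancel}; by contrast, the reduction to almost diagonality via \autoref{prop:localization} and the comparison with $\omega$ are routine.
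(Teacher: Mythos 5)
Your proposal is correct and follows essentially the same route as the paper: reduction to \autoref{prop:localization}, verification of $\M_{\Psi\nach\Phi}\in\ad_p^\alpha$ via \link{P1}--\link{P3} with the case split $j\geq k$ versus $j<k$, and the same $L_\infty$-times-volume trick producing the crucial factor $2^{(k-j)d/2}$. The only (immaterial) difference is that you work with general admissible $p$ using $\sigma_p\leq\alpha$, whereas the paper first reduces to the limiting case $p=\tau$ on the adaptivity scale via the monotonicity of the classes $\ad_p^\alpha$.
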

\begin{proof}
\emph{Step 1.}
In order to prove the claim we like to apply \autoref{prop:localization}. 
Thus we have to show that the Gramian matrix (w.r.t the change of basis from $\Psi$ to $\Phi$)
\begin{equation*}
		\M_{\Psi\nach\Phi} 
		:= \left\{ \distr{\psi_{k,\eta}^{\Gamma}}{\tilde{\phi}^{\Gamma}_{j,\xi} }  \right\}_{(j,\xi)\in\nabla^{\Phi},(k,\eta)\in\nabla^\Psi}
	\end{equation*}
belongs to the class $\ad_{p}^{\alpha}$ (cf.\ \autoref{def:ad}) for $\alpha$ and $p$ under consideration. 
Here $\nabla^\Psi$ and $\nabla^\Phi$ denote the associated multiscale grids of dimension $d$ for $\Gamma$ and $\distr{\cdot}{\cdot}$ is defined in $\link{eq:inner_prod}$.
Due to the monotonicity of the classes $\ad_{p}^{\alpha}$ (see \autoref{rem:ad}(ii)) it suffices to consider the limiting case $p=\tau=\tau(\alpha)$ with
\begin{equation}\label{eq:adapt_scale}
	\tau^{-1} := \frac{\alpha}{d} + \frac{1}{2}
\end{equation}
and $\alpha$ that satisfies \link{cond:maxalpha}.
Furthermore, as we will show in Step~2 below, it follows from the support conditions \link{P1} and \link{P2} in \autoref{lem:cancel} that $[1+ \min\!\left\{ 2^k, 2^j \right\} \dist{\xi}{\eta}] \sim 1$, so that it is enough to show that there exists $\epsilon > 0$ such that
\begin{equation}\label{cond:min}
	\abs{ \distr{\psi_{k,\eta}^{\Gamma}}{\tilde{\phi}^{\Gamma}_{j,\xi}} }
	\lesssim \min\!\left\{ 2^{-(j-k)(d/2+\alpha+\epsilon)}, 2^{(j-k)(d/2-\alpha+\epsilon+ \sigma_\tau)} \right\}
	\quad \text{for all} \quad (j,\xi)\in\nabla^{\Phi},(k,\eta)\in\nabla^\Psi.
\end{equation} Afterwards, in Step 3, we complete the proof by showing that \link{cond:min} is implied by \link{P3} in \autoref{lem:cancel}.

\emph{Step 2.}
For $\zeta:=(y,t)\in \Gamma\times\T$ and $r>0$ let $B(\zeta,r):=\{y' \in \Gamma \sep \rho_\Gamma(y',y) < r \}$ denote the open ball of radius~$r$ around~$y$ in $\Gamma$. Then \link{P1} and \link{P2} imply that
\begin{equation*}
	\supp{\psi^{\Gamma}_{k,\eta}} \cap \supp{\tilde{\phi}^{\Gamma}_{j,\xi}}	 \subseteq B(\eta,c'\, 2^{-k}) \cap B(\xi,c'\, 2^{-j})
\end{equation*}
for some $c'>0$, all $j,k\in\N_0$, and every $\xi\in\nabla_j^\Phi$, $\eta\in\nabla_k^\Psi$, respectively.
Note that the latter intersection is empty if $\dist{\xi}{\eta} > c'(2^{-k}+2^{-j})$ which in turn shows that 
$\distr{\psi^{\Gamma}_{k,\eta}}{\tilde{\phi}^{\Gamma}_{j,\xi}}\neq 0$ only if
\begin{equation*}
	1\leq 1 + \min\!\left\{ 2^k, 2^j \right\} \dist{\xi}{\eta} \leq c''
\end{equation*}
for some $c'' \geq 1$ which does not depend on $j$ and $k$. Therefore \link{cond_ad}, i.e., membership of $\M_{\Psi\nach\Phi}$ in $\ad_{p}^{\alpha}$, is equivalent to \link{cond:min}, as promised.

\emph{Step 3.}
We show \link{cond:min}. For this purpose, we note that
\begin{equation*}
	\sigma_\tau
	= d \cdot \max\!\left\{ \frac{1}{\tau}-1,0 \right\} 
	= \left\{\begin{array}{l}
   \left.
 		\begin{aligned}
	    	 0, 	 	 & \quad \text{if} \quad 1 \leq \tau \leq 2,\\
		\!\!	d/\tau - d, & \quad \text{if} \quad	0 < \tau < 1
   		\end{aligned}
   	\right\} 
   	= \begin{cases}
   		0, 			  & \text{if} \quad 0 \leq \alpha \leq d/2,\\
   		\alpha - d/2, & \text{if} \quad d/2 < \alpha,
   	\end{cases}
	\end{array} \right.
\end{equation*}
due to \link{eq:adapt_scale}.
This leads to the observation that
\begin{equation*}
	\frac{d}{2}-\alpha +\epsilon +\sigma_\tau
	= \left\{\begin{array}{l}
   \left.
 		\begin{aligned}
	    \!\!	 d/2-\alpha + \epsilon,  & \quad \text{if} \quad 0 \leq \alpha \leq d/2\\
				 \epsilon, & \quad \text{if} \quad	d/2 < \alpha
   		\end{aligned}
   	\right\}
   	\geq \epsilon > 0
	\end{array} \right.
\end{equation*}
such that the proof of \link{cond:min} naturally splits into the cases $j\geq k$ and $j<k$. 
For $j,k\in\N_0$ with $j\geq k$ we apply the first part of \link{P3} in \autoref{lem:cancel} for the basis $\Phi$ and $f:=\psi_{k,\eta}^\Gamma$ with $\eta\in\nabla_k^\Psi$. 
Observe that the patchwise regularity of this primal wavelet is as large as the smoothness of the underlying univariate spline used for its construction. Hence, given $i\in\{1,\ldots,N\}$, we conclude
\begin{equation*}
	\abs{\psi_{k,\eta}^\Gamma \circ \kappa_i}_{H^s(\tilde{C}_{j,\xi}^i)}
	\lesssim 2^{k s} \norm{\psi_{k,\eta}^\Gamma \circ \kappa_i \sep L_2(\tilde{C}_{j,\xi}^i)}
	\qquad \text{for all} \qquad 
	0\leq s < \gamma^{\Psi}.
\end{equation*}
For $s\in\N$ this simply follows from the multiscale structure of the wavelets. The case $s\notin\N$ can be derived using standard interpolation arguments.
Furthermore,
\begin{align*}
	\norm{\psi_{k,\eta}^\Gamma \circ \kappa_i \sep L_2(\tilde{C}_{j,\xi}^i)}^2
	= \int_{\tilde{C}_{j,\xi}^i} \abs{\psi_{k,\eta}^\Gamma (\kappa_i(x))}^2 \d x
	\leq \norm{\psi_{k,\eta}^\Gamma \sep L_\infty(\Gamma)}^2 \cdot \abs{\tilde{C}_{j,\xi}^i} \lesssim 2^{k d} \cdot 2^{-j d},
\end{align*}
since $\psi_{k,\eta}^\Gamma$ is $L_2(\Gamma)$-normalized; see \link{eq:normalized}. 
Combining the two last estimates with \link{patchwise_bound} thus gives
\begin{equation*}
	\abs{\distr{\psi_{k,\eta}^\Gamma}{\tilde{\phi}_{j,\xi}^\Gamma}} 
	\lesssim \sum_{i=1}^N 2^{-j s} \, 2^{k s}\, 2^{(k-j) d/2} 
	\sim 2^{-(j-k)(d/2+s)}
\end{equation*}
for all $d/2 < s \leq D^{\Phi}$ with $s<\gamma^{\Psi}$ and $\xi\in\nabla_j^\Phi$, $\eta\in\nabla_k^\Psi$ with $j\geq k$ in $\N_0$.
Note that, due to the assumption \link{cond:maxalpha}, we can find some $s$ in this range which is strictly larger than $\alpha$.
Choosing $\epsilon>0$ sufficiently small then yields $2^{-(j-k)(d/2+s)}\leq 2^{-(j-k)(d/2+\alpha+\epsilon)}$ which finally shows \link{cond:min} for $j\geq k$.

We are left with the case $j<k$. 
Using the second part of \link{P3} in \autoref{lem:cancel} (for the basis~$\Psi$, the index $\eta\in\nabla_k^\Psi$, and $f:=\tilde{\phi}_{j,\xi}^\Gamma$ with $\xi\in\nabla_j^\Phi$) together with the same arguments as before, we deduce the bound
\begin{equation*}
	\abs{\distr{\psi_{k,\eta}^\Gamma}{\tilde{\phi}_{j,\xi}^\Gamma}} 
	= \abs{\distr{\tilde{\phi}_{j,\xi}^\Gamma}{\psi_{k,\eta}^\Gamma}} \lesssim 2^{-(k-j)(d/2+s)}
\end{equation*}
for all $d/2<s\leq \tilde{D}^{\Psi}$ with $s<\tilde{\gamma}^{\Phi}$ and $j<k$ in $\N_0$.
Observe that for every such $s$ we have the estimate $2^{-(k-j)(d/2+s)}\leq 2^{(j-k)(d/2-\alpha+\epsilon+\sigma_\tau)}$, provided that $\epsilon>0$ is chosen small enough.
Therefore \link{cond:min} also holds for $j<k$ which completes the proof.
\end{proof}

As an immediate consequence of \autoref{thm:localized_composite} we conclude the main result of this paper. 
It states that all wavelet bases $\Psi, \Phi \in \{\Psi_{\mathrm{DS}},\Psi_{\mathrm{HS}}, \Psi_{\mathrm{CTU}}\}$ induce the same Besov-type spaces $B^\alpha_{\Psi,q}(L_p(\Gamma))=B^\alpha_{\Phi,q}(L_p(\Gamma))$ on patchwise smooth manifolds $\Gamma$ (in the sense of equivalent quasi-norms), provided that the primal sets of wavelets are of sufficiently large order of cancellation and regularity compared to the smoothness parameter $\alpha$ of the space.

\begin{theorem}\label{thm:equivalence}
	Given some $d$-dimensional manifold $\Gamma$ which is patchwise smooth in the sense of \autoref{sect:domains} let $\Psi=(\Psi^{\Gamma},\tilde{\Psi}^\Gamma)$ and $\Phi=(\Phi^{\Gamma},\tilde{\Phi}^\Gamma)$ denote two wavelet bases from 
	$\{\Psi_{\mathrm{DS}}, \Psi_{\mathrm{HS}}, \Psi_{\mathrm{CTU}}\}$ as constructed in \cite{DS99}, \cite{HS06}, or \cite{CTU99,CTU00}, respectively, and assume that their construction parameters satisfy
\begin{equation*}
	\min\!\left\{ D^{\Psi}, \tilde{D}^{\Psi}, \gamma^{\Psi}, \tilde{\gamma}^{\Psi}, D^{\Phi}, \tilde{D}^{\Phi}, \gamma^{\Phi}, \tilde{\gamma}^{\Phi} \right\} > d/2.
\end{equation*}
Then, for all admissible tuples of parameters $(\alpha,p,q)$ with
\begin{equation*}
	0 \leq \alpha < \min\!\left\{ D^{\Psi}, D^{\Phi}, \gamma^{\Psi}, \gamma^{\Phi} \right\},
\end{equation*}
it holds $B^\alpha_{\Psi,q}(L_p(\Gamma))=B^\alpha_{\Phi,q}(L_p(\Gamma))$ in the sense of equivalent (quasi-)norms.
\end{theorem}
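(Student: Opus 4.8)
The plan is to obtain $B^\alpha_{\Psi,q}(L_p(\Gamma))=B^\alpha_{\Phi,q}(L_p(\Gamma))$ by applying \autoref{thm:localized_composite} \emph{twice}, once in each direction, and then observing that the two resulting one-sided embeddings are mutually inverse on the nose (they are both induced by the identity map on $L_2(\Gamma)$), so that equality of the spaces with equivalence of quasi-norms follows. Concretely, first I would verify that the hypotheses of \autoref{thm:localized_composite} are met for the change of basis from $\Psi$ to $\Phi$: the assumption $\min\{D^\Psi,\tilde D^\Psi,\gamma^\Psi,\tilde\gamma^\Psi,D^\Phi,\tilde D^\Phi,\gamma^\Phi,\tilde\gamma^\Phi\}>d/2$ in particular forces $\min\{D^\Phi,\tilde\gamma^\Phi,\tilde D^\Psi,\gamma^\Psi\}>d/2$, which is exactly the regularity/cancellation condition required there; and the admissible tuple $(\alpha,p,q)$ with $0\le\alpha<\min\{D^\Psi,D^\Phi,\gamma^\Psi,\gamma^\Phi\}$ in particular satisfies $0\le\alpha<\min\{D^\Phi,\gamma^\Psi\}$, which is condition \link{cond:maxalpha}. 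Hence \autoref{thm:localized_composite} yields the continuous embedding $B^\alpha_{\Psi,q}(L_p(\Gamma))\hookrightarrow B^\alpha_{\Phi,q}(L_p(\Gamma))$.

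Next I would simply swap the roles of $\Psi$ and $\Phi$. Since the hypothesis on the construction parameters is symmetric in $\Psi$ and $\Phi$ (it is a minimum over the full list of eight quantities), and since the smoothness bound $0\le\alpha<\min\{D^\Psi,D^\Phi,\gamma^\Psi,\gamma^\Phi\}$ is likewise symmetric and in particular implies $0\le\alpha<\min\{D^\Psi,\gamma^\Phi\}$, a second application of \autoref{thm:localized_composite} — now with $\Phi$ in the role of the source basis and $\Psi$ in the role of the target — gives the reverse continuous embedding $B^\alpha_{\Phi,q}(L_p(\Gamma))\hookrightarrow B^\alpha_{\Psi,q}(L_p(\Gamma))$.

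Finally I would tie the two embeddings together. Both are realized through \autoref{prop:localization} by the Gramian (change of basis) operator, and both correspond to the identity on the underlying function space: every $u\in L_2(\Gamma)$ has a unique expansion in each of the biorthogonal Riesz bases, and the map $u\mapsto(\distrmod{u}{\tilde\phi^\Gamma_{j,\xi}})_{(j,\xi)\in\nabla^\Phi}$ composed with the corresponding reconstruction is just $u\mapsto u$. Consequently the two embeddings are set-theoretically inverse to each other, so $B^\alpha_{\Psi,q}(L_p(\Gamma))$ and $B^\alpha_{\Phi,q}(L_p(\Gamma))$ consist of exactly the same functions, and the two continuity estimates (each a bound $\norm{u\sep B^\alpha_{\Phi,q}(L_p(\Gamma))}\lesssim\norm{u\sep B^\alpha_{\Psi,q}(L_p(\Gamma))}$ and vice versa) combine to the norm equivalence $\norm{u\sep B^\alpha_{\Psi,q}(L_p(\Gamma))}\sim\norm{u\sep B^\alpha_{\Phi,q}(L_p(\Gamma))}$. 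This is all there is to the proof: the theorem is an immediate corollary of \autoref{thm:localized_composite}.

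I do not anticipate a genuine obstacle here, since the substantive work has been carried out already — the almost-diagonality of the Gramian matrix in \autoref{thm:localized_composite} and the operator bound \autoref{thm:ad}. The only point demanding a little care is bookkeeping with the parameter conditions: one must check that both the hypothesis $\min\{D^\Phi,\tilde\gamma^\Phi,\tilde D^\Psi,\gamma^\Psi\}>d/2$ and its $\Psi\leftrightarrow\Phi$ swap are subsumed by the single symmetric assumption stated in \autoref{thm:equivalence}, and likewise for the two instances of \link{cond:maxalpha}. Since the stated hypotheses are deliberately chosen to be symmetric and to dominate both sets of one-directional requirements, this verification is routine, and nothing deeper is needed.
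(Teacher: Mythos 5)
Your proposal is correct and is exactly the argument the paper intends: the paper states \autoref{thm:equivalence} as an immediate consequence of \autoref{thm:localized_composite}, obtained by applying that theorem in both directions and noting that the symmetric hypotheses on the construction parameters and on $\alpha$ subsume both sets of one-sided conditions. Your bookkeeping of the parameter conditions is accurate, so nothing further is needed.
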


\section{Appendix}\label{sect:appendix}
\subsection{Auxiliary assertions}\label{sect:aux}
This part of the appendix is concerned with auxiliary (technical) assertions that are needed in our proofs.
We start with a result which can be easily derived from \cite[Lem.~2]{Ry99}.
\begin{lemma}\label{lem:rych}
Let $0<r<\infty$, as well as $0<q\leq \infty$, and $\bm{x}:=\left(x_k\right)_{k\in\N_0} \in \ell_q(\N_0)$.
Then for all $\delta>0$ there exists a constant $c=c(\delta,r,q)>0$ such that both the quantities
\begin{align*}
	\norm{ \left( \left[ \sum_{k < j} 2^{-\delta(j-k) r} \abs{x_k}^r \right]^{1/r} \right)_{j\in\N_0} \sep \ell_q(\N_0)}
	\quad \text{and} \quad 
	\norm{ \left( \left[ \sum_{k \geq j} 2^{\delta(j-k) r} \abs{x_k}^r \right]^{1/r} \right)_{j\in\N_0} \sep \ell_q(\N_0)}
\end{align*}	
are upper bounded by $c \cdot \norm{\left( x_k \right)_{k\in\N_0} \sep \ell_q(\N_0)}$.
\end{lemma}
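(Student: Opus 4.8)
The plan is to recognize each of the two quantities as a discrete convolution of the nonnegative sequence $(\abs{x_k}^r)_{k\in\N_0}$ against a geometrically decaying kernel, and then to apply a discrete Young-type inequality. Concretely, I would set $a_k := \abs{x_k}^r$ for $k\in\N_0$ and put $a_\ell := 0$ for $\ell<0$. For the first quantity, write
$b_j := \sum_{k<j} 2^{-\delta(j-k)r} a_k = \sum_{m\geq 1} g_m\, a_{j-m}$ with $g_m := 2^{-\delta r m}$ ($m\geq 1$), so that the $j$-th entry of the sequence under consideration equals $b_j^{1/r}$. The second quantity is treated identically with $j-k$ replaced by $k-j$ and the kernel supported on $m\geq 0$ instead of $m\geq 1$; since this case is entirely symmetric I would carry out only the first one in detail and remark on the obvious modifications.

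First I would dispose of $q=\infty$: there $b_j^{1/r} \leq \bigl(\sum_{m\geq 1} g_m\bigr)^{1/r}\sup_k\abs{x_k}$ directly, the geometric series converging because $\delta r>0$. For $q<\infty$ I would split according to whether $s:=q/r$ is $\geq 1$ or $<1$. If $s\geq 1$, the discrete Young inequality $\norm{g*a\sep\ell_s(\Z)}\leq\norm{g\sep\ell_1}\,\norm{a\sep\ell_s}$ applies, and one checks $\norm{(b_j^{1/r})_j\sep\ell_q} = \norm{(b_j)_j\sep\ell_s}^{1/r} \leq \norm{g\sep\ell_1}^{1/r}\,\norm{(a_k)_k\sep\ell_s}^{1/r} = \norm{g\sep\ell_1}^{1/r}\,\norm{\bm{x}\sep\ell_q}$, where the last identity uses $sr=q$. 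If $0<s<1$, Minkowski's inequality is unavailable, and instead I would invoke the elementary subadditivity $(\sum_i c_i)^s\leq\sum_i c_i^s$ for nonnegative $c_i$, which gives $\sum_j b_j^s \leq \sum_j\sum_{m\geq 1} g_m^s\, a_{j-m}^s = \bigl(\sum_{m\geq 1} g_m^s\bigr)\bigl(\sum_k a_k^s\bigr)$, hence $\norm{(b_j^{1/r})_j\sep\ell_q} = \bigl(\sum_j b_j^s\bigr)^{1/q} \leq \bigl(\sum_{m\geq 1} 2^{-\delta q m}\bigr)^{1/q}\,\norm{\bm{x}\sep\ell_q}$, the series again converging because $\delta q>0$. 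In every case the resulting constant depends only on $\delta$, $r$, $q$, as required, and the second quantity is handled by the same computation with the kernel $(2^{-\delta r m})_{m\geq 0}$ (still summable, with the extra term $g_0=1$).

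There is no serious obstacle here; the only point requiring care is the quasi-Banach regime $q<r$, where one replaces Young's/Minkowski's inequality by the $s$-subadditivity of $t\mapsto t^s$, together with the minor bookkeeping of the kernel support ($m\geq 1$ in the first sum, $m\geq 0$ in the second) and the fact that passing from $\sum_{j\in\N_0}$ to $\sum_{j\in\Z}$ only enlarges the norm. Alternatively — and this is the route indicated in the text — the statement follows from \cite[Lem.~2]{Ry99} by reading the present expressions as the $\ell_q$-norms of the sequences occurring there. I would mention this as a shortcut but prefer the self-contained convolution argument above, since it is short and exhibits the dependence of $c$ on the parameters explicitly.
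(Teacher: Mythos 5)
Your proof is correct. Note, however, that the paper does not actually prove this lemma at all: it is stated with the remark that it ``can be easily derived from \cite[Lem.~2]{Ry99}'', and no argument is given. Your self-contained convolution argument is therefore a genuine (and welcome) alternative to the paper's route. The two are close in spirit --- Rychkov's Lemma~2 is itself proved by exactly the mechanism you use, namely convolving against the summable geometric kernel $(2^{-\delta r m})_m$ and splitting into the Banach regime (discrete Young, $q/r\geq 1$) and the quasi-Banach regime ($s$-subadditivity of $t\mapsto t^{s}$ for $s=q/r<1$) --- but your write-up has the advantage of exhibiting the constant $c(\delta,r,q)$ explicitly as $\bigl(\sum_{m}2^{-\delta r m}\bigr)^{1/r}$ resp.\ $\bigl(\sum_{m}2^{-\delta q m}\bigr)^{1/q}$, and of avoiding a citation chase. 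All the individual steps check out: the identity $\norm{(b_j^{1/r})_j\sep\ell_q}=\norm{(b_j)_j\sep\ell_{q/r}}^{1/r}$, the computation $g_m^{s}=2^{-\delta q m}$ in the subadditive case, the zero-extension to $\Z$ together with the observation that restricting to $\N_0$ only decreases the norm, and the symmetric treatment of the second sum with kernel supported on $m\geq 0$. The only cosmetic slip is in the $q=\infty$ case, where the bound $b_j\leq\bigl(\sum_{m\geq1}g_m\bigr)\sup_k|x_k|^{r}$ should be taken to the power $1/r$ before comparing with $\sup_k|x_k|$ --- which is evidently what you intend, and the conclusion is unaffected.
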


The next assertion constitutes a generalization of Schur's Lemma to the case of $\sigma$-finite measure spaces. 
It can be shown by straightforward calculations along the lines of \cite[(0.10)]{Fo95}.
\begin{lemma}\label{lem:int_op}
Let $(X,\mu)$ and $(Y,\nu)$ denote $\sigma$-finite measure spaces, $1\leq p \leq \infty$, as well as $1/p'+1/p=1$.
Moreover, assume that the measurable function $K\colon X \times Y \nach \C$ satisfies
\begin{align*}
	C_1(K) := \esssup_{x\in X} \int_Y \abs{K(x,y)} \, \d\nu(y) <\infty 
	\quad \text{and} \quad 
	C_2(K) := \esssup_{y\in Y} \int_X \abs{K(x,y)} \, \d\mu(x) <\infty.
\end{align*}
Then the integral operator $T\colon L_p(X,\mu) \nach L_p(Y,\nu)$, given by 
\begin{equation*}
	f \mapsto Tf := \int_X K(x,\cdot) f(x) \, \d\mu(x),
\end{equation*}
is well-defined and satisfies $\norm{T}:=\norm{T \sep \mathcal{L}(L_p(X,\mu),L_p(Y,\nu))} \leq C_1(K)^{1/p} \cdot C_2(K)^{1/p'}$.
\end{lemma}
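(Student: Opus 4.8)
The plan is to reduce everything to a single application of Hölder's inequality in the core range $1<p<\infty$ and to dispatch the two endpoints $p\in\{1,\infty\}$ by direct, simpler estimates. Throughout I would work with the nonnegative integrands $\abs{K}$ and $\abs{f}$, so that all manipulations are legitimate for nonnegative measurable functions and the assertion for $Tf$ follows a fortiori from the pointwise bound $\abs{Tf(y)}\leq\int_X\abs{K(x,y)}\,\abs{f(x)}\d\mu(x)$. Well-definedness will not need a separate argument: it drops out of the same computation, since finiteness of the controlling integral forces the defining integral of $Tf(y)$ to converge absolutely for $\nu$-almost every $y$.

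For $1<p<\infty$ with conjugate exponent $p'$, the idea is to factor the kernel as $\abs{K(x,y)}=\abs{K(x,y)}^{1/p'}\cdot\abs{K(x,y)}^{1/p}$ and apply Hölder's inequality in the variable $x$:
\begin{equation*}
	\int_X \abs{K(x,y)}\,\abs{f(x)}\d\mu(x)
	\leq \left(\int_X \abs{K(x,y)}\d\mu(x)\right)^{1/p'}\left(\int_X \abs{K(x,y)}\,\abs{f(x)}^p\d\mu(x)\right)^{1/p}.
\end{equation*}
The first factor is bounded by $C_2(K)^{1/p'}$ uniformly in $y$. Raising the resulting estimate for $\abs{Tf(y)}$ to the $p$-th power and integrating over $Y$ then gives
\begin{equation*}
	\int_Y\abs{Tf(y)}^p\d\nu(y)
	\leq C_2(K)^{p/p'}\int_Y\int_X \abs{K(x,y)}\,\abs{f(x)}^p\d\mu(x)\,\d\nu(y).
\end{equation*}

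The crucial (and only genuinely non-routine) step is interchanging the order of integration in the iterated integral on the right. This is exactly where the hypotheses are used: since $(X,\mu)$ and $(Y,\nu)$ are $\sigma$-finite and the integrand is nonnegative and measurable, Tonelli's theorem permits the swap, after which the inner integral $\int_Y\abs{K(x,y)}\d\nu(y)$ is controlled by $C_1(K)$ uniformly in $x$. This yields
\begin{equation*}
	\int_Y\abs{Tf(y)}^p\d\nu(y)
	\leq C_2(K)^{p/p'}\,C_1(K)\,\norm{f\sep L_p(X,\mu)}^p,
\end{equation*}
and taking $p$-th roots produces precisely $\norm{Tf\sep L_p(Y,\nu)}\leq C_1(K)^{1/p}\,C_2(K)^{1/p'}\norm{f\sep L_p(X,\mu)}$. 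Finiteness of the right-hand side for $f\in L_p(X,\mu)$ shows that $\int_X\abs{K(x,y)}\,\abs{f(x)}\d\mu(x)<\infty$ for $\nu$-almost every $y$, settling the well-definedness claim.

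Finally, the endpoints are immediate and require no Hölder step. For $p=1$ (so $1/p'=0$) one integrates the pointwise bound directly and applies Tonelli once to obtain $\norm{Tf\sep L_1(Y,\nu)}\leq C_1(K)\norm{f\sep L_1(X,\mu)}$; for $p=\infty$ (so $1/p=0$) one estimates $\abs{Tf(y)}\leq\norm{f\sep L_\infty(X,\mu)}\int_X\abs{K(x,y)}\d\mu(x)\leq C_2(K)\norm{f\sep L_\infty(X,\mu)}$ pointwise. Both agree with the stated bound under the convention $C^{0}=1$, so there is no real obstacle beyond bookkeeping; the essential content of this generalization of the classical Schur test is exactly that $\sigma$-finiteness is what legitimizes the Tonelli step for abstract measure spaces in place of counting measure (the setting in which \autoref{lem:int_op} is subsequently applied, with $\mu,\nu$ the counting measures on $\nabla^0_k$ and $\nabla^1_j$).
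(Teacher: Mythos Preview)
Your argument is correct and is precisely the standard Schur-test computation the paper has in mind: the paper does not spell out a proof but merely points to \cite[(0.10)]{Fo95}, whose derivation is exactly the H\"older splitting $\abs{K}=\abs{K}^{1/p'}\abs{K}^{1/p}$ followed by Tonelli that you carry out. Nothing needs to be added.
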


Finally, in the proof of \autoref{thm:ad} we make use of the following estimate.
Therein $\Gamma$ denotes an arbitrary set furnished with some metric.
\begin{lemma}\label{lem:sum}
Let $\nabla=(\nabla_j)_{j\in\N_0}$ denote a multiscale grid of dimension $d\in\N$ for some set $\Gamma$ in the sense of \autoref{ass:struct}.
Then, for each $j,k\in\N_0$, every $s > d$ and all finite sets $\T\neq\emptyset$, we have
\begin{equation*}
	\sup_{x\in\Gamma\times \T} \sum_{\xi \in \nabla_j} \frac{1}{\left[ 1+2^k \dist{\xi}{x} \right]^s} \leq C \cdot \max\!\left\{1, 2^{(j-k)s}\right\}
\end{equation*}
with some $C>0$ which does not depend on $j$ and $k$.
\end{lemma}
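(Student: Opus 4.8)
\textbf{Proof plan for \autoref{lem:sum}.}
The plan is to reduce the estimate to a dyadic decomposition of the annular shells around the fixed point $x=(y,t)\in\Gamma\times\T$ and then to count, using the structural assumptions on the multiscale grid, how many indices $\xi\in\nabla_j$ can fall into each shell. First I would fix $j,k\in\N_0$, $s>d$, a finite set $\T\neq\emptyset$, and $x\in\Gamma\times\T$, and split the index set according to the size of $\dist{\xi}{x}$. Concretely, for $\ell\in\N_0$ put
\begin{equation*}
	A_\ell := \left\{ \xi\in\nabla_j \sep 2^{-k}(2^{\ell-1}) \leq 1+2^k\dist{\xi}{x} < 2^{-k}\cdot 2^{\ell}\cdot 2^k \right\},
\end{equation*}
or more cleanly, group the $\xi$ by whether $2^{k}\dist{\xi}{x}\in[2^{\ell-1},2^\ell)$ for $\ell\geq 1$, together with the central block where $2^k\dist{\xi}{x}<1$. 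On $A_\ell$ one has $[1+2^k\dist{\xi}{x}]^{-s}\sim 2^{-\ell s}$, so the sum is controlled by $\sum_{\ell} 2^{-\ell s}\,\#A_\ell$, and everything hinges on a good bound for $\#A_\ell$.

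The counting step is where the hypotheses \ref{A2} and \ref{A3} enter. The points of $A_\ell$ all lie (in the pseudometric \link{eq:pseudometric}) within distance $\sim 2^{\ell-k}$ of $x$; I would cover this ball of radius $\sim 2^{\ell-k}$ by $\lesssim \max\{1,2^{(\ell-k+j)d}\}$ balls of radius $\sim 2^{-j}$ (this is exactly the kind of volume/packing count that \ref{A3} encodes for $\nabla$, after noting that the finiteness of $\T$ only costs a constant), and then use the uniform separation \ref{A2} to see that each such small ball contains only $\lesssim 1$ elements of $\nabla_j$. This yields $\#A_\ell \lesssim \max\{1, 2^{(j-k+\ell)d}\}$. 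Substituting into $\sum_\ell 2^{-\ell s}\#A_\ell$ and using $s>d$ so that the geometric series $\sum_\ell 2^{\ell(d-s)}$ converges, the dominant term is $2^{(j-k)d}\sum_{\ell\geq \max\{0,k-j\}} 2^{\ell(d-s)}$ plus the contribution $\sum_{\ell< \max\{0,k-j\}} 2^{-\ell s}\cdot O(1)$; a short case distinction on the sign of $j-k$ collapses this to $C\max\{1,2^{(j-k)s}\}$, with $C$ depending only on $d,s$ and the absolute constants $c_1,c_2,c_3$ but not on $j,k$.

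The main obstacle I anticipate is making the packing/counting argument fully rigorous from the \emph{axioms} \ref{A1}--\ref{A3} rather than from a concrete Euclidean picture: in particular one must be careful that \ref{A3} is stated only for the fixed threshold $c_3$ and for the "full" scale $2^{dj}$, so extracting a local count of $\nabla_j$-points in a ball of an arbitrary radius $R$ requires iterating/rescaling the net property \ref{A1} together with \ref{A2}, and handling the regime $R\lesssim 2^{-j}$ (where the count is $O(1)$) separately from $R\gtrsim 2^{-j}$. I would isolate this as a preliminary sublemma — "a ball of radius $R$ in $\Gamma\times\T$ meets $\nabla_j$ in at most $C\max\{1,(2^jR)^d\}$ points" — prove it once from \ref{A1}--\ref{A3}, and then the rest of the argument is the routine geometric-series bookkeeping sketched above. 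The finiteness of $\T$ plays no essential role beyond contributing the harmless factor $\#\T$ into the constants.
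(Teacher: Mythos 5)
Your proposal is correct and yields the stated bound, but it organizes the layer decomposition differently from the paper, and the comparison is instructive. The paper first uses \link{A1} to pick a net point $\xi_x\in\nabla_j$ with $\dist{\xi_x}{x}\leq c_1 2^{-j}$, and then decomposes $\nabla_j$ into \emph{arithmetic} annuli of width $2^{-j}$ around $\xi_x$, namely $L_\ell=\{\xi\in\nabla_j \,:\, \ell\, 2^{-j}\leq\dist{\xi}{\xi_x}<(\ell+1)\,2^{-j}\}$; it bounds the innermost block by \link{A2}, asserts $\# L_\ell\lesssim\ell^{d-1}$ from \link{A3}, and concludes from $1+2^{(j-k)s}\sum_{n\geq 1}(n+\ceil{c_1})^{d-1}n^{-s}$, convergent precisely for $s>d$. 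You instead use \emph{dyadic} shells in the variable $2^k\dist{\xi}{x}$ centered at $x$ itself, which only requires the cruder ball count $\#\left(\nabla_j\cap B(x,R)\right)\lesssim\max\{1,(2^jR)^d\}$ rather than the thin-annulus count $\ell^{d-1}$; note that with arithmetic annuli the cruder count $\ell^{d}$ would force $s>d+1$, so the paper genuinely needs the sharper surface-type estimate, whereas your geometric grouping absorbs the difference into the bounded ratio of consecutive shell radii. Two remarks: (i) your ball-count sublemma is most naturally obtained from the axioms by summing the paper's annulus counts, $\sum_{\ell\lesssim 2^jR}\ell^{d-1}\sim(2^jR)^d$, and, exactly as in the paper, you should first replace the center $x$ by a nearby $\xi_x\in\nabla_j$ via \link{A1}, since \link{A2} and \link{A3} are formulated only for centers in $\nabla_j$; (ii) the obstacle you flag --- that \link{A3} is stated only at the fixed scale $c_3$ and does not formally yield counts at intermediate scales $2^{-j}\ll R\ll 1$ --- is real, but the paper's own proof glosses over precisely the same point when it asserts $\#L_\ell\lesssim\ell^{d-1}$ ``because of \link{A3}'', so your argument is at no disadvantage in rigor; both rest on the intended lattice-like interpretation of multiscale grids.
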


\begin{proof}
Let $x\in\Gamma\times \T$ be fixed. Due to \link{A1} there exists $\xi_x \in \nabla_j$ such that $\dist{\xi_x}{x} \leq c_1 \, 2^{-j}$. 
Hence, the triangle inequality implies $\dist{\xi}{x} \geq \dist{\xi}{\xi_x} - c_1\,2^{-j}$, i.e.,
\begin{equation*}
	1 + 2^k \dist{\xi}{x} \geq 1 - c_1\, 2^{k-j} + 2^k \dist{\xi}{\xi_x}
	\quad \text{for all} \quad \xi \in \nabla_j.
\end{equation*}
Next we define layer sets $L := \left\{ \xi \in \nabla_j \sep 0 \leq \dist{\xi}{\xi_x} < (\ceil{c_1}+1)\, 2^{-j} \right\}$, and
\begin{align*}
	L_{\ell} := \left\{ \xi \in \nabla_j \sep \ell\, 2^{-j} \leq \dist{\xi}{\xi_x} < (\ell+1)\, 2^{-j} \right\}
	\quad \text{for all} \quad 
	\ell \in \N 
	\quad \text{with} \quad 
	\ell \geq \ceil{c_1}+1.
\end{align*}
Since $\nabla$ is uniformly well-separated (see \link{A2}), we note that $\# L$ is bounded (uniformly in $x$ and~$j$). The estimate $\# L_\ell \lesssim \ell^{d-1}$ holds because of \link{A3}.
Moreover, we obviously have the coincidence $\nabla_j = L \cup \bigcup_{\ell\geq \ceil{c_1}+1} L_\ell$.
Thus we may estimate
\begin{align*}
	\sum_{\xi \in \nabla_j} \frac{1}{\left[ 1+2^k \dist{\xi}{x} \right]^s}
	&= \sum_{\xi \in L} \frac{1}{\left[ 1+2^k \dist{\xi}{x} \right]^s}
	+ \sum_{\ell =\ceil{c_1}+1}^\infty \sum_{\xi \in L_\ell} \frac{1}{\left[ 1+2^k \dist{\xi}{x} \right]^s} \\
	&\leq \# L + \sum_{\ell=\ceil{c_1}+1}^\infty \sum_{\xi \in L_\ell} \frac{1}{\left[ 1 - c_1\, 2^{k-j} + 2^k \dist{\xi}{\xi_x} \right]^s} \\ 
	&\lesssim 1 + \sum_{\ell=\ceil{c_1}+1}^\infty \frac{\ell^{d-1}}{\left[ 1 + (\ell - \ceil{c_1})\, 2^{k-j} \right]^s} \\
	&\leq 1 + 2^{(j-k)s} \sum_{n=1}^\infty \frac{(n+\ceil{c_1})^{d-1}}{n^s},
\end{align*}
where the last sum converges due to the assumption $s > d$. 
Taking the supremum over all $x\in\Gamma\times\T$ now completes the proof.
\end{proof}

\subsection{Proofs}\label{sect:proofs}
For the sake of completeness, in this final section, we add some proofs which were postponed in order to improve readability of the present manuscript. 
Let us start with showing the result on standard embeddings for sequence spaces $b^{\alpha}_{p,q}(\nabla)$ stated in \autoref{prop:seq-embedding}.

\begin{proof}[Proof (of \autoref{prop:seq-embedding})]
First of all we note that \link{A1}--\link{A3} in  \autoref{ass:struct} assure that every complex sequence $\bm{a}:= (a_{(j,\xi)})_{(j,\xi)\in\nabla}$ can be identified with some $\tilde{\bm{a}} := (\tilde{a}_{(j,\lambda)})_{j\in\N_0,\lambda\in\tilde{\nabla}_j}$
such that for all parameters $\alpha$, $p$, and $q$ we have
\begin{equation}\label{eq:norm-eq}
	\norm{\bm{a}\sep b^{\alpha}_{p,q}(\nabla)} \sim \norm{\tilde{\bm{a}} \sep b^{\alpha}_{p,q}(\tilde{\nabla})}
\end{equation}
(with implied constants solely depending on $p$ and $q$),
where the latter (quasi-)norm is defined by~\link{def:seq_norm} with $\nabla$ replaced by $\tilde{\nabla}:=(\tilde{\nabla}_j)_{j\in\N_0}$ and $\tilde{\nabla}_j \subseteq \Z^d\times\{1,\ldots,2^d-1\}$.
Moreover, $\tilde{\nabla}$ satisfies \link{A4a} or \link{A4b}, respectively, if and only if the same is true for $\nabla$.

In the case of finite index sets our definition of the spaces $b^{\alpha}_{p,q}(\tilde{\nabla})$ exactly matches \cite[Def.~3]{DNS06}. Then for $\gamma \geq 0$ the claimed assertion is covered by \cite[Lem.~4]{DNS06}.
On the other hand, if $\gamma < 0$ then the sequence $\tilde{\bm{a}}^*:=(\tilde{a}^*_{(j,\lambda)})_{j\in\N_0, \lambda\in \tilde{\nabla}_j}$ defined by $\tilde{a}^*_{(j,\lambda)} :\equiv 2^{-j(d/2+\alpha+\gamma/2)}$ for every $\lambda\in\tilde{\nabla}_j$, $j\in\N_0$, belongs to $b_{p_0,q_0}^{\alpha+\gamma}(\tilde{\nabla}) \setminus b_{p_1,q_1}^{\alpha}(\tilde{\nabla})$. 
Using \link{eq:norm-eq} this contradicts $b_{p_0,q_0}^{\alpha+\gamma}(\nabla) \hookrightarrow b_{p_1,q_1}^{\alpha}(\nabla)$ which completes the proof for this case.

Now let the index sets be infinite.
Then, without loss of generality, we may assume that
\begin{equation*}
	\tilde{\nabla}_j 
	= \begin{cases} 
	 	 \Z^d\times\{1\},& \quad j=0,\\
  		 \Z^d\times\{1,\ldots,2^d-1\},& \quad j\in\N,
  	 \end{cases}
\end{equation*}
such that each sequence $\tilde{\bm{a}}$ in the resulting spaces $b_{p,q}^{\alpha}(\tilde{\nabla})$ can be identified with a function $\tilde{f}\in B^\alpha_{p,q}(\R^d)$ by means of the isomorphism constructed in \cite[Thm.~1.64]{T06}. Therein the Besov spaces $B^\alpha_{p,q}(\R^d)$ are defined via harmonic analysis and the mapping $\tilde{\bm{a}} \leftrightarrow \tilde{f}$ is accomplished by the use of Daubechies wavelets (which can be chosen as regular as we want in order to cover arbitrary parameter constellations). Moreover, we conclude
\begin{equation*}
	\norm{\bm{a}\sep b^{\alpha}_{p,q}(\nabla)} \sim \norm{\tilde{\bm{a}} \sep b^{\alpha}_{p,q}(\tilde{\nabla})} \sim \norm{\tilde{f} \sep B^{\alpha}_{p,q}(\R^d)}
\end{equation*}
such that our claim in this case can be derived from corresponding embeddings of classical Besov (function) spaces $B^\alpha_{p,q}(\R^d)$ which are well-known in the literature.
Indeed, at the level of function spaces, sufficiency (and partially also necessity) of our conditions has been proven, e.g., in \cite[Lem.~3]{HanSic2011}.
To show the remaining ``only if'' parts we again construct counter examples at the level of sequence spaces:
If $p_0>p_1$ then there certainly exists $\bm{x}:=(x_\lambda)_{\lambda\in\tilde{\nabla}_0} \in \ell_{p_0}(\tilde{\nabla}_0)\setminus\ell_{p_1}(\tilde{\nabla}_0)$.
Thus the sequence $\tilde{\bm{a}}^*$ defined by $\tilde{a}^*_{(0,\lambda)}:= x_{\lambda}$, $\lambda\in\tilde{\nabla}_0$, and zero otherwise, belongs to 
$b_{p_0,q_0}^{\alpha+\gamma}(\tilde{\nabla}) \setminus b_{p_1,q_1}^{\alpha}(\tilde{\nabla})$. Hence, $p_0\leq p_1$ is necessary.
It remains to check that for $0\leq \gamma < d(1/p_0-1/p_1)$ with $p_0\leq p_1$ the embedding is violated, too. 
For this purpose, we select one $\lambda^*$ in each set $\tilde{\nabla}_j$, $j\in\N_0$, and define $\tilde{\bm{a}}^*:=( \tilde{a}^*_{(j,\lambda)} )_{(j,\lambda)\in\tilde{\nabla}}$ by
\begin{equation*}
	\tilde{a}^*_{(j,\lambda)} := 2^{-j(\alpha+\gamma+d[1/2-1/p_0])} (1+j)^{-2/q_0}
	\qquad \text{for} \quad j\in\N_0 \quad \text{and} \quad \lambda=\lambda^*,
\end{equation*}
and zero otherwise.
Then it can be checked that again $\tilde{\bm{a}}^* \in b_{p_0,q_0}^{\alpha+\gamma}(\tilde{\nabla}) \setminus b_{p_1,q_1}^{\alpha}(\tilde{\nabla})$ which completes the proof.
\end{proof}

It remains to deduce the properties of the three specific wavelet constructions $\Psi_{\mathrm{DS}}$, $\Psi_{\mathrm{HS}}$, and $\Psi_{\mathrm{CTU}}$ stated in \autoref{lem:cancel}.
\begin{proof}[Proof (of \autoref{lem:cancel})]
Since the support conditions \link{P1} and \link{P2} directly follow from the method which defines the wavelet systems, we are left with showing the cancellation-type property \link{P3}. We split its proof into several steps according to the different wavelet constructions under consideration.

\emph{Step 1.}
First of all we deal with the case of (original) composite wavelets $\Psi=\Psi_{\mathrm{DS}}$ as constucted in \cite{DS99} and follow the ideas indicated in \cite[Sect.~4.7]{DS99}.
In order to improve transparency, for this step we stick to the (matrix-vector) notation used therein.
In particular, by $\Phi_j,\tilde{\Phi}_j$ we denote the vectors of primal and dual scaling functions at level $j$ on $\Gamma$. 
For $j\in\N_0$ we make use of the projectors
\begin{equation*}
	Q_j f := \distr{f}{\tilde{\Phi}_j}\Phi_j
	\quad \text{and} \quad
	P_j^\Gamma f := \distr{f}{\tilde{\Lambda}_j}\Phi_j,
\end{equation*}
defined in \cite[Sect.~4.6]{DS99}, which map $L_2(\Gamma)$ and $\mathcal{C}(\Gamma)$ onto $S(\Phi_j)\subset \mathcal{C}(\Gamma)$, respectively.
Here $\tilde{\Lambda}_j$ denotes a vector of certain functionals such that $P_j^\Gamma$ can be represented patchwise as
\begin{equation}\label{eq:P-patchwise}
	(P_j^\Gamma f)\big|_{\Gamma_i} = (P_j^\square(f\circ\kappa_i))\circ\kappa_i^{-1}, 
	\qquad i=1,\ldots,N,
\end{equation}
where $P_j^\square$ are projectors acting on functions on the unit cube $[0,1]^d$; see Section~4.6 and 3.2 in~\cite{DS99} for details.
Denoting the identity by $\Id$ we then have
\begin{equation*}
	Q_j P_j^\Gamma = P_j^\Gamma
	\qquad \text{and} \qquad
	(\Id - Q_j)(\Id-P_j^\Gamma) = \Id - Q_j,
\end{equation*}
because of the duality of $\Phi_j$ and $\tilde{\Phi}_j$.
Clearly $\tilde{\psi}^\Gamma_{j,\xi}\in S(\tilde{\Psi}^\Gamma_j)\,\bot\, S(\Phi_j) \ni Q_j(f)$ such that
\begin{align*}
	\abs{\distr{f}{\tilde{\psi}^\Gamma_{j,\xi}}}
	= \abs{\distr{(\Id-Q_j)f}{\tilde{\psi}^\Gamma_{j,\xi}}}
	= \abs{\distr{(\Id-Q_j)(\Id-P_j^\Gamma)f}{\tilde{\psi}^\Gamma_{j,\xi}}},
\end{align*}
where $\tilde{\Psi}^\Gamma_j=(\tilde{\psi}^\Gamma_{j,\xi})_{\xi\in\nabla_j^\Gamma}$ denotes the vector of dual wavelets at level $j\in\N_0$ on $\Gamma$.
Since the operators $Q_j$ are uniformly bounded on $L_2(\Gamma)$ (cf.\ \cite[Rem.~4.6.1]{DS99}) so is $\Id-Q_j$ and thus the normalization of $\tilde{\psi}^\Gamma_{j,\xi}$ in $L_2(\Gamma)$ gives
\begin{align*}
	\abs{\distr{f}{\tilde{\psi}^\Gamma_{j,\xi}}}
	&\lesssim \norm{(\Id-Q_j)(\Id-P_j^\Gamma)f \sep L_2(\supp \tilde{\psi}^\Gamma_{j,\xi})} \, \norm{\tilde{\psi}^\Gamma_{j,\xi} \sep L_2(\supp \tilde{\psi}^\Gamma_{j,\xi})} \\
	&\lesssim \norm{(\Id-P_j^\Gamma)f \sep L_2(\supp \tilde{\psi}^\Gamma_{j,\xi})} \\
	&\lesssim \sum_{i=1}^N \norm{(\Id-P_j^\Gamma)f \sep L_2(\supp \tilde{\psi}^\Gamma_{j,\xi} \cap \Gamma_i)}.
\end{align*}
Eq.~\link{eq:P-patchwise}, i.e., the patchwise representation of $P_j^\Gamma$, now implies that for every $i=1,\ldots,N$
\begin{align}\label{ineq:Id-P}
	&\norm{(\Id-P_j^\Gamma)f \sep L_2(\supp \tilde{\psi}^\Gamma_{j,\xi} \cap \Gamma_i)} \\
	&\quad \leq \norm{f - p_i\circ\kappa_i^{-1} \sep L_2(\supp \tilde{\psi}^\Gamma_{j,\xi} \cap \Gamma_i)} + 
		\norm{(P_j^\square(f\circ\kappa_i)-p_i)\circ\kappa_i^{-1} \sep L_2(\supp \tilde{\psi}^\Gamma_{j,\xi} \cap \Gamma_i)},\nonumber
\end{align}
where the $p_i$ denote arbitrarily chosen polynomials on $[0,1]^d$.

Let us recall that, by construction, the operators $P_j^\square$, given by
\begin{equation*}
	g\mapsto P_j^\square g := \distr{g}{\tilde{\Lambda}_j^\square}_{L_2([0,1]^d)}\Theta_j^\square,
\end{equation*}
are projections onto $S(\Theta_j^\square)$. Those spaces are strongly related to tensor products of shifts and dilates $\theta_{j,k}$ of (boundary adapted) univariate $D$th-order cardinal B-splines $\theta:={_{D}}{\theta}$, where $D:=D^{\Psi}\in\N$. 
Thus $S(\Theta_j^\square)$ contains the space of all polynomials of total degree less than $D^{\Psi}$ on the unit cube $[0,1]^d$; again see \cite[Sect.~3.2]{DS99}.
The vectors $\tilde{\Lambda}_j^\square$ consist of functionals $\tilde{\lambda}_{j,k}^\square$ which are tensor products of $L_2([0,1])$-inner products (with the duals $\tilde{\theta}_{j,k}$ of $\theta_{j,k}$) and (scaled) point evaluations at the boundary of the interval. 
Remember that it suffices to assume that $g\in H^s([0,1]^d)$ with $s>d/2$ in order to ensure that sampling of the function $g$ at points on the boundary of the unit cube is well-defined (in this case we find a continuous representer of $g$ by means of Sobolev's embedding theorem).

Hence, if we restrict ourselves to polynomials $p_i$ of degree smaller than $D^{\Psi}$ then $P_j^\square p_i = p_i$ and \link{ineq:Id-P} can be rewritten as
\begin{align}
	&\norm{(\Id-P_j^\Gamma)f \sep L_2(\supp \tilde{\psi}^\Gamma_{j,\xi} \cap \Gamma_i)} \nonumber\\
	&\qquad \leq \norm{g_i\circ\kappa_i^{-1} \sep L_2(\supp \tilde{\psi}^\Gamma_{j,\xi} \cap \Gamma_i)} + 
		\norm{P_j^\square g_i\circ\kappa_i^{-1} \sep L_2(\supp \tilde{\psi}^\Gamma_{j,\xi} \cap \Gamma_i)}, \label{two_summands}
\end{align}
where we set $g_i:=f\circ\kappa_i - p_i$.
In order to bound the second summand in \link{two_summands} we define the index sets
\begin{equation*}
	I_{j,\xi}^{\, i} 
	:= \left\{ k \sep \supp \theta_{j,k}^\square \cap \kappa_i^{-1}(\supp \tilde{\psi}^\Gamma_{j,\xi} \cap \Gamma_i) \neq \emptyset \right\}, \qquad i=1,\ldots,N,
\end{equation*}
for all tensor products $\theta_{j,k}^\square$ in $\Theta_j^\square$ whose support hit the set $\kappa_i^{-1}(\supp \tilde{\psi}^\Gamma_{j,\xi} \cap \Gamma_i)$ in $[0,1]^d$. Due to the local support of the $\theta_{j,k}^\square$ the cardinality of this index sets is uniformly bounded in $j$, $\xi$, and~$i$.
Therefore we conclude that
\begin{align*}
	\norm{P_j^\square g_i\circ\kappa_i^{-1} \sep L_2(\supp \tilde{\psi}^\Gamma_{j,\xi} \cap \Gamma_i)}
	&= \norm{P_j^\square g_i \sep L_2(\kappa_i^{-1}(\supp \tilde{\psi}^\Gamma_{j,\xi} \cap \Gamma_i))} \\
	&= \norm{\distr{g_i}{\tilde{\Lambda}_j^\square}_{L_2([0,1]^d)}\Theta_j^\square \sep L_2(\kappa_i^{-1}(\supp \tilde{\psi}^\Gamma_{j,\xi} \cap \Gamma_i))}
\end{align*}	
can be estimated from above by 
\begin{align*}
	 \norm{ \sum_{k\in I_{j,\xi}^{\, i}} \distr{g_i}{\tilde{\lambda}_{j,k}^\square}_{L_2([0,1]^d)}\theta_{j,k}^\square \sep L_2([0,1]^d) }
	\lesssim \max_{k\in I_{j,\xi}^{\, i}} \abs{\distr{g_i}{\tilde{\lambda}_{j,k}^\square}_{L_2([0,1]^d)}} \norm{\theta_{j,k}^\square \sep L_2([0,1]^d) }.
\end{align*}
Using the normalization of $\theta_{j,k}^\square$, as well as the bound on $\tilde{\lambda}_{j,k}^\square(g_i):=\distr{g_i}{\tilde{\lambda}_{j,k}^\square}_{L_2([0,1]^d)}$ stated in \cite[Ineq.~(3.2.5)]{DS99}, we obtain
\begin{align*}
	\norm{P_j^\square g_i\circ\kappa_i^{-1} \sep L_2(\supp \tilde{\psi}^\Gamma_{j,\xi} \cap \Gamma_i)}
	&\lesssim 2^{-j d/2} \max_{k\in I_{j,\xi}^{\, i}} \norm{g_i \sep L_\infty(\supp \tilde{\theta}_{j,k}^\square)} \\
	&\leq 2^{-j d/2} \norm{g_i \sep L_\infty(\tilde{C}^{\,i}_{j,\xi})},
\end{align*}
where $\tilde{C}^{\,i}_{j,\xi}$ denotes a cube in $[0,1]^d$ that contains $\kappa_i^{-1}(\supp \tilde{\psi}^\Gamma_{j,\xi} \cap \Gamma_i) \cup ( \bigcup_{k\in I_{j,\xi}^{\, i}} \supp \tilde{\theta}_{j,k}^\square )$.
Since, by construction, we have $\diam(\supp \theta_{j,k}^\square \cup \supp \tilde{\theta}_{j,k}^\square)\lesssim 2^{-j}$, this cube can be chosen such that $\abs{ \tilde{C}^{\,i}_{j,\xi} } \lesssim 2^{-jd}$.

Moreover, our choice of $\tilde{C}^{\,i}_{j,\xi}$ allows to bound the first summand in \link{two_summands} by the same quantity:
\begin{align}
	\norm{g_i\circ\kappa_i^{-1} \sep L_2(\supp \tilde{\psi}^\Gamma_{j,\xi} \cap \Gamma_i)}
	&\leq \norm{g_i \sep L_2(\tilde{C}^{\,i}_{j,\xi})} \nonumber\\
	&\leq \abs{\tilde{C}^{\,i}_{j,\xi}}^{1/2} \cdot \norm{g_i \sep L_\infty(\tilde{C}^{\,i}_{j,\xi})}\nonumber\\
	&\lesssim 2^{-j d/2} \norm{g_i \sep L_\infty(\tilde{C}^{\,i}_{j,\xi})}. \label{est:L2Linfty}
\end{align}

In conclusion, for $(f\circ\kappa_i)\in H^s(\tilde{C}^{\,i}_{j,\xi})$ with $s>d/2$, we established the bound
\begin{equation}\label{est:Linfty}
	\abs{\distr{f}{\tilde{\psi}_{j,\xi}}}
	\lesssim \sum_{i=1}^N 2^{-j d/2} \norm{g_i \sep L_\infty(\tilde{C}^{\,i}_{j,\xi})}
	= \sum_{i=1}^N 2^{-j d/2} \norm{f\circ\kappa_i - p_i \sep L_\infty(\tilde{C}^{\,i}_{j,\xi})},
\end{equation}
where the $p_i$ are arbitrary polynomials of degree less than $D^{\Psi}$ on $[0,1]^d$.

In order to show the desired estimate \link{patchwise_bound} we finally apply Whitney's bound which (adapted to our needs) takes the form
\begin{equation*}
	\inf_{p \in \Pi_{\ceil{s}}(\Omega)} \norm{F-p \sep L_\infty(\Omega)}
	\lesssim \abs{\Omega}^{s/d-1/2} \, \abs{F}_{H^s(\Omega)}, 
	\qquad s \geq  d/2.
\end{equation*}
Therein $\Omega$ denotes some cube in $\R^d$ with volume $\abs{\Omega}$, $\Pi_{\ceil{s}}(\Omega)$ is the space of polynomials of degree less than $\ceil{s}$ (the smallest integer larger or equal to $s$), and $ \abs{F}_{H^s(\Omega)}$ is the usual semi-norm of $F$ in $H^s(\Omega)$.
Accordingly,
\begin{equation*}
	\abs{\distr{f}{\tilde{\psi}_{j,\xi}}}
	\lesssim \sum_{i=1}^N 2^{-j d/2} 2^{-j d (s/d-1/2)} \, \abs{f\circ\kappa_i}_{H^s(\tilde{C}^{\,i}_{j,\xi})},
	\qquad d/2 < s \leq D^{\Psi},
\end{equation*}
which proves the first assertion claimed in \link{P3} for $\Psi=\Psi_{\mathrm{DS}}$.

The proof of the second part of \link{P3} is obtained by exactly the same arguments, where every primal quantity (denoted without tilde) is replaced by its dual analogue (with tilde) and vice versa.

\emph{Step 2.} We turn to modified composite wavelets $\Psi=\Psi_{\mathrm{HS}}$ as constructed in \cite{HS06}. 
In this case we can make use of the fact that the primal and the dual wavelets satisfy patchwise cancellation properties of order $\tilde{D}^{\Psi}$ and $D^{\Psi}$, respectively. 
That is, we have
\begin{equation}\label{patchwise_cancel}
	\distr{\psi_{j,\xi}^{\Gamma} \circ \kappa_i}{\, p_i}_{L_2([0,1]^d)}=0
	\qquad \text{and} \qquad
	\distr{\tilde{\psi}_{j,\xi}^{\Gamma} \circ \kappa_i}{\, p_i}_{L_2([0,1]^d)}=0, \qquad i=1,\ldots,N,
\end{equation}
for all $j$ and $\xi$, as well as every polynomial $p_i$ of degree less than $\tilde{D}^{\Psi}$ resp. $D^{\Psi}$ on $[0,1]^d$.
For the primal wavelets this has been shown in \cite[Prop.~4.1]{HS06}, whereas the assertion at the dual side simply follows from biorthogonality of the dual wavelets with the primal scaling functions (which are exact of order $D^{\Psi}$).

Now the derivation of the first part of \link{P3} for $\Psi=\Psi_{\mathrm{HS}}$ is straightforward. 
From the definition of the inner product $\distr{\cdot}{\cdot}$ and the patchwise cancellation property \link{patchwise_cancel} we deduce
\begin{align*}
	\abs{\distr{f}{\tilde{\psi}^\Gamma_{j,\xi}}}
	\leq \sum_{i=1}^N \abs{\distr{\tilde{\psi}_{j,\xi}^\Gamma\circ\kappa_i}{f\circ \kappa_i}_{L_2(\widetilde{C}_{j,\xi}^i)}}
	= \sum_{i=1}^N \abs{\distr{\tilde{\psi}_{j,\xi}^\Gamma\circ\kappa_i}{f\circ \kappa_i-p_i}_{L_2(\widetilde{C}_{j,\xi}^i)}}
\end{align*}
for all $p_i \in \Pi_{D^{\Psi}}(\widetilde{C}_{j,\xi}^i)$, where the cubes $\widetilde{C}_{j,\xi}^i \subset [0,1]^d$, $i=1,\ldots,N$, can be chosen according to the requirements in the statement of the lemma.
Again we set $g_i := f\circ\kappa_i-p_i$, apply Cauchy-Schwarz, and use the $L_2(\Gamma)$-normalization of the dual wavelets to obtain
\begin{align*}
	\abs{\distr{\tilde{\psi}_{j,\xi}^\Gamma\circ\kappa_i}{f\circ \kappa_i-p_i}_{L_2(\widetilde{C}_{j,\xi}^i)}}
	\leq \norm{g_i \sep L_2(\widetilde{C}_{j,\xi}^i)}
	\lesssim 2^{-j d/2} \norm{g_i \sep L_\infty(\widetilde{C}_{j,\xi}^i)},
\end{align*}
where the second estimate is derived as in \link{est:L2Linfty}. Hence, again we have shown \link{est:Linfty} and (as in the previous step) the bound \link{patchwise_bound} is implied by Whitney's estimate.

Since the proof for the primal wavelets is obtained in the same way, we have shown the claim also for this case.

\emph{Step 3.}
The proof of \link{P3} for the construction $\Psi=\Psi_{\mathrm{CTU}}$ given in \cite{CTU99} can be performed literally as in the previous step. The needed patchwise cancellation property \link{patchwise_cancel} for the primal wavelets can be found as Formula (3.12) in \cite[Section 3.4.1]{CTU00}.
\end{proof}

\section*{Acknowledgements}
\addcontentsline{toc}{section}{Acknowledgments}
The author likes to thank S.~Dahlke and H.~Harbrecht for several valuable discussions.

\addcontentsline{toc}{chapter}{References}
\small

\end{document}